\documentclass[11pt, a4wide]{amsart}  

\usepackage{graphicx}
\usepackage{amssymb}
\usepackage{amsmath}
\usepackage{tikz-cd}

\hfuzz 1pt
\vfuzz 1pt
\newcommand{\ts}{\hspace{0.5pt}}



\newcommand{\euler}{\mathrm{e}}
\newcommand{\imi}{\mathrm{i}}


\newcommand{\CC}{\mathbb{C}\ts}
\newcommand{\RR}{\mathbb{R}\ts}

\newcommand{\ZZ}{\mathbb{Z}}
\newcommand{\NN}{\mathbb{N}}
\newcommand{\TT}{\mathbb{T}}

\newcommand{\oplam}{\mbox{\Large $\curlywedge$}}

\newcommand{\LL}{\mathcal{L}}

\newcommand{\dd}{\,{\rm d}}


\newcommand{\MM}{\mathcal{M}(G)}
\newcommand{\MCV}{\mathcal{M}_{C,V}(G)}
\newcommand{\MTB}{\mathcal{M}^{\infty}(G)}

\newcommand{\cM}{\mathcal{M}}


\newcommand{\Oomega}{(\varOmega,\alpha)}

\newcommand{\Ttheta}{(\varTheta,\beta)}

\newcommand{\LO}{L^2 (\varOmega,m)}


\newcommand{\Ghat}{\widehat{G}}

\newcommand{\supp}{\mbox{supp}}

\newcommand{\cL}{\mathcal L}
\newcommand{\cp}{(G,H,\cL)}


\newtheorem{lemma}{Lemma}
\newtheorem{theorem}[lemma]{Theorem}
\newtheorem{proposition}[lemma]{Proposition}
\newtheorem{corollary}[lemma]{Corollary}
\newtheorem{definition}[lemma]{Definition}
\newtheorem{example}[lemma]{Example}
\newtheorem{remark}[lemma]{Remark}

\numberwithin{lemma}{section}

\begin{document}


\title[]{Modulated crystals and almost periodic measures
}

\dedicatory{We dedicate this work to Michael Baake on the occasion of his $60^{th}$ birthday.}

\author{Jeong-Yup Lee}
\address{Department of Mathematics Education, Catholic Kwandong University, Gangneung, Gangwon 25601, Korea
and KIAS, 85 Hoegiro, Dongdaemun-gu, Seoul, 02455, Korea}
\email{jylee@cku.ac.kr}

\author{Daniel Lenz}
\address{Fakult\"at f\"ur Mathematik und Informatik, Friedrich Schiller Universit\"at Jena, 07737 Jena, Germany}
\email{dlenz.lenz@uni-jena.de}

\author{Christoph Richard}
\address{Department f\"{u}r Mathematik, Friedrich-Alexander-Universit\"{a}t Erlangen-N\"{u}rnberg, Cauerstrasse 11, 91058 Erlangen, Germany}
\email{christoph.richard@fau.de}

\author{Bernd Sing}
\address{Department of Mathematics, University of the West Indies, Cave Hill, P.O.\ Box 64, Bridgetown, BB11000, Barbados} 

\email{bernd.sing@cavehill.uwi.edu}

\author{Nicolae Strungaru}
\address{Department of Mathematical Sciences, MacEwan University,
10700 “ 104 Avenue, Edmonton, AB, T5J 4S2,
and Institute of Mathematics ``Simon Stoilow'', Bucharest, Romania}
\email{strungarun@macewan.ca}

\renewcommand{\thefootnote}{\fnsymbol{footnote}} 
\footnotetext{\emph{Key words:} diffraction, modulated crystal, model set, cut-and-project scheme, almost periodic measure}
\footnotetext{\emph{MSC 2010 classification: 52C23, 37A25, 37B10, 37B50}}     
\renewcommand{\thefootnote}{\arabic{footnote}}


\maketitle

\begin{abstract}
Modulated crystals and quasicrystals can simultaneously be described as modulated quasicrystals, a class of point sets introduced by de Bruijn in 1987. With appropriate modulation functions, modulated quasicrystals themselves constitute a substantial subclass of strongly almost periodic point measures. We re-analyse these structures using methods from modern mathematical diffraction theory, thereby providing a coherent view over that class. Similarly to de Bruijn's analysis, we find stability with respect to almost periodic modulations.
\end{abstract}

\maketitle

\section{Introduction}

In 1964, Brouns et al.~performed an X-ray diffraction experiment on a  washing soda crystal, which was expected to behave like a perfect crystal. Surprisingly, a slight anomaly in its diffraction pattern was found \cite{BVW64}. It was later suggested to describe such anomalies by modulated crystals \cite{W74,JJ77}. Over the years, the crystallography of such structures has been worked out and has found many applications including protein crystallography \cite{Lea07}. Numerous important contributions to the field have been made by Ted Janssen \cite{Sou18}.
Recent overviews of modulated structures and of other types of aperiodic crystals are Janssen \cite{J12} and Janner and Janssen \cite{JJ14}. See also the monographs by Janssen et al. \cite{JCB07} and by van Smaalen \cite{vS07} for a detailed discussion of aperiodic crystals  from physical and crystallographic perspectives.

\smallskip

On the mathematical side, Bombieri and Taylor \cite{BT87} suggested studying modulated lattices in 1985, as they appeared to have properties similar to quasicrystals, a fundamentally different type of aperiodic crystal that had been discovered shortly before. Models for quasicrystals were obtained by the so-called cut-and-project construction \cite{KN,DK}. De Bruijn introduced modulated quasicrystals in Euclidean space \cite{dB87}, a class of point sets that is nowadays subsumed by so-called deformed weighted model sets. Whereas this class comprises weighted cut-and-project sets, de Bruijn showed that it also includes certain modulated lattices \cite[Sec.~5,6]{dB87}.
De Bruijn's Fourier analysis relied on a particular class of smooth weight functions of unbounded support, which cannot readily be extended beyond Euclidean space.

\smallskip

Deformed weighted model sets have further been studied by Hof \cite{Hof3} and by Bernuau and Duneau \cite{BD}, who coined the name.
In the meantime, it was realized that cut-and-projects sets are in fact model sets as introduced earlier by Meyer \cite{Meyer,Moody-old,Moody-model sets}. Also Euclidean space was generalized to a locally compact abelian group, and dynamical systems techniques were introduced as a powerful tool \cite{Ro96,Martin2}. In that setting, deformed weighted model sets (in fact much more generally deformed weighted Delone sets) were analyzed by Baake and Lenz \cite{BL2}. Modulated structures were studied by Sing \cite{SW06,WS07,S08}, by reconstructing the internal space of the underlying deformed model set from the diffraction pattern of the modulated structure. Let us stress that there is no universally agreed definition of modulation or deformation at present.

\smallskip

As mathematical diffraction theory has now reached a certain maturity, see e.g.~\cite{BG2,MoSt,LenStr16,RS17} for recent expositions of various parts, it seems appropriate to re-analyze modulated crystals within that general framework. Whereas mean almost periodicity of the underlying point set emerges to be characteristic of pure point diffraction in general, modulated crystals are examples of more restrictive strongly almost periodic point sets. For that reason, we adopt the setting of strongly almost periodic (SAP) measures on locally compact abelian groups in our article \cite{GdeL,MoSt}. On a conceptual level, diffraction has recently been studied for more general weakly almost periodic measures \cite{LenStr16}. In particular it is known that such measures are pure point diffractive, as their autocorrelation is strongly almost periodic \cite[Thm.~7.5(a)]{LenStr16}.
 Also, their diffraction amplitudes can be computed as the squared modulus of their Fourier--Bohr coefficients \cite[Thm.~7.5(b)]{LenStr16}.
 
 \smallskip

Although our motivation is modulations of lattices and of general ideal crystals, we will profit from the considerably more general setting of deformed weighted model sets.
Hence in this article, we discuss  deformed weighted model sets as a substantial subclass of SAP measures.
We will prove that deformed weighted model sets with continuous and compactly supported weight and deformation functions are strongly almost periodic, thereby extending older results for weighted model sets \cite{BM,LR,Ric,NS11,NS12}.
We will then deform such structures using continuous almost periodic modulations.
We will prove that such modulations of deformed weighted model sets stay within the class of deformed weighted model sets, which is in line with de Bruijn's analysis~\cite{dB87}.
We will also show that, in Euclidean space, the class of deformed weighted model sets in fact coincides with the class of modulated weighted model sets. In retrospect, this justifies de Bruijn's terminology. Our diffraction analysis relies on dynamical systems techniques based on the so-called torus parametrization \cite{BHP,Ro96,Martin2,LR,KR,KR17}. In particular, it yields short alternative proofs of previous results.

\smallskip

Our examples also shed some light on Lagarias' question  \cite{Lag}: Which Delone Dirac combs are strongly almost periodic? Whereas finite local complexity enforces crystallinity \cite[Cor.~5.6]{KL13}, deformed weighted model sets provide many examples of infinite local complexity, e.g.~almost periodic modulations of lattices or of ideal crystals.

\smallskip

Let us briefly mention two lines of problems within that context.
It would be interesting to classify SAP measures. But at present we do not even know of a simple characterization of SAP Dirac combs. Note that such combs need not be modulated crystals: think for example of the incommensurate structure $\mathbb Z \cup \alpha\mathbb Z$ where $\alpha$ is irrational \cite[Ex.~9.6]{BG2}. Whereas this point set is not uniformly discrete, it is possible to define uniformly discrete variants \cite[Ex.~9.7]{BG2} and \cite[Ch.~4]{vS07}. There are other examples on the line, arising as factors of the Kronecker flow on the two-dimensional torus \cite[Sec.~6]{KL13}, which fail to be modulations of the integer lattice.

Another aspect concerns modulation functions beyond almost periodic ones. Consider  the deterministic displacement model $\{n+\varepsilon\cdot \mathrm{frac}(\alpha n): n\in\mathbb Z\}$ for $\alpha\notin \mathbb Q$, see \cite[Ex.~9.8]{BG2} or \cite[Sec.~6]{dB87}, where $\mathrm{frac}(x)$ denotes the fractional part of $x$. This model is not strongly almost periodic, but it is a model set \cite[Ex.~9.8]{BG2}.
It is tempting to ask whether there is some different type of almost periodicity to cover such examples. A candidate is almost automorphicity, see e.g.~\cite{V65} and \cite[Thm.~1]{KR}. More generally, the above problem might be analyzed using functions with prescribed continuity properties on the Bohr compactification \cite{Reich70}. The latter problem may of course also be analyzed from a dynamical perspective. Consider the hull of a point set, i.e., its translation orbit closure in a Hausdorff-type metric. The hull of any SAP point set is a group, see Lemma~\ref{lem:sap}.   One may now ask for which deformation functions  the hull is sufficiently close to a group in order to retain pure point diffractivity.

\smallskip

Here is the structure of the article. As a motivating example, we discuss the diffraction of sine modulated integers. The succeeding general analysis uses mathematical diffraction theory and model sets in general locally compact abelian groups. Section~\ref{Sect} gives basic definitions and background on dynamical systems and diffraction theory. 
In Section~\ref{Strongly}, we treat dynamical and spectral properties of general SAP measures, from which we derive their diffraction. Although these results follow from the weakly almost periodic case, we decided to give specialized proofs for a self-contained presentation. Section~\ref{cpmodel} provides necessary background about cut-and-project schemes and model sets. Section~\ref{sec:lmd} is devoted to modulations and deformations of lattices, which extend our motivating example. The examples of Section~\ref{sec:lmd} are in fact deformed weighted model sets, which are shown to be  strongly almost periodic in Section~\ref{Smoothly}. 
Modulations of deformed weighted model sets are analyzed in
Section~\ref{sec:MDMS}. It is shown that the class of deformed weighted model sets is stable under modulation, and that in Euclidean space that class coincides with the class of modulated weighted model sets.
The final Section~\ref{The case of ideal crystals} treats modulations of ideal crystals, a class which comprises many examples from conventional crystallography. Hence our results in that section extend the analysis from the lattice case and specialize those for deformed model sets. In the Appendix, we discuss almost periodic functions taking values in a topological group.

\section{Sine modulated integers}\label{sec:sinemod}

\subsection{Overview}

Consider $f:\mathbb Z\to \mathbb R$ given by $f(\ell)=\varepsilon\cdot \sin(2\pi\alpha \ell)$ with $\varepsilon$ positive and $\alpha$ irrational. The point set $\Lambda=\{\ell+f(\ell): \ell\in\mathbb Z\}$ is called \textit{sine modulated integers}. It is a simple example of a modulated crystal \cite[Sec.~3.3]{JJ14}.%
\footnote{If $\alpha$ is rational, then the modulation function $f$ is periodic, and $\Lambda$ is a so-called ideal crystal. Such structures do not differ much from a lattice and will be discussed in Section~ \ref{The case of ideal crystals}, see especially Remark~\ref{rem:comm}.}
Since $\alpha$ is irrational, the point set $\{f(\ell): \ell\in\mathbb Z\}$ is dense in $[-\varepsilon,\varepsilon]$. This holds as the point set $\{\alpha \ell \mod 1: \ell\in\mathbb Z\}$ is dense in $[0,1]$, a well-known fact that may be regarded as a consequence of Weyl's equidistribution theorem, see e.g.~\cite[Ch.~1, Ex.~2.1]{KN74}. In fact $f$ is an almost periodic function, see the Appendix for background.

\medskip

For the distance $d_\ell$  between points arising from two consecutive integers $\ell$ and $\ell+1$ we compute
\begin{displaymath}
\begin{split}
d_\ell&= (\ell+1+f(\ell+1))-(\ell+f(\ell))=
1+\varepsilon\cdot \sin(2\pi\alpha(\ell+1))-\varepsilon\cdot \sin(2\pi\alpha \ell)\\
&= 1+2\varepsilon\cdot \cos(2\pi\alpha(\ell+1/2))\sin(\pi\alpha) \ .
\end{split}
\end{displaymath}
We thus have $\overline{\{d_\ell: \ell\in \mathbb Z\}}=[1-2\varepsilon \cdot |\sin(\pi\alpha)|, 1+2\varepsilon \cdot |\sin(\pi\alpha)|]$. In particular for $\varepsilon <1/2$ the set $\Lambda$ is uniformly discrete, and we may label the points $\ell+f(\ell)$ in ascending order by $\ell$. If $\varepsilon\ge 1/2$, then $\Lambda$ fails to be uniformly discrete. In either case $\Lambda$ has infinite local complexity, i.e., $\Lambda-\Lambda$ is not uniformly discrete.

\smallskip

 There is a topological dynamical system $\Omega$ with shift action naturally assigned to $\Lambda$, which is called its hull. It will be constructed in Section~\ref{sec:hull}.
 The hull contains all translates of $\Lambda$ and certain additional point sets which, loosely spoken, do not differ much from the translates. 
 The relation between the dynamical spectrum of $\Omega$ and the diffraction spectrum of $\Lambda$ allows us to compute its diffraction explicitly in Section~\ref{sec:diffsmi}. In fact any point set of the hull has the same diffraction.
This dynamical approach is complementary to the usual one of computing the Fourier--Bohr coefficients by a limiting procedure. It is reviewed in Section~\ref{difftheo} in a general setting.
 
\smallskip

There is a simple parametrization of the hull of sine modulated integers by a two-dimensional torus $\mathbb T$ with induced shift action. We call the corresponding factor map $\mu:\mathbb T\to \Omega$ the torus parametrization%
\footnote{In the context of model sets, the name was coined in \cite{BHP} for a suitable inverse of $\mu$, see also \cite{Ro96} and \cite{Martin2}. For a historical discussion and a recent dynamical approach see \cite{KR,KR17}.} 
of $\Omega$. Since the induced shift action on $\mathbb T$ is a dense rotation on the compact group $\mathbb T$, one can conclude that sine modulated integers are pure point diffractive \cite[Sec.~3]{BL2}. 
If $\varepsilon<1/2$, then the factor map is a homeomorphism, which implies that sine modulations are strongly almost periodic by Lemma~\ref{lem:sap}. In Section~\ref{sec:lmd} we will prove by a different method that general sine modulated integers are strongly almost periodic and hence pure point diffractive, if one properly takes into account multiplicities, compare Remark~\ref{rem:mult}.

\smallskip

Before we do the calculations, let us describe the diffraction of sine modulated integers for $\varepsilon<1/2$.
The Bragg peak positions lie dense in Fourier space and are explicitly given by $\mathbb Z[\alpha]=\{m+\alpha n:m,n\in \mathbb Z\}$. They have positive intensities
\begin{equation}\label{eq:amp}
|a_{m,n}|^2=\left|\int_0^1 \euler^{2\pi \imi (ns+(m+\alpha n)\,\varepsilon\cdot\sin(2\pi s))} {\rm d}s\right|^2\,.
\end{equation}
The diffraction of sine modulated integers is depicted in Figure~\ref{fig:diff} for $\varepsilon=1/20$ and $\alpha=1/\tau^4$, where $\tau$ denotes the golden mean. 

In general, for small modulation strength $\varepsilon>0$ one observes Bragg peaks of high intensity at the lattice positions, accompanied by Bragg peaks of small intensity.
If a Bragg peak of small intensity is close to some lattice Bragg peak, it is sometimes called a satellite. In particular, this is the case if the modulation length is of a different order of magnitude than the lattice
constant. 

\begin{figure}
\includegraphics[scale=0.45]{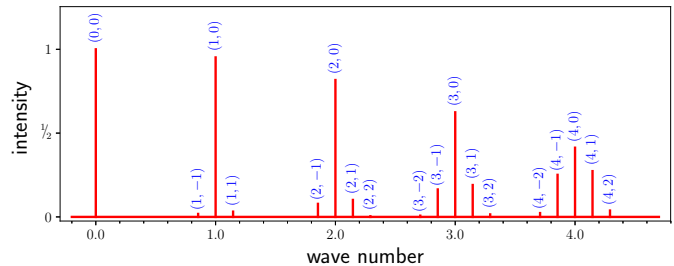}
\caption{Diffraction of sine modulated integers ($\varepsilon=1/20$, $\alpha=1/\tau^4$). Label $(m,n)$ corresponds to Bragg peak position $m+\alpha n$.}
\label{fig:diff}
\end{figure}

\subsection{The hull and its torus parametrization}\label{sec:hull}

Recall that the point set $\Lambda\subset \RR$ is uniformly discrete if and only if $\varepsilon<1/2$. In general $\Lambda$ is locally finite, i.e., discrete and closed. In fact $\Lambda$ is translation bounded, i.e., there exists finite constants $C$ and $R$ such that any intersection with a ball of radius $R$ contains at most $C$ points.

\smallskip

The hull of $\Lambda$ is the translation orbit closure of $\Lambda$ in a suitable topology, which we now describe.
Let us consider the collection $\mathcal D=\mathcal D_{C,R}$ of point sets that are translation bounded with constants $C,R$. We equip $\mathcal D$ with the \textit{local topology}, a Hausdorff type uniform topology that is frequently used when studying point set dynamical systems \cite[Sec.~4]{BL}. It is generated by the entourages
\[
U_{R, \delta} : = \{(\Lambda, \Lambda') \in \mathcal{D} \times \mathcal{D} : B_R \cap \Lambda \subset B_{\delta} + \Lambda'
\ \mbox{and} \ B_R \cap \Lambda' \subset B_{\delta} + \Lambda \}\,,
\]
where $R, \delta$ vary over the positive real numbers. Here $B_r$ denotes the centered closed ball of radius $r$.
In particular, for every $\Lambda\in\mathcal D$, the collection of all
$U_{R, \delta}(\Lambda) = \{\Lambda' \in \mathcal{D} : (\Lambda, \Lambda') \in U_{R, \delta}\}$,
where $R,\delta$ vary over the positive reals, is a neighborhood system of $\Lambda$. It turns out that $\mathcal D$ is a compact metrizable Hausdorff space \cite[Thm.~3]{BL}. In particular, we can use sequences for convergence arguments. Two point sets are close in the local topology, if they coincide on large centered balls up to small local shifts.

\smallskip

Consider the hull $\Omega = \overline{\{x + \Lambda : x \in \RR\}}\subset \mathcal D$
of $\Lambda$ in the local topology, with point set translation as natural $\RR$-action. Since $\alpha$ is irrational, it is not hard to see that $\Omega=\{\Lambda_{r,s}: (r,s)\in \RR^2\}$, where $\Lambda_{r,s}=\{r+\ell+\varphi(\alpha\ell+s):\ell\in \ZZ\}$ and $\varphi(x)=\varepsilon\cdot \sin(2\pi x)$. Due to the symmetries $\Lambda_{r,s+1}=\Lambda_{r+1,s+\alpha}=\Lambda_{r,s}$, the hull is parametrized by a two-dimensional torus. We will describe in Section~\ref{Strongly} how such a torus arises for strongly almost periodic structures in a natural way.

\smallskip

There is another natural construction of the torus, which will be central in Section~\ref{sec:lmd} below:
Consider the function space $H=\overline{\{f(\cdot+ \ell): \ell \in \ZZ\}}$, where the closure is taken using the topology of uniform convergence. It carries a natural group structure $\dot{+}$ defined via $f(\cdot + \ell) \dot{+} f(\cdot + \ell')=f(\cdot + (\ell+\ell'))$.
The LCA group $H$ is a compactification of $\ZZ$, since $f$ is almost periodic.
In fact $H=\{\varphi(s+\alpha \cdot \, ): s\in [0,1)\}$. Thus $H$ is homeomorphic to a one-dimensional torus. 
Now $\LL=\{(\ell, f(\cdot+\ell)): \ell \in \ZZ\}\subset \RR\times H$
is a group and a Delone set, i.e., $\LL$ is lattice in $\RR\times H$.
Consider $\TT=(\RR \times H) / \LL$ with natural $\RR$-action
$s+[r,g]=[s+r, g]$ and define $\mu: \TT \to \Omega$ by
\begin{displaymath}
\mu([r,g]) = r + \{\ell+g(\ell):\ell \in \ZZ\} \ .
\end{displaymath}

\smallskip

\begin{lemma}
The map $\mu: \TT\to \Omega$ is a factor map, i.e., it is continuous onto and commutes with the natural $\RR$-actions on $\TT$ and on $\Omega$.  If $\varepsilon<1/2$, then $\mu$ is a homeomorphism.
\end{lemma}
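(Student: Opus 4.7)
The plan is to establish well-definedness of $\mu$, $\RR$-equivariance, continuity, and surjectivity onto $\Omega$ in sequence, and then to derive the homeomorphism claim from injectivity under the hypothesis $\varepsilon<1/2$, using that a continuous bijection from a compact space to a Hausdorff space is automatically a homeomorphism.

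For well-definedness I would work in the parametrization $H=\{\varphi(\alpha\cdot+s):s\in\RR/\ZZ\}$, labelling $g\in H$ by $s_g$ so that $g(x)=\varphi(\alpha x+s_g)$. A direct check gives $(g\dot{+}f(\cdot+\ell))(m)=g(m+\ell)$ for all integers $m,\ell$, whence
\[
\mu\bigl([r+\ell,\,g\dot{+}f(\cdot+\ell)]\bigr)=(r+\ell)+\{m+g(m+\ell):m\in\ZZ\}=r+\{m'+g(m'):m'\in\ZZ\}=\mu([r,g]),
\]
so $\mu$ descends to $\TT$, and $\RR$-equivariance is immediate from the definitions. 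For continuity, since $\LL$ is a lattice I can lift a convergent sequence $[r_n,g_n]\to[r,g]$ in $\TT$ to representatives with $r_n\to r$ in $\RR$ and $g_n\to g$ uniformly on $\RR$. Given $R,\delta>0$, only integers $\ell$ in a bounded window contribute points of $\mu([r,g])$ to the ball $B_R$, and uniform control of $g_n-g$ on that finite set together with $|r_n-r|<\delta/2$ places $(\mu([r,g]),\mu([r_n,g_n]))$ into the entourage $U_{R,\delta}$ for all large $n$. For surjectivity, $\mu([x,f])=x+\Lambda$ shows that the image of $\mu$ contains the translation orbit of $\Lambda$; as a continuous image of the compact space $\TT$ it is closed, so it contains $\Omega$. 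Conversely, for any $[r,g]\in\TT$, density of $\alpha\ZZ$ in $\RR/\ZZ$ yields integers $\ell_n$ with $f(\cdot+\ell_n)\to g$ uniformly, whence $\mu([r,g])=\lim_n((r-\ell_n)+\Lambda)\in\Omega$ by continuity.

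The main obstacle is injectivity when $\varepsilon<1/2$. Suppose $\mu([r_1,g_1])=\mu([r_2,g_2])$. Because $\varepsilon<1/2$ each point set is uniformly discrete and the enumeration $\ell\mapsto r_i+\ell+g_i(\ell)$ is strictly increasing, so the set equality forces an integer reindexing: there is $k\in\ZZ$ with $g_1(\ell)-g_2(\ell+k)=r_2-r_1+k$ for every $\ell\in\ZZ$. Density of $\{\alpha\ell\bmod 1:\ell\in\ZZ\}$ in $\RR/\ZZ$ together with continuity extend the identity to all real $t$ in place of $\ell$. Applying the sum-to-product formula rewrites the left-hand side as $2\varepsilon\sin(\pi(s_{g_1}-s_{g_2}-\alpha k))\cos(2\pi\alpha t+c)$ for a suitable constant $c$, which can be constant in $t$ only if the sine factor vanishes; this yields $s_{g_1}\equiv s_{g_2}+\alpha k\pmod 1$ and then forces $r_1-r_2=k$. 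Consequently $(r_1,g_1)-(r_2,g_2)=(k,f(\cdot+k))\in\LL$, so $[r_1,g_1]=[r_2,g_2]$, which completes the proof.
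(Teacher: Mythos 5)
Your proof is correct, and while the overall skeleton (well-definedness, equivariance, continuity, surjectivity, then injectivity plus compact-to-Hausdorff for the homeomorphism) matches the paper's, two of your key steps take genuinely different routes. For surjectivity, the paper extracts convergent subsequences by hand: it picks translates $r_k+\Lambda$ approximating a given $\Lambda'\in\Omega$, splits $r_k$ into an integer part and a bounded part, and uses compactness of $H$ to produce a preimage. You instead observe that $\mu(\TT)$ is a compact, hence closed, subset of $\mathcal D$ containing the orbit $\{x+\Lambda\}$, so it contains its closure $\Omega$; this is slicker and transfers more readily to general settings, at the price of needing continuity established first (which you do). For injectivity, the paper normalizes $r\in[0,1)$, $s=0$, localizes $p_\ell\in(\ell-1/2,\ell+3/2)$ and $q_\ell\in(\ell-1/2,\ell+1/2)$ to pin the index shift to $k\in\{0,1\}$, and rules out $k=1$ by comparing the ranges $r+\overline{g(\ZZ)}$ and $1+\overline{h(\ZZ)}$; its only input about the modulation is $\overline{g(\ZZ)}\subset(-1/2,1/2)$. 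You instead derive the functional equation $g_1(\ell)-g_2(\ell+k)=r_2-r_1+k$ from the monotone reindexing, extend it to all of $\RR/\ZZ$ by density of $\alpha\ZZ$ and continuity, and kill the oscillating term via the sum-to-product identity. Your argument exploits the explicit sine form of $\varphi$ and so is less robust than the paper's (which would survive replacing $\sin$ by any almost periodic function with range in $(-1/2,1/2)$), but it has the merit of identifying the stabilizer of $\mu$ exactly as the lattice $\LL$ without any case distinction on $k$. Both arguments are complete; no gaps.
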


\begin{proof}

We show that $\mu$ is well-defined, onto and continuous. We then show that  $\mu$ is an $\RR$-map. We finally show that $\mu$ is one-to-one if $\varepsilon<1/2$, which implies that $\mu$ is a homeomorphism in that case since $\Omega$ is compact.

\smallskip

\noindent \textit{$\mu$ is well-defined:} Take arbitrary $(r,g)\in \mathbb R\times H$ and write $g=\lim_{k\to\infty} f(\cdot-\ell_k)$ for some integer sequence $(\ell_k)_k$.  Then $\mu([r,g])\in \Omega$ since
\begin{displaymath}
\mu([r,g]) = \{r+\ell+\lim_{k\to\infty} f(\ell-\ell_k):\ell \in \ZZ\}=\lim_{k\to\infty}\left(  r+\ell_k+ \{\ell+f(\ell):\ell \in \ZZ\}\right).
\end{displaymath}
The latter limit indeed exists in $\Omega$, as any accumulation point of $\left( r+\ell_k+ \Lambda\right)_k$ in the compact space $\Omega$ coincides with $\mu([r,g])$.
The definition is independent of the choice of representative: Let $(\ell, f(\cdot + \ell))\in \LL$ and note
\begin{eqnarray*}
\mu([r+\ell, g(\cdot) \dot{+} f(\cdot + \ell)]) & = & r+\ell+\{\ell'+g(\ell') \dot{+} f(\ell'+\ell) : \ell' \in \ZZ\} \\
& = & r+\ell+\{\ell'+ \lim_{k \to \infty} f(\ell'-\ell_k) \dot{+} f(\ell'+\ell) : \ell' \in \ZZ\} \\
& = & r+\ell+\{\ell'+ \lim_{k \to \infty} f(\ell'-\ell_k + \ell) : \ell' \in \ZZ\} \\
& = & r+\ell+\{\ell' - \ell + \lim_{k \to \infty} f(\ell'-\ell_k) : \ell' \in \ZZ\} \\
& = & r+\{\ell' + g(\ell') : \ell' \in \ZZ\} = \mu([r, g])\,.
\end{eqnarray*}

\noindent \textit{$\mu$ is onto:}
Consider any $\Lambda' \in \Omega$. For every $k \in \NN$ choose $r_k \in \RR$ such that
\[
r_k + \{\ell+ f(\ell): \ell \in \ZZ\} \in U_{k,\frac{1}{k}}(\Lambda')\,.
\]
By passing to a suitable subsequence of $(r_k)_k$, we can find integers $(\ell_{k})_k$ such that $\lim_{k \to \infty} (r_{k} - \ell_{k}) = r$ for some $r \in [0,1)$. Note that
\[
r_{k} + \{\ell + f(\ell) : \ell \in \ZZ \} = r_{k} - \ell_{k} + \{\ell' + f(\ell' - \ell_{k}) : \ell' \in \ZZ \}\,.
\]
Using compactness of $H$, we may assume that $f(\cdot -  \ell_{k})$ converges,
by passing to a suitable subsequence. Now define $g = \lim_{k \to \infty} f(\cdot - \ell_{k})$. We then have
$\Lambda' = r + \{\ell+ g(\ell): \ell \in \ZZ\}$.
Thus $[r, g]\in \TT$ satisfies $\mu([r, g]) = \Lambda'$.

\smallskip

\smallskip

\noindent \textit{$\mu$ is continuous:}  Consider an arbitrary member $\mu([r,g])$ in the hull. Fix arbitrary $R>0$ and $\varepsilon>0$ and consider $r'\in B_{\varepsilon/2}(r)\subset \RR$ and $g'\in B_{\varepsilon/2}(g)\subset H$. By the triangle inequality, we then have
\begin{displaymath}
\mu([r',g'])\subset B_\varepsilon+\mu([r,g])\,, \qquad \mu([r,g])\subset B_\varepsilon+\mu([r',g'])\,.
\end{displaymath}
In particular we have $\mu([r',g'])\in U_{R, \varepsilon}(\mu([r, g)])$. As $R>0$ and $\varepsilon>0$ were arbitrary, this shows that $\mu$ is continuous.

\smallskip

\noindent \textit{$\mu$ is an $\RR$-map:} Take any $s\in\mathbb R$ and note
\[
\mu(s + [r, g]) = \mu([s + r, g]) = s + r + \{\ell + g(\ell): \ell \in \ZZ \}= s+ \mu([r, g])\,.
\]
\smallskip

\noindent \textit{$\mu$ is one-to-one for $\varepsilon<1/2$:}
Consider $\mu([r,g])=\mu([s,h])$ and assume $r\in [0,1)$ and $s=0$  without loss of generality.  Write $\mu([r,g])=\{p_\ell:\ell\in\ZZ\}$ and  $\mu([0,h])=\{q_\ell:\ell\in\ZZ\}$, where  $p_\ell=r+\ell+g(\ell)$ and $q_\ell=\ell+h(\ell)$. Note that $p_{\ell_1}<p_{\ell_2}$ and  $q_{\ell_1}<q_{\ell_2}$ whenever $\ell_1<\ell_2$ since  $\overline{g(\ZZ)} = \overline{h(\ZZ)}\subset (-1/2,1/2)$ due to irrationality of $\alpha$, and that we have
\begin{displaymath}
p_\ell \in (\ell-1/2, \ell+3/2) \ , \qquad q_\ell \in (\ell-1/2, \ell+1/2)
\end{displaymath}
for every $\ell\in \ZZ$. In particular either $p_0=q_0$ or $p_0=q_1$.
Assuming $p_0=q_1$, we infer $p_\ell=q_{\ell+1}$ for all $\ell\in \ZZ$. Thus $r+\overline{g(\ZZ)}=1+\overline{h(\ZZ)}$, which is impossible as $r\in [0,1)$.
We thus have $p_0=q_0$, which leads to $p_\ell=q_\ell$ for all $\ell\in \ZZ$. Thus $r+\overline{g(\ZZ)}=\overline{h(\ZZ)}$, which implies $r=0$ and $g=h$. Hence $\mu$ is one-to-one. \qed
\end{proof}

\subsection{Diffraction of sine modulated integers}\label{sec:diffsmi}

We compute the diffraction measure of sine modulated integers via the dynamical systems approach.
The torus parametrization $\mu:\TT\to \Omega$ can be used to embed $L^2(\Omega)$ isometrically into $L^2(\TT)$, compare \cite[Thm.~1]{BL2}.  (For $\varepsilon<1/2$, the torus parametrization is a homeomorphism, and both spaces can be identified.) In particular, since $(\TT, \RR)$ has pure point dynamical spectrum, $(\Omega, \RR)$ has pure point dynamical spectrum too, compare \cite[Prop.~1]{BL2}. The latter property is equivalent to $(\Omega, \RR)$ having pure point diffraction \cite[Thm.~7]{BL}.

\medskip

As $(\TT, \RR)$ is a dense rotation on a compact group, the space $L^2(\TT)$ admits an orthonormal basis of eigenfunctions $e_\xi$ with eigenvalue $\xi\in \widehat \RR \cong \RR$, compare \cite{Wal}. Every such $\xi$ is a potential Bragg peak position in the diffraction of $\Lambda$. For explicit expressions, let us denote by $\Gamma$ the invariance lattice of $\Omega=\{\Lambda_{r,s}: (r,s)\in \RR^2\}$ and its dual lattice by $\Gamma_0$. We have
\begin{displaymath}
\Gamma=\left\{ {m \choose n+\alpha m}: m,n\in \ZZ \right\} \ ,\qquad
 \Gamma_0=\left\{ {-m-\alpha n \choose n}: m,n\in \ZZ \right\}  \ .
\end{displaymath}
Eigenfunctions of the translation operator on $L^2(\TT)$, where
$\TT=\RR^2/\Gamma$, arise from lattice invariant plane waves $x\mapsto \chi_k(x)=\euler^{2\pi \imi k\cdot x}$, which restricts $k$ to elements in $\Gamma_0$.
Given $k=(-m-\alpha n,n)^T$, we will write $e_\xi$ instead of $\chi_k$ for $\xi=m+\alpha n$, which will cause no confusion as $k$ can be reconstructed from $\xi$. A simple calculation shows that $e_\xi\in L^2(\TT)$ is indeed an eigenfunction of the translation operator  with eigenvalue $\xi$, and $\{e_\xi : \xi\in \ZZ[\alpha]\}$ is total and orthonormal in $L^2(\TT)$. Using the isometric embedding, we can apply the diffraction formula Eqn.~\eqref{eq:FBC1} from Section~\ref{difftheo}. The diffraction intensity at $\xi$ is given by $|a_\xi|^2$, where
\begin{displaymath}
a_\xi=\int_\TT \overline{e_\xi(t)} f_{\xi\cdot\psi}(\mu(t)) {\rm d}t \ .
\end{displaymath}
Here $\psi\in C_c(\RR)$ is any function satisfying $\int_\RR \psi(x) {\rm d}x=1$, and we have used $f_\psi(\Lambda)=\sum_{p\in\Lambda} \psi(p)$. If $\mu$ is a homeomorphism, then $a_\xi\ne0$ for all $\xi\in \ZZ[\alpha]$.

\smallskip

Let us now assume $\varepsilon<1/2$ for simplicity. In order to compute $a_\xi$ for $\xi=m+\alpha n$, we write
\begin{displaymath}
\begin{split}
a_\xi&=\int_0^1\int_0^1 \overline{e_\xi((r,s))} f_{\xi\cdot\psi}(\Lambda_{r,s}) {\rm d}(r,s)\\
&= \sum_{\ell\in \ZZ} \int_0^1 \int_0^1 \xi(-r)\euler^{2\pi \imi n s} (\xi\cdot\psi)(r+\ell+\varphi(\alpha\ell+s)){\rm d}(r,s) \ .
\end{split}
\end{displaymath}
As $\psi$ has compact support, only finitely many terms are nonzero in the above sum. Suppose now that $\psi$ is sharply concentrated about $0$. Then, up to an arbitrarily small error, the above expression simplifies to
\begin{displaymath}
\begin{split}
a_\xi&= \sum_{\ell\in \{-1,0\}} \int_0^1 \int_0^1 \xi(\ell+\varphi(\alpha\ell+s))\euler^{2\pi \imi n s} \psi(r+\ell+\varphi(\alpha\ell+s)){\rm d}(r,s) \ .
\end{split}
\end{displaymath}
Consider the term corresponding to $\ell=0$ in the above sum. We can have $r+\varphi(s)=0$ for some $r\in (0,1)$ only if $s\in (1/2,1)$. Thus, up to an arbitrarily small error, that term equals
\begin{displaymath}
\begin{split}
 \int_{1/2}^1 &\int_0^1 \xi(\varphi(s))\euler^{2\pi \imi n s} \psi(r+\varphi(s)){\rm d}(r,s)=
\int_{1/2}^1 \xi(\varphi(s))\euler^{2\pi \imi ns} {\rm d}s\\
=& \int_{1/2}^1 \euler^{2\pi \imi (n s+(m+\alpha n) \varphi(s))} {\rm d}s \ .
\end{split}
\end{displaymath}
Next, consider the term corresponding to $\ell=-1$ in the above sum. We can have $r-1+\varphi(-\alpha+s)=0$ for some $r\in (0,1)$ only if $s\in (\alpha, \alpha+1/2)$. As we might choose a fundamental domain of $\Gamma$ arbitrarily for integration, that term equals
\begin{displaymath}
\begin{split}
 \int_{\alpha}^{\alpha+1/2} & \int_0^1 \xi(-1+\varphi(-\alpha+s))\euler^{2\pi \imi n s} \psi(r-1+\varphi(-\alpha+s)){\rm d}(r,s)\\
=&
\int_{\alpha}^{\alpha+1/2} \xi(-1+\varphi(-\alpha+s))\euler^{2\pi \imi ns} {\rm d}s=
\int_{0}^{1/2} \xi(-1+\varphi(s))\euler^{2\pi \imi n(s+\alpha)} {\rm d}s\\
=&\int_{0}^{1/2} \euler^{2\pi \imi (n s+(m+\alpha n) \varphi(s))} {\rm d}s \ .
\end{split}
\end{displaymath}
Combining the latter two results, we get $|a_\xi|^2=|a_{m,n}|^2$, compare Eqn.~\eqref{eq:amp}.

\section{Setting and notation}\label{Sect}

Throughout $G, H$ will denote locally compact abelian groups (LCAG) \cite{Rei2,Fol,DE}. The group operation on a LCAG will be written additively as $+$ or $\dotplus$ if necessary to avoid
misunderstandings. Given an LCA group $G$, we choose a Haar measure on $G$ and denote it by $m_G$ or by
${\rm d} t$. The dual group of $G$ is denoted by $\Ghat$, and the
pairing between a character $\chi \in \Ghat$ and $t\in G$ is
written as $\chi(t)$. As usual the Fourier transform $\widehat{f}$
of an integrable function $f$ is defined by $\widehat{f} (\chi) =
\int_G \overline{\chi(t)} f(t) \, {\rm d}t$. We always choose the Haar measure on $\widehat G$ such that the Plancherel theorem \cite[Thm.~3.4.8]{DE} holds.

\subsection{Complex Radon measures}

Whenever $X$ is a topological space
the space of continuous functions on $X$ is denoted by $C(X)$, the subspace of continuous functions with compact
support by $C_c (X)$, the space of continuous bounded functions by $C_b (X)$, and the subspace of uniformly continuous and bounded functions by $C_\mathsf{u}(X)$. The latter two spaces are complete normed spaces when equipped with the supremum norm $\|\cdot\|_\infty$. We will often deal with locally compact $\sigma$-compact spaces.

\smallskip

A topological space $X$ carries the Borel $\sigma$-algebra generated
by all closed subsets of $X$.  By the Riesz-Markov representation
theorem, the set $\mathcal{M} (X)$ of all complex Radon
measures on a locally compact space $X$ can then be identified with the dual space $C_c
(X)^\ast$ of complex valued, linear functionals on $C_c(X)$ which are
continuous with respect to a suitable topology, see \cite[Ch.\
6.5]{Ped} for details. For this reason we usually write
 $\int_X \varphi \dd\mu =\mu(\varphi)$ for $\varphi\in C_c(X)$. 
The space $\mathcal{M} (X)$ then  carries the
vague topology, i.e., the weakest topology that makes all functionals
$\mu\mapsto \mu(\varphi)$, $\varphi\in C_c (X)$, continuous.
If $X$ is metrizable, then $\mathcal M(X)$ is metrizable as well. The convolution $\mu \ast \nu$ of two finite
measures $\mu$ and $\nu$ on $G$ is defined to be the measure $(\mu\ast \nu)
(\varphi) :=\int \int \varphi(s+t) \dd\mu(s) \dd\nu(t)$.

\smallskip

By \cite[Thm.~6.5.6]{Ped}, for each measure 
 $\mu \in \mathcal{M} (X)$ there exists a positive measure $|\mu|$, called the total variation of $\mu$ such that
for all $\varphi \in C_c(X)$ with $\varphi \geq 0$ we have
\[
|\mu|(\varphi) = \sup \{ \left| \mu(\psi) \right| : \psi \in C_c(X), |\psi| \leq \varphi \} \,.
\]

Now let $G$ be a LCAG. A measure $\mu\in \MM $ is called {\em translation bounded\/} if
there exist some  $C>0$ and an open nonempty relatively compact  set $V$ in $G$  so
that
\begin{equation}\label{mcv} |\mu| (t+V) \leq C
\end{equation}
for every $t\in G$.
The set of all translation bounded measures
satisfying \eqref{mcv} is denoted by $\MCV$. It carries the vague topology inherited from $\MM$ and is compact in that topology. The set of all translation bounded measures is denoted by $\MTB$.

\subsection{Measure dynamical systems}\label{sec:mds}
Whenever the LCAG  $G$ acts on the compact Hausdorff  space $\varOmega$  by a continuous action
\begin{equation*}
   \alpha \! : \; G\times \varOmega \; \longrightarrow \; \varOmega
   \, , \quad (t,\omega) \, \mapsto \, \alpha^{}_{t} (\omega) \, ,
\end{equation*}
where $G\times \varOmega$ carries the product topology, the pair
$\Oomega$ is called a {\em topological dynamical system\/} over $G$.
An $\alpha$-invariant probability measure on $\varOmega$ is then
called {\em ergodic\/} if every measurable invariant subset of
$\varOmega$ has either measure zero or measure one. The dynamical
system $\Oomega$ is called {\em uniquely ergodic\/} if there exists a
unique $\alpha$-invariant probability measure on $\varOmega$, which
then is ergodic by standard theory. $\Oomega$ is called {\em
minimal\/} if, for all $\omega\in\varOmega$, the $G$-orbit
$\{\alpha^{}_t\ts \omega : t \in G\}$ is dense in $\varOmega$.

Given an $\alpha$-invariant probability measure $m$ on $\varOmega$,
we can form the Hilbert space $\LO$ of square integrable measurable
functions on $\varOmega$. This space is equipped with the inner
product
\begin{equation*}
    \langle f, g\rangle \; = \;
    \langle f, g\rangle^{}_\varOmega \; := \;
    \int_\varOmega \overline{f(\omega)}\, g(\omega) \dd m(\omega) \ .
\end{equation*}
The action $\alpha$ gives rise to a unitary representation $T :=
T^\varOmega := T^{(\varOmega,\alpha,m)}$ of $G$ on $\LO$ by
\begin{equation*}
  T_t \! : \; \LO \; \longrightarrow \; \LO \, ,
  \quad (T_t f) (\omega) \; := \;
  f(\alpha^{}_{-t}\ts \omega) \, ,
\end{equation*}
for every $f\in \LO$ and arbitrary $t\in G$.
An $f\in \LO$ is called an {\em eigenfunction\/} of $T$ with
{\em eigenvalue\/} $\xi\in \Ghat$ if  for every
$t\in G$ we have $T_t f = \xi(t) f$.  An eigenfunction (to $\chi$, say) is called {\em continuous\/}
if it has a continuous representative $f$  with
$f(\alpha^{}_{-t} \ts \omega) = \xi(t)\ts f(\omega)$, for all
$\omega\in\varOmega$ and $t\in G$. The representation $T$ is said to have
{\em pure point spectrum\/} if the set of eigenfunctions is total in $\LO$.
One then also says that the dynamical system $\Oomega$ has
{\em pure point dynamical spectrum}.

  Let two topological dynamical systems\/ $\Oomega$ and\/ $\Ttheta$
  under the action of $G$ be given.  Then $\Ttheta$ is called a\/
  {\em (topological) factor} of $\Oomega$, with factor map\/ $\varPhi$, if\/
  $\varPhi \! : \varOmega \longrightarrow\varTheta $ is a continuous
  surjection with $\varPhi (\alpha^{}_t (\omega)) = \beta^{}_t
  (\varPhi (\omega))$ for all\/ $\omega\in \varOmega$ and\/ $t\in G$.

\smallskip

We will be concerned with dynamical systems built from measures.  These
systems will be discussed next. They have been introduced in \cite{BL,BL2}, to
which we refer for further details and proofs of the subsequent discussion.
There is an obvious action of $G$ on $\MTB$, again denoted
by $\alpha$, given by
\begin{equation*}
   \alpha \! : \; G\times  \MTB \; \longrightarrow \; \MTB
   \, , \quad (t,\nu) \, \mapsto \, \alpha^{}_t\ts \nu
   \quad \mbox{with} \quad (\alpha^{}_t \ts \nu)(\varphi) \, := \,
   \nu(\delta_{-t} \ast \varphi)
\end{equation*}
for $\varphi \in C_c (G)$.  Here, $\delta_t$ denotes the unit point
mass at $t \in G$ and the convolution $\omega \ast \varphi$ between
$\varphi\in C_c (G)$ and $\omega \in \MTB$ is defined by
\[ 
(\omega \ast \varphi) (s) := \int \varphi (s - u) \dd \omega(u) \ .
\]
It is not hard to see that $\alpha$ is continuous when restricted to a
compact subset of $\MTB$.

\begin{definition}
  $\Oomega$ is called a dynamical system on the translation bounded
  measures on\/ $G$  {\rm (TMDS)} if
  $\varOmega$ is a compact  $\alpha$-invariant subset of\/ $\MCV$ for some open relatively compact $V$ and $C>0$.
 \end{definition}

Every translation bounded measure $\nu$ gives rise to a TMDS
$(\Omega(\nu),\alpha)$ \cite{BL}, where
\begin{displaymath}
\Omega(\nu):=\overline{\{\alpha_t\nu : t\in G\}}\,.
\end{displaymath}
More precisely, if $\nu\in \MCV$, then $\varOmega(\nu) \subset \MCV$. We call $\Omega(\nu)$ the hull of $\nu$.

\subsection{Mathematical diffraction theory}\label{difftheo}

We review elements of mathematical diffraction theory following \cite{BL,BL2,Len}.
Fix an LCA group $G$ and assume that $G$ is $\sigma$-compact. Then $G$ admits a \textit{van Hove sequence} \cite{Martin2}, i.e., a sequence $(B_n)_{n\in\mathbb N}$ of compact sets in $G$ such that for every  compact $K\subset G$ we have
$$
\lim_{n\to \infty} \frac{m_G (\partial^K B_n)}{m_G (B_n)} =0 \ .
$$
Here for arbitrary compact $A,K\subset G$ we use the notion of van Hove boundary
\begin{displaymath}
\partial^K A:= (( K + A )\setminus A^\circ ) \cup (( - K +\overline{G\setminus A})\cap A ) \ ,
\end{displaymath}
where the bar denotes the closure of a set and the circle denotes the
interior. In $G=\mathbb R^d$, the closed $n$-balls constitute a van Hove sequence. Fix $\omega\in \mathcal M^\infty(G)$, such as the Dirac comb of a point set in $G$, and consider for arbitrary  $\xi\in \widehat G$ the average
\begin{equation}
a_\xi = \lim_{n\to \infty} \frac{1}{m_G(B_n)} \int_{B_n} \overline{\xi (t)} \, {\rm d}\omega(t) \ .
\end{equation}
 In this article, the above limit will always exist and will always be independent of the choice of the van Hove sequence. In crystallography, the number $a_\xi$ is called the \textit{scattering amplitude} or the \textit{Fourier--Bohr coefficient}. In a kinematic diffraction experiment \cite{Cowley}, the  intensity of diffraction at $\xi$ is given by $|a_\xi|^2$.

\smallskip

Hof \cite{Hof} suggested the following mathematical framework for diffraction. Given $\omega\in \mathcal M^\infty(G)$, one first computes the \textit{autocorrelation measure}
\begin{displaymath}
\gamma_\omega = \lim_{n\to \infty} \frac{1}{m_G(B_n)} \omega_{B_n} \ast
\widetilde{\omega_{B_n}} 
\end{displaymath}
with respect to the given van Hove sequence $(B_n)_{n\in \mathbb N}$.
Here the limit is taken in the vague topology,  $\omega_{B_n}$ denotes the
 restriction of $\omega$ to $B_n$, and the reflected measure
$\widetilde{\mu}$ is, for $\mu\in \mathcal M^\infty(G)$,  defined by $\widetilde{\mu}(\varphi):=\overline{ \mu(
  \overline{ \varphi^\dag} })$, where $\varphi^\dag(x)=\varphi(-x)$.  In this article, the above limit will always exist and will always be  independent of the choice of the van Hove sequence. More generally, this is the case if the hull $(\Omega(\omega), \alpha)$ is uniquely ergodic.%
\footnote{In general, a limit will always exist on some subsequence of the given van Hove sequence.}
As $\gamma_\omega$ is a positive definite measure by construction, its  Fourier transform $\widehat{\gamma_\omega}$ exists as a measure. It is called the
\textit{diffraction measure} of $\omega$  (see \cite{Fol,BF,ARMA1,GdeL,MoSt} for definition and background on Fourier transforms of complex Radon measures). Its point part is  explicitly given by
\begin{equation}\label{eq:FBC2}
\widehat{\gamma_\omega} (\{\xi\}) = |a_\xi|^2=\left|\lim_{n\to \infty} \frac{1}{m_G(B_n)} \int_{B_n} \overline{\xi (t)} \, {\rm d}\omega(t) \right|^2 \ ,
\end{equation}
see e.g.~\cite[Thm.~3.4]{Hof}.
We say that $\omega\in \mathcal M^\infty(G)$ is \textit{pure point diffractive} (relative to the given van Hove sequence) if the autocorrelation $\gamma_\omega$ exists and its Fourier transform $\widehat{\gamma_\omega}$ is a pure point measure. 

\medskip

We now turn to a dynamical description of diffraction \cite{BL,Len}. Let $G$ be an arbitrary LCAG. Let $\Oomega$ be a TMDS, equipped with an $\alpha$-invariant measure $m$. Let $\psi \in C_c (G)$ with $\int \psi(t) \dd t =1$ be given. Then,
$\gamma_m : C_c (G) \longrightarrow \CC$ defined by
\[ \gamma_m (\varphi) := \int_\varOmega \int_G \int_G \varphi ( s + t) \psi
(t) \dd \omega(s) \dd \widetilde{\omega}(t) \dd m (\omega)\,,
\]
is a positive definite measure which does not depend on the choice of $\psi$. 
 The measure $\gamma_m$ is called the autocorrelation measure of $(\Omega, \alpha, m)$.  If $e_\xi\in L^2(\Omega, m)$ is a normalized eigenfunction of $T$ with eigenvalue $\xi\in \widehat G$, we then have
\begin{equation}\label{eq:FBC1}
\widehat{\gamma_m}(\{\xi\}) = \left|\int_{\Omega} \overline{e_\xi(\omega)} \cdot \omega(\xi\cdot\psi) \, {\rm d}m(\omega)\right|^2 \, ,
\end{equation}
where $\psi\in C_c(G)$ such that $\int_G \psi(t) {\rm d}t=1$ is arbitrary. The measure $\widehat{\gamma_m}$ is called the diffraction measure of $(\Omega, \alpha, m)$.

\smallskip

Assume now that $G$ is $\sigma$-compact and fix $\nu\in \mathcal M^\infty(G)$.  If $(\Omega(\nu),\alpha)$ is uniquely ergodic, we have $\widehat{\gamma_m}=\widehat{\gamma_\omega}$ for every $\omega\in \Omega(\nu)$, compare \cite[Thm.~3, Thm.~5]{Len}.%
\footnote{If $(\Omega(\nu),\alpha)$ fails to be uniquely ergodic but $m$ is ergodic, then the latter equality only holds almost surely with respect to $m$, by the Birkhoff ergodic theorem. }
In particular this means that the diffraction amplitudes can be computed in two different ways using Eqn.~\eqref{eq:FBC2} or Eqn.~\eqref{eq:FBC1}. 
In this article, we will always deal with uniquely ergodic $(\Omega(\nu),\alpha)$. We will always compute the diffraction measure using the dynamical approach. In particular, we do not need to impose $\sigma$-compactness of $G$.

\section{Strongly almost periodic measures}\label{Strongly}

In Section~\ref{sec:sinemod} we discussed sine modulated integers. We showed that its hull is a compact abelian group, an insight which enabled us to infer the diffraction of sine modulated integers. The present section is devoted to a discussion of the general phenomenon, where we use measures instead of point sets. Note that diffraction has recently been studied for more general weakly almost periodic measures \cite{LenStr16}. For the ease of the reader we include proofs adapted to our simpler setting.

\smallskip

Consider $\nu\in \mathcal M^\infty(G)$, such as the Dirac comb of a Delone set in $G$. The following lemma characterises the situation that its hull $\Omega(\nu)$ is a group compactification of $G$ in terms of strong almost periodicity of $\nu$. This is certainly well known, see e.g.~\cite[Prop.~3.6]{LenStr16} and compare  \cite[Thm.~3.3]{KL13} and \cite[Thm.~4.2]{KL13} for Delone sets. We include a proof based on \cite{LR} for the convenience of the reader.

\smallskip

Recall that a translation bounded measure $\nu\in\mathcal{M}^\infty(G)$ is \textit{strongly almost periodic} if the function $\nu \ast \varphi\in C_\mathsf{u}(G)$ is almost periodic for every $\varphi \in C_c (G)$. As usual,  $f \in C_\mathsf{u}(G)$ is almost periodic if $\{\delta_t \ast f : t\in G\}$ is relatively compact  in $C_\mathsf{u} (G)$. See Proposition~\ref{prop:BochBohr} in Appendix~\ref{app:gvap} or \cite{C68,K04}  for background.

\begin{lemma}\label{lem:sap}
Let $\nu \in \MTB$ be given. Then, the following assertions are equivalent:
\begin{itemize}
\item[(i)] The hull $\Omega(\nu)$ of $\nu$ is a compact abelian group, with the addition $\dot{+}$ satisfying $\alpha_s \nu \dot{+} \alpha_t \nu = \alpha_{s+t} \nu$ for any $s, t \in G$.
\item[(ii)] The measure $\nu$ is strongly almost periodic.
\item[(iii)] Every $\omega\in\Omega(\nu)$ is strongly almost periodic.
\end{itemize}
In this case, the map  $\jmath: G \to \Omega(\nu)$ defined by $\jmath(t) = \alpha_{t} \nu$ is a continuous group homomorphism with dense range, and
$$\alpha_{t} \omega = \jmath(t) \dot{+} \omega$$
holds for any $t\in G$ and any $\omega \in \Omega(\nu)$.
\end{lemma}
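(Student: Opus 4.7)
The plan is to prove the cycle $(i) \Rightarrow (ii) \Rightarrow (i)$ and to derive $(iii)$ from $(i)$, noting that $(iii) \Rightarrow (ii)$ is trivial since $\nu \in \Omega(\nu)$. The main obstacle will be $(ii) \Rightarrow (i)$: extracting a compact abelian group structure on the hull from strong almost periodicity alone.

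For $(i) \Rightarrow (ii)$ together with $(i) \Rightarrow (iii)$, I would argue via equicontinuity. Granting $(i)$, the map $\jmath: t \mapsto \alpha_t \nu$ is a continuous group homomorphism with dense range by the very definition of the addition, $\jmath(0) = \nu$ is the group identity, and the $G$-action agrees with translation by $\jmath(t)$ on the dense orbit $\{\alpha_s \nu : s \in G\}$, hence on all of $\Omega(\nu)$ by continuity. Translations in a compact abelian group are isometries of the canonical uniformity, so the family $\{\alpha_s\}_{s \in G}$ is equicontinuous on $\Omega(\nu)$. Using the identity $(\omega \ast \varphi)(s) = (\alpha_{-s}\omega)(\check\varphi)$ with $\check\varphi(x) = \varphi(-x)$ and the uniform continuity of $f_{\check\varphi}: \omega \mapsto \omega(\check\varphi)$ on the compact space $\Omega(\nu)$, equicontinuity upgrades vague convergence $\omega_n \to \omega$ to uniform convergence $\omega_n \ast \varphi \to \omega \ast \varphi$ in $C_u(G)$. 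Consequently $\omega \mapsto \omega \ast \varphi$ sends $\Omega(\nu)$ to a compact subset of $C_u(G)$, and in particular $\{\delta_t \ast (\omega \ast \varphi): t \in G\} = \{(\alpha_t\omega) \ast \varphi: t \in G\}$ is precompact in $C_u(G)$ for every $\omega \in \Omega(\nu)$ and every $\varphi \in C_c(G)$. Hence every $\omega \in \Omega(\nu)$, and in particular $\nu$, is strongly almost periodic.

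For the hard direction $(ii) \Rightarrow (i)$, I would extend $\jmath$ to the Bohr compactification $G_B$ of $G$. Strong almost periodicity of $\nu$ says that for each $\varphi \in C_c(G)$ the function $h_\varphi(t) := (\alpha_t\nu)(\varphi)$ is almost periodic on $G$, hence admits a continuous extension $\tilde h_\varphi$ to $G_B$. Assembling these, the rule $\tilde\jmath(x)(\varphi) := \tilde h_\varphi(x)$ defines a continuous map $\tilde\jmath: G_B \to \MCV$ extending $\jmath$; since $\Omega(\nu) \subset \MCV$ is vaguely closed and $\tilde\jmath(x)$ is a vague limit of a net in $\jmath(G) \subset \Omega(\nu)$, the image of $\tilde\jmath$ lies in $\Omega(\nu)$ and, being compact and dense, equals $\Omega(\nu)$. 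Starting from the cocycle relation $\jmath(t+s) = \alpha_s \jmath(t)$ for $s, t \in G$, continuity in $t$ gives $\tilde\jmath(x+s) = \alpha_s \tilde\jmath(x)$ for $x \in G_B$, $s \in G$. From this one checks that the fibers of $\tilde\jmath$ are closed and translation-invariant: if $\tilde\jmath(x) = \tilde\jmath(y)$, then applying $\alpha_s$ yields $\tilde\jmath(x+s) = \tilde\jmath(y+s)$ for $s \in G$, and this extends to all $s \in G_B$ by continuity of $\tilde\jmath$ together with continuity of addition on $G_B$. Hence $\tilde\jmath^{-1}(\nu)$ is a closed subgroup of $G_B$, the map $\tilde\jmath$ factors through the quotient $G_B / \tilde\jmath^{-1}(\nu)$ as a continuous bijection between compact Hausdorff spaces, and this bijection transports the quotient group structure to $\Omega(\nu)$. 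By construction $\alpha_s \nu \dot{+} \alpha_t \nu = \tilde\jmath(s) \dot{+} \tilde\jmath(t) = \tilde\jmath(s+t) = \alpha_{s+t}\nu$, completing $(i)$.

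The concluding assertions are then immediate: continuity of $\jmath: G \to \Omega(\nu)$ follows from continuity of the $G$-action, while the homomorphism property and density of $\jmath(G)$ are parts of $(i)$; finally, the identity $\alpha_t \omega = \jmath(t) \dot{+} \omega$ holds on the dense orbit $\{\alpha_s \nu\}$ by the defining relation for $\dot{+}$ and extends to all $\omega \in \Omega(\nu)$ by continuity in $\omega$. I expect the delicate step throughout to be the passage from $G$ to $G_B$ in $(ii) \Rightarrow (i)$: the subgroup property of $\tilde\jmath^{-1}(\nu)$ genuinely relies on the two-step continuity extension of the cocycle identity, first in the $G$-variable on the right and then in the $G_B$-variable via continuity of $\tilde\jmath$ and of addition on $G_B$.
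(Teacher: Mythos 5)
Your proposal is correct, and it is genuinely more self-contained than what the paper does. The paper outsources the equivalence of (i) and (ii) to an external reference (Lemma~4.2 of \cite{LR}), disposes of (ii)$\Leftrightarrow$(iii) with the single observation that all functions $\omega\ast\varphi$, $\omega\in\Omega(\nu)$, share the same $\varepsilon$-almost periods, and proves the action formula $\alpha_t\omega=\jmath(t)\dot{+}\omega$ by a direct two-case computation (for $\omega$ on the orbit and for $\omega$ a net limit). You instead prove the whole cycle from scratch: your Bohr-compactification argument for (ii)$\Rightarrow$(i) --- extending $t\mapsto(\alpha_t\nu)(\varphi)$ to $G_\mathsf{b}$, assembling the extensions into a continuous surjection $\tilde\jmath:G_\mathsf{b}\to\Omega(\nu)$, and quotienting by the closed subgroup $\tilde\jmath^{-1}(\nu)$ --- is essentially the standard mechanism hidden inside the cited lemma, and your two-step density argument for why the fibers are cosets (first extending the cocycle identity in the $G$-variable, then in the $G_\mathsf{b}$-variable) is exactly the point that needs care and is handled correctly. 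Your equicontinuity route for (i)$\Rightarrow$(iii), via the identity $(\omega\ast\varphi)(s)=(\alpha_{-s}\omega)(\varphi^\dag)$ and the fact that translations of a compact group are uniform isomorphisms, is a clean alternative to the paper's almost-period argument and has the merit of exhibiting $\omega\mapsto\omega\ast\varphi$ as a continuous map into $C_\mathsf{u}(G)$ with compact image, which is more information than strict almost periodicity of each individual $\omega$. The only points worth making explicit in a written version are (a) that $\tilde\jmath(x)$ is indeed a measure, which follows from vague compactness of $\mathcal{M}_{C,V}(G)$ applied to the net $(\alpha_{t_\iota}\nu)$ with $t_\iota\to x$ in $G_\mathsf{b}$, and (b) that the continuous bijection $G_\mathsf{b}/\tilde\jmath^{-1}(\nu)\to\Omega(\nu)$ is a homeomorphism because the source is compact and the target Hausdorff; both are routine.
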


\begin{proof}
The equivalence between (i) and (ii) can be inferred from \cite[Lemma\,4.2]{LR}.
For the equivalence between (ii) and (iii), consider any $\varphi\in C_c(G)$ and any $\varepsilon>0$. Then all functions $\omega *\varphi\in C_\mathsf{u}(G)$, where $\omega$ ranges over $\varOmega(\nu)$, have the same $\varepsilon$-almost periods.

 Finally, the map is continuous with dense range.  It is a group homomorphism when $\Omega (\nu)$ carries the group structure defined in (i). It remains to compute the action $\alpha$ in terms of $j$:
If $\omega = \alpha_s \nu$, then for any $\varphi \in C_c(G)$,
\begin{eqnarray*}
(\alpha_t\omega)(\varphi) &=& \omega(\delta_{-t} \ast \varphi) = (\alpha_s \nu) (\delta_{-t} \ast \varphi) = \nu(\delta_{-s} \ast \delta_{-t} \ast \varphi) \\
 &=& \nu(\delta_{-s-t} \ast \varphi) = (\alpha_{s+t} \nu)(\varphi) = (\alpha_t \nu \dot{+} \alpha_s \nu)(\varphi) \\
 &=& (\jmath(t) \dot{+} \omega)(\varphi)\,.
\end{eqnarray*}
Let $\omega = \lim_{\iota} \alpha_{s_\iota}\nu$, i.e., $(\alpha_{s_\iota}\nu)_\iota$ is a net converging to $\omega$. Then
\begin{eqnarray*}
(\alpha_t\omega)(\varphi) &=& \omega(\delta_{-t} \ast \varphi) = \lim_{\iota} (\alpha_{s_\iota} \nu)(\delta_{-t} \ast \varphi)
= \lim_{\iota} \nu(\delta_{-s_\iota} \ast \delta_{-t} \ast \varphi) \\
&=& \lim_{\iota} (\alpha_{t+ s_\iota} \nu)(\varphi) = \lim_{\iota} (\alpha_{t} \nu \dot{+} \alpha_{s_\iota} \nu)(\varphi)
= (\alpha_{t} \nu \dot{+} \lim_{\iota} \alpha_{s_\iota} \nu)(\varphi) \\
&=& (\jmath(t) \dot{+} \omega)(\varphi)\,.
\end{eqnarray*}
\qed
\end{proof}

We can now readily derive basic spectral properties of dynamical systems associated to strongly almost periodic measures.  For notation and basic results of mathematical diffraction theory, recall Section~\ref{Sect}.

\begin{theorem} \label{stronglyAlmostPeriodic-implies-purePointSpectrum}
Let $\nu \in \MTB$ be strongly almost periodic and $\jmath: G\longrightarrow \Omega (\nu)$ the canonical mapping described in Lemma~\ref{lem:sap}.  Then the following hold.
\begin{itemize}
\item[(i)] $(\Omega(\nu), \alpha)$ is uniquely ergodic and minimal.
\item[(ii)]
For the shift operator $T$ on $L^2(\Omega(\nu))$, any character $\lambda:\Omega(\nu) \to S^1$ is an eigenfunction of $T$ to the eigenvalue $\lambda \circ \jmath: G\to S^1$.  The set $\widehat{\Omega(\nu)}$ of continuous
characters  provides an orthonormal basis of $L^2(\Omega(\nu))$.  In particular, every character is continuous, and the measure dynamical system
$(\Omega(\nu), \alpha)$ has pure point dynamical spectrum.

\item[(iii)] The measure $\nu$ has a unique autocorrelation $\gamma$. Its Fourier transform, the diffraction measure  $\widehat{\gamma}\in \mathcal M^\infty(\widehat G)$, is pure point and given by
\begin{displaymath}
\widehat{\gamma} = \sum_{\lambda \in \widehat{\Omega(\nu)}} |a_{\lambda\circ \jmath }|^2\, \delta_{\lambda \circ \jmath} \, ,
\end{displaymath}
where the constants $a_{\lambda\circ \jmath}$ are given by
\begin{displaymath}
a_{\lambda\circ \jmath}=\int_{\Omega(\nu)} \overline{\lambda(\omega)} \, \omega(\lambda\circ \jmath \cdot \varphi) \, {\rm d} \omega \ ,
\end{displaymath}
and $\varphi\in C_c(G)$ is any function satisfying $\int_G \varphi(t) \, {\rm d}t =1$.
\end{itemize}
\end{theorem}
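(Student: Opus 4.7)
By Lemma~\ref{lem:sap} the hull $\Omega(\nu)$ carries a continuous abelian group operation $\dot{+}$, and the map $\jmath\colon G\to\Omega(\nu)$, $\jmath(t)=\alpha_t\nu$, is a continuous group homomorphism with dense range satisfying $\alpha_t\omega=\jmath(t)\dot{+}\omega$. My plan is to transport the problem onto the compact abelian group $\Omega(\nu)$ and read off every claim from standard harmonic analysis on that group, taking $m$ to be its normalised Haar measure throughout.

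For (i), observe that because $\jmath(G)$ is dense, translation by the subgroup $\jmath(G)$ on the compact abelian group $\Omega(\nu)$ is uniquely ergodic with respect to $m$; this is the standard Kronecker--Weyl argument (the characters of $\Omega(\nu)$ are joint eigenfunctions of all translations, so any $\alpha$-invariant $L^2$-function has constant Fourier coefficients and is hence $m$-a.e.\ constant). Minimality is even more direct: for any $\omega\in\Omega(\nu)$ the orbit $\{\jmath(t)\dot{+}\omega:t\in G\}$ is the image of the dense set $\jmath(G)$ under the continuous map $x\mapsto x\dot{+}\omega$, and hence is dense.

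For (ii), fix $\lambda\in\widehat{\Omega(\nu)}$. Multiplicativity of $\lambda$ and $\alpha_{-t}\omega=\jmath(-t)\dot{+}\omega$ yield
\[
(T_t\lambda)(\omega)=\lambda(\alpha_{-t}\omega)=\lambda(\jmath(-t))\,\lambda(\omega)=\overline{(\lambda\circ\jmath)(t)}\,\lambda(\omega),
\]
so $\lambda$ is a continuous eigenfunction whose eigenvalue character in $\widehat{G}$ is $\lambda\circ\jmath$ (up to the usual conjugation convention), and distinct $\lambda$ give distinct $\lambda\circ\jmath$ by density of $\jmath(G)$. That $\widehat{\Omega(\nu)}$ forms an orthonormal basis of $L^2(\Omega(\nu),m)$ is Pontryagin duality, i.e.\ the Peter--Weyl theorem for compact abelian groups, and totality of these eigenfunctions is precisely pure point dynamical spectrum. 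Automatic continuity of measurable homomorphisms from a compact group into $S^1$ then shows that every a priori only measurable character is already continuous.

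For (iii), unique ergodicity from (i) ensures that $\nu$ has a unique autocorrelation $\gamma$, which coincides with the autocorrelation $\gamma_m$ of the measure dynamical system; since $(\Omega(\nu),\alpha,m)$ has pure point spectrum, $\widehat\gamma$ is concentrated on the eigenvalue set $\{\lambda\circ\jmath:\lambda\in\widehat{\Omega(\nu)}\}$, and applying formula~\eqref{eq:FBC1} with $e_\xi=\lambda$, $\xi=\lambda\circ\jmath$, produces the stated expression for $a_{\lambda\circ\jmath}$ and hence for $\widehat\gamma$. The only substantive obstacle is spectral completeness in (ii): one must exclude any continuous spectrum and any further point spectrum beyond $\widehat{\Omega(\nu)}\circ\jmath$, and this is exactly what Pontryagin duality delivers by making $\widehat{\Omega(\nu)}$ an orthonormal basis of $L^2(\Omega(\nu),m)$. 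Everything else is a straightforward translation of properties of $(\Omega(\nu),\dot{+})$ through the homomorphism $\jmath$.
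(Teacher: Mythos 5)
Your proof is correct and follows essentially the same route as the paper: the group structure on $\Omega(\nu)$ from Lemma~\ref{lem:sap}, Haar measure as the unique invariant measure, characters as continuous eigenfunctions via the Peter--Weyl theorem, and Eqn.~\eqref{eq:FBC1} for the diffraction. The one soft spot is your parenthetical justification of unique ergodicity, which as written only establishes ergodicity of the Haar measure; to get uniqueness one should argue, as the paper does, that any $\alpha$-invariant probability measure is invariant under the dense subgroup $\jmath(G)$ and hence, by continuity of the test functions, under translation by every element of $\Omega(\nu)$, so it must be the Haar measure.
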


\begin{remark}
If $\nu$ is a Dirac comb, it is possible to interpret minimality geometrically in terms of the underlying point set. The underlying point set is almost repetitive \cite[Thm.~3.11]{FR14}.
\end{remark}

\begin{proof}
(i) Let $m$ be any $G$-invariant probability measure on the hull $\Omega(\nu)$. Let $f \in C(\Omega(\nu))$ be arbitrary. Then we have for every $t\in G$ the equalities
\begin{displaymath}
\int f(\jmath(t) \dot{+} \omega) \, {\rm d}m(\omega) = \int f(\alpha_{-t}\omega) \, {\rm d}m(\omega) = \int f(\omega) \, {\rm d}m(\omega)\,,
\end{displaymath}
as $m$ is $G$-invariant.
Since $\jmath(G)$ is dense in $\Omega(\nu)$ and $f$ is continuous on $\Omega(\nu)$, we have
\begin{displaymath}
\int f(\sigma \dot{+} \omega) \, {\rm d}m(\omega) = \int f(\omega) \, {\rm d}m(\omega)
\end{displaymath}
for every $\sigma \in \Omega(\nu)$.
Thus $m$ is a Haar measure. Since $m$ is a probability measure, $m$ is unique. This implies that $\Omega(\nu)$ is minimal, see e.g.~\cite[Thm.~6.17]{Wal}. Here is a direct argument: Assume $\omega=\lim_\iota \alpha_{t_\iota}\nu \in \Omega(\nu)$. Then $\Omega(\omega)\subset \Omega(\nu)$ by translation invariance and closedness of $\Omega(\nu)$. It is also readily seen that $\nu=\lim_\iota \alpha_{-t_\iota}\omega$, which implies $\Omega(\nu)\subset \Omega(\omega)$. Thus $\Omega(\omega)=\Omega(\nu)$ for all $\omega\in \Omega(\nu)$, which shows minimality.

\smallskip

\noindent (ii) Let $\lambda:\Omega(\nu)\to S^1$ be any character. Then
\begin{displaymath}
(T_t \lambda)(\omega) = \lambda(\alpha_{-t} \omega) = \lambda(\jmath(t) \dot{+} \omega) = \lambda(\jmath(t)) \lambda(\omega)
\end{displaymath}
for every  $\omega \in \Omega(\nu)$.
Thus $\lambda\in L^2(\Omega(\nu))$ is an eigenfunction of $T$ to the eigenvalue $\lambda \circ \jmath$. By the Peter--Weyl theorem \cite{S96}, the \textit{continuous} characters provide an orthonormal basis of $L_2(\Omega(\nu))$. In particular, $(\Omega(\nu), \alpha)$ has pure point dynamical spectrum. As every eigenvalue is simple, compare \cite[Ch.~3]{Wal} for the case of $\mathbb Z$-actions, this also shows that any character is continuous.

\smallskip

\noindent (iii) This is an application of the diffraction formula Eqn.~\eqref{eq:FBC1} in Section~\ref{difftheo}.
\qed
\end{proof}

If $G$ is $\sigma$-compact, then both the autocorrelation and the diffraction measure can be computed via the usual limiting procedure.

\begin{corollary}\label{cor:diff}
Assume that $G$ is $\sigma$-compact and fix any van Hove sequence $(B_n)_{n\in \mathbb N}$ in $G$.
Let $\nu\in \MTB$ be strongly almost periodic. Then the following hold.
\begin{itemize}
\item[(i)] The autocorrelation measure
\begin{displaymath}
\gamma = \lim_{n \to \infty} \frac{1}{m_G(B_n)} \, \omega|_{B_n} \ast \widetilde{\omega|_{B_n}}
\end{displaymath}
exists as a vague limit for all $\omega \in \Omega(\nu)$, and the limit $\gamma\in\MTB$ is independent of the choice of $\omega\in\Omega(\nu)$.  The autocorrelation is a positive definite strongly almost periodic measure and hence Fourier transformable as a measure.

\item[(ii)] The diffraction measure  $\widehat{\gamma}\in \mathcal M^\infty(\widehat G)$ is pure point and given by
\begin{displaymath}
\widehat{\gamma} = \sum_{\lambda \in \widehat{\Omega(\nu)}} |c_{\lambda\circ \jmath }|^2\, \delta_{\lambda \circ \jmath}\,.
\end{displaymath}
In the above equation, we have
$c_{\lambda\circ \jmath }(\omega) = \lim_{n \to \infty} c_{\lambda\circ \jmath}^{(n)}(\omega)$, where
\begin{displaymath}
c_{\lambda\circ \jmath}^{(n)} (\omega) = \frac{1}{m_G(B_n)}  \int_{B_n} (\lambda \circ \jmath)(t) \, {\rm d}\omega(t) \,,
\end{displaymath}
and the limit $c_{\lambda\circ \jmath }(\omega)$ has constant modulus, which we abbreviate by $|c_{\lambda\circ \jmath }|$.
\end{itemize}
\end{corollary}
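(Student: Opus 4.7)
The plan is to bootstrap everything from Theorem~\ref{stronglyAlmostPeriodic-implies-purePointSpectrum} combined with the general results about uniquely ergodic TMDS recalled at the end of Section~\ref{difftheo}. For part (i), I first invoke Theorem~\ref{stronglyAlmostPeriodic-implies-purePointSpectrum}(i): the hull $(\Omega(\nu),\alpha)$ is uniquely ergodic, with the Haar measure $m$ on the compact group $\Omega(\nu)$ being the unique invariant probability measure. Under $\sigma$-compactness, the identity $\widehat{\gamma_m} = \widehat{\gamma_\omega}$ cited from \cite[Thm.~3, Thm.~5]{Len} yields that the van Hove limit defining the autocorrelation $\gamma_\omega$ exists for every $\omega\in\Omega(\nu)$, coincides with the dynamical autocorrelation $\gamma_m$, and is therefore independent of the choice of $\omega$. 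Positive definiteness is built into the construction $\omega|_{B_n} \ast \widetilde{\omega|_{B_n}}$ and is preserved under vague limits.

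For the strong almost periodicity of $\gamma$, I would verify directly that $\gamma \ast \varphi \in C_\mathsf{u}(G)$ is almost periodic for each $\varphi\in C_c(G)$. Writing $(\gamma_m \ast \varphi)(t)$ as an integral over $\Omega(\nu)$ of the function $\omega\mapsto (\omega \ast \widetilde\omega \ast \varphi)(t)$ and invoking Lemma~\ref{lem:sap}(iii), which tells us that every $\omega\in\Omega(\nu)$ is strongly almost periodic, one obtains a set of $\varepsilon$-almost periods that works uniformly in $\omega$. Fourier transformability of $\gamma$ as a measure then follows from positive definiteness together with translation boundedness via the Bochner–Schoenberg–Eberlein machinery of \cite{ARMA1,GdeL,MoSt}. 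Alternatively, one can derive strong almost periodicity a posteriori from part (ii): a positive definite, Fourier transformable measure whose Fourier transform is pure point is automatically strongly almost periodic.

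For part (ii), the spectral expansion $\widehat\gamma = \sum_{\lambda\in \widehat{\Omega(\nu)}} |a_{\lambda\circ\jmath}|^2\,\delta_{\lambda\circ\jmath}$ is supplied verbatim by Theorem~\ref{stronglyAlmostPeriodic-implies-purePointSpectrum}(iii). The remaining content is the identification of the dynamical amplitudes with the spatial Fourier--Bohr coefficients $c_{\lambda\circ\jmath}(\omega)$. The key input is Theorem~\ref{stronglyAlmostPeriodic-implies-purePointSpectrum}(ii): every character $\lambda$ is a \emph{continuous} eigenfunction of $T$ with eigenvalue $\lambda\circ\jmath$. On a uniquely ergodic TMDS, the van Hove averages of continuous functions converge uniformly on $\Omega(\nu)$ to their integral against $m$; this upgrades the almost-sure convergence provided by the Birkhoff ergodic theorem to everywhere convergence. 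Applying this to the averaged version of $\omega\mapsto\omega(\lambda\circ\jmath\cdot\psi)$ and invoking Lenz's identification \cite[Thm.~5]{Len} between dynamical and spatial Fourier--Bohr coefficients shows that $c_{\lambda\circ\jmath}(\omega)$ exists as a pointwise limit for every $\omega$, and that $|c_{\lambda\circ\jmath}(\omega)| = |a_{\lambda\circ\jmath}|$ is constant in $\omega$.

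The principal technical obstacle is the upgrade from convergence in mean (or almost surely) to pointwise convergence along the van Hove sequence for \emph{every} $\omega\in\Omega(\nu)$, including the constant-modulus statement. This is precisely where the combination of unique ergodicity (Theorem~\ref{stronglyAlmostPeriodic-implies-purePointSpectrum}(i)) and continuity of all eigenfunctions (Theorem~\ref{stronglyAlmostPeriodic-implies-purePointSpectrum}(ii)) becomes essential: unique ergodicity alone would only give an almost-sure statement against $m$, whereas the continuity of $\lambda$ lets one invoke the uniform Birkhoff-type theorem for uniquely ergodic $G$-actions. Once this hurdle is cleared, everything else is a routine assembly of the pieces already in place.
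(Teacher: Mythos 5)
Your proposal is correct and follows essentially the same route as the paper: the paper's three-line proof likewise delegates existence and $\omega$-independence of $\gamma$ to the unique-ergodicity discussion of Section~\ref{difftheo} (i.e.\ to \cite{Len}), gets pure pointness from Theorem~\ref{stronglyAlmostPeriodic-implies-purePointSpectrum}, and reads off the amplitudes from Eqn.~\eqref{eq:FBC2}. You merely spell out details the paper leaves implicit — strong almost periodicity of $\gamma$ via uniform $\varepsilon$-almost periods over the hull, and the everywhere-convergence and constant modulus of the Fourier--Bohr coefficients via continuity of the eigenfunctions on a uniquely ergodic system — all of which are sound.
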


\begin{proof}
Existence of $\gamma$ and independence of the choice of $\omega\in \Omega(\nu)$ is discussed in Section~\ref{difftheo}. Pure point dynamical spectrum of $(\Omega(\nu),\alpha)$, which is shown in Theorem~\ref{stronglyAlmostPeriodic-implies-purePointSpectrum}, implies that $\widehat\gamma$ is a pure point measure, see e.g.~\cite{Martin2}. The explicit expression for the amplitudes follows from Eqn.~\eqref{eq:FBC2} in Section~\ref{difftheo}.
\qed
\end{proof}

\section{Cut-and-project schemes and model sets}\label{cpmodel}

As a preparation for the following sections, we explain the cut-and-project construction. In the physics community, it has been invented to obtain point sets with unusual symmetries \cite{KN} and diffraction  \cite{DK} such as models of quasicrystals, see e.g.~\cite{RS17b} for a recent review.  In fact that construction already emerged in Meyer's work \cite{Meyer} within a harmonic analysis context. Meyer's approach was later popularised by Moody. We will adopt his notation  \cite{Moody-old,Moody-model sets}.

\smallskip

A triple $\cp$ is called a cut-and-project scheme (or simply CPS) if
$G$ and $H$  are LCAG and
$\cL$ is a {\em lattice\/} in $G\times H$ (i.e., a co-compact
discrete subgroup) such that
\begin{itemize}
\item the canonical projection $\pi^G : G\times H \longrightarrow G$ is
one-to-one between $\cL$ and $L:=\pi^G (\cL)$ (in other words,
$\cL \cap (\{0\}\times H) =\{(0,0)\}$), and
\item the image $L^\star = \pi^{H}(\cL)$ of the canonical
  projection  $\pi^{H} : G\times H
\longrightarrow H$ is dense in $H$.
\end{itemize}
The group $H$ is called the {\em internal} space. Given
these properties of the projections $\pi^G$ and $\pi^H$, one
can define the so-called star map $(\cdot)^\star\!: L \longrightarrow H$ as follows.
If $x\in L$, then there is a unique $y\in H$ such that $(x,y)\in\cL$, and we set $x^\star=y$.
If we denote the inverse map of $(\pi^G|_\cL)$ by $(\pi^G|_\cL)^{-1}:L\to \cL$, we then
have $x^\star = \big( \pi^H \circ (\pi^G|_\cL)^{-1}\big) (x)$.
The situation is summarized in the following diagram.
\begin{center}
\begin{tikzcd}
G & \arrow[swap]{l}{\pi^G} G\times H \arrow{r}{\pi^H}& H \\
L \arrow[hookrightarrow]{u}  \arrow[swap, bend right=10]{rr}{\star} & \arrow[hookrightarrow]{u} \arrow[swap, bend right=5]{l}{1-1} \cL \arrow[bend left=5]{r}{\pi^H} & \arrow[hookrightarrow, swap]{u}{\text{dense}}L^\star
\end{tikzcd}
\end{center}
We fix Haar measures $m_G$ on $G$ and $m_H$ on $H$. We then denote by $\mathrm{dens}(\cL)$ the  inverse Haar measure of a measurable fundamental domain for $\cL$.  If $G\times H$ is $\sigma$-compact, it equals  the density of lattice points in $G\times H$. Given a CPS,
we can associate to any $W \subset H$, called the {\em window}, the set
\begin{displaymath}
\oplam(W):= \{x \in L : x^{\star} \in W\}\,.
\end{displaymath}
If $W$ is relatively compact, then $\oplam(W)$ is called a weak model set. Any weak model set is uniformly discrete. If in addition $\mathring{W}\ne\varnothing$, then $\oplam(W)$ is called a model set. Any model set is a Delone set, i.e., it is uniformly discrete and relatively dense.

\smallskip

A CPS gives rise to a dynamical system as follows. Define $\TT :=(G \times H) / \cL$.  Then $\TT$ is a compact abelian group by assumption on $\cL$. Let
\[ G \times H \longrightarrow \TT, \;\:(t,k)\mapsto [t,k],\]
be the canonical quotient map.  There is a canonical continuous group
homomorphism
\[ \iota : G\longrightarrow \TT, \;\: t \mapsto [t,0].\]
The homomorphism $\iota$ has dense range as $L^\star$ is dense in $H$. It
 induces an action $\beta$ of $G$ on $\TT$ via
\[\beta : G\times \TT \longrightarrow\TT,\:\; \beta_t([s,k]):= \iota(-t) +  [s,k]= [s - t,k].\]
The dynamical system $(\TT,\beta)$ will play a crucial role in our
considerations  as it appears in the torus parametrization of the hull associated to (a weighted version of) $\oplam(W)$, see Theorem~\ref{nu-stronglyAlmostPeriodic} and \cite{LR,KR}.
The dynamical system $(\TT,\beta)$  is minimal and uniquely ergodic, as $\iota$ has
dense range.  Moreover, it has pure point spectrum. More precisely,
the dual group $\widehat{\TT}$ gives a set of continuous eigenfunctions, which form
a complete orthonormal basis by the Peter--Weyl theorem  (see \cite{S96} and \cite{Martin2} for
further details). By Pontryagin duality, the dual group $\widehat{\TT}$ can be naturally
identified with the dual lattice
\begin{displaymath}
\cL^0=\{(\chi,\eta)\in \widehat G\times \widehat H: \chi(x)\eta(y)=1 \text{ for all } (x,y)\in \cL\}\,.
\end{displaymath}
Here we identify $\lambda\in\widehat \TT$ and $(\chi,\eta)\in \cL^0$ if and only if
$\lambda([t,k])=\chi(t)\eta(k)$ for all $[t,k]\in\TT$. For a CPS $(G,H,\cL)$,
the situation simplifies as $(\widehat G,\widehat H, \cL^0)$ is also a CPS. As the projection
$\pi^{\widehat G}|_{\cL^0}$ is one-to-one, we can write 
\begin{displaymath}
\cL^0=\{(\chi,\chi^\star)\in \widehat G\times
\widehat H: \chi(x)\chi^\star(x^\star)=1 \text{ for all } x\in L\} \ ,
\end{displaymath}
and we may thus identify $\widehat \TT$ with $L^0:=\pi^{\widehat G}(\cL^0)$. The potential Bragg peak positions in the diffraction of $\oplam(W)$ are elements of $L^0$, compare Theorem~\ref{nu-stronglyAlmostPeriodic}.

\section{Lattice modulations}\label{sec:lmd}

We now generalize the example of Section~\ref{sec:sinemod} to (almost periodic) modulations of lattices in LCAG, see Appendix~\ref{app:gvap} for basics about almost periodic functions taking values in a group.
We will show that any modulated lattice is a deformed weighted lattice model set, and vice versa that any deformed weighted lattice model set is a modulated lattice. We will further show that the class of modulations of a lattice is stable under modulation.
In Section~\ref{Smoothly} we will show that general deformed weighted model sets are strongly almost periodic.
This means, in particular, that  any lattice modulation is strongly almost periodic.

\begin{definition}[modulated lattice]\label{def:modlat} Let $L$ be lattice in a LCAG $G$, and let $w:L\to \mathbb C$ and $g: L \to G$ be almost periodic functions. Then the weighted Dirac comb
\[\delta_L^{w,g}=\sum_{\ell\in L} w(\ell)\, \delta_{\ell+g(\ell)}\]
is called a \emph{modulation} of $L$. The functions $w,g$ are called the \emph{modulation functions}.
\end{definition}

\begin{remark}\label{rem:mult}
\begin{itemize}
\item[(i)] We interpret a lattice modulation as a measure. We thus allow for multiplicities, i.e., the above sum may include terms $\ell\ne  \ell'$ satisfying $\ell+g(\ell)=\ell'+g(\ell')$. The measure setting also allows to incorporate a weighting of the lattice points.
\item[(ii)] If the modulation functions are almost periodic, then the resulting lattice modulations are strongly almost periodic. This can be seen from Proposition~\ref{stronglyAlmostPeriodic-of-deltaLambda} and Theorem~\ref{nu-stronglyAlmostPeriodic}. In order to analyse the converse statement, restrict to $G=\mathbb R$ and $w\equiv1$.  In that situation, the converse can be shown to be true if $g$ has sufficiently small range. However the converse may not be true in general, as can be seen from the artificial example $g:\mathbb Z\to \mathbb R$ given by $g(0)=1$, $g(1)=-1$ and $g(n)=0$ otherwise. 
\item[(iii)] As any modulation function is bounded, any modulation is a measure of locally finite support. Some geometric properties of modulated lattices are discussed in \cite[Ex.~3.10]{FR14}.
\end{itemize}
\end{remark}

\subsection{Lattice modulations and deformations}

We argue that modulated lattices are so-called deformed weighted model sets. Within that framework, we have a very good overview of structural properties of the modulated lattice.  In fact, any deformed weighted lattice model set is a modulated lattice. Our arguments use the Bohr compactification of the lattice, see Appendix~\ref{app:gvap}.

\medskip

There is a natural choice for a CPS, which has compact internal space: The triple $(G,H_L,\mathcal L_L)$, where  $H_L:=L_\mathsf{b}$ is the Bohr compactification of $L$, together with the lattice $\mathcal L_L=\{(\ell, i_\mathsf{b}(\ell)):\ell\in L\}$, where $i_\mathsf{b}: L\to L_\mathsf{b}$ is the canonical dense embedding. The lattice is the model set whose window is the whole internal space.

\begin{lemma}\label{lem:Bohrcps}
The above CPS is well-defined.
\end{lemma}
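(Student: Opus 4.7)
The plan is to verify, one at a time, the four defining properties of a cut-and-project scheme for the triple $(G, H_L, \mathcal{L}_L)$. First, I note that $G$ is LCAG by assumption, and $H_L = L_\mathsf{b}$ is a compact abelian group by construction of the Bohr compactification, hence LCAG as well. The canonical map $i_\mathsf{b} : L \to L_\mathsf{b}$ is a continuous group homomorphism with dense image (these are the universal properties of $L_\mathsf{b}$ recalled in the Appendix); moreover $i_\mathsf{b}$ is injective, since every locally compact abelian group (in particular the discrete group $L$) is maximally almost periodic, so its characters separate points.

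The next step is to check that $\mathcal{L}_L = \{(\ell, i_\mathsf{b}(\ell)) : \ell \in L\}$ is a lattice in $G \times H_L$, i.e.\ a discrete co-compact subgroup. That $\mathcal{L}_L$ is a subgroup follows immediately from $i_\mathsf{b}$ being a group homomorphism. For discreteness, use that $L$ is discrete in $G$: there exists an open neighborhood $U$ of $0 \in G$ with $L \cap U = \{0\}$, and then $\mathcal{L}_L \cap (U \times H_L) = \{(0,0)\}$, so $(0,0)$ is isolated in $\mathcal{L}_L$. For co-compactness, pick a relatively compact (measurable) fundamental domain $F \subset G$ for $L$, which exists since $L$ is a lattice in $G$. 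Given any $(g, h) \in G \times H_L$, choose $\ell \in L$ with $g - \ell \in F$; then
\[
(g, h) - (\ell, i_\mathsf{b}(\ell)) \;=\; (g - \ell,\, h - i_\mathsf{b}(\ell)) \;\in\; F \times H_L,
\]
and $F \times H_L$ is relatively compact because $H_L$ is compact. Hence $F \times H_L$ is a relatively compact fundamental domain for $\mathcal{L}_L$.

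It remains to verify the two projection conditions. The canonical projection $\pi^G : G \times H_L \to G$ is injective on $\mathcal{L}_L$ because each element of $\mathcal{L}_L$ is uniquely determined by its first coordinate; equivalently $\mathcal{L}_L \cap (\{0\} \times H_L) = \{(0,0)\}$, which is just injectivity of $i_\mathsf{b}$. Finally, $\pi^{H_L}(\mathcal{L}_L) = i_\mathsf{b}(L)$ is dense in $L_\mathsf{b}$ by the defining density property of the Bohr compactification. Thus all conditions are met and $(G, H_L, \mathcal{L}_L)$ is a well-defined CPS, with $L$ itself realized as the model set $\oplam(H_L)$.

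No substantive obstacle arises; the proof is a bookkeeping check against the definition. The only non-elementary input is the injectivity and density of $i_\mathsf{b}$, which are standard properties of the Bohr compactification of an LCAG and are reviewed in the Appendix. One should just be mindful to use compactness of $H_L$ at the right place (in the co-compactness step) to avoid having to construct a fundamental domain for $\mathcal{L}_L$ directly inside $G \times H_L$.
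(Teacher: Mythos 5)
Your proof is correct and follows essentially the same route as the paper: discreteness via a zero neighborhood $U$ with $L\cap U=\{0\}$, co-compactness from compactness of $H_L$ together with relative denseness of $L$ in $G$, injectivity of $\pi^G|_{\mathcal L_L}$ because the second coordinate is a function of the first, and density of $i_\mathsf{b}(L)$ from the Bohr compactification. One small remark: injectivity of $i_\mathsf{b}$ is not actually needed anywhere (the condition $\mathcal L_L\cap(\{0\}\times H_L)=\{(0,0)\}$ follows merely from $i_\mathsf{b}(0)=0$, and the CPS axioms do not require $\pi^{H_L}$ to be injective on $\mathcal L_L$), so that input can be dropped.
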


\begin{proof}
We argue that $\mathcal L_L$ is a lattice. It is clear that $\mathcal L_L$ is a group. To show uniform discreteness, take a zero neighborhood $U\subset G$ such that $L\cap U=\{0\}$. Then $\mathcal L_L\cap (U\times H_L)=\{0\}$. Indeed, if $(\ell, i_\mathsf{b}(\ell))\in U\times H_L$, then $\ell=0$, which implies that $i_\mathsf{b}(0)=0$ is the neutral element in $H_L$. Also, $\mathcal L_L$ is relatively dense in $G\times H_L$ as $L$ is relatively dense in $G$ and as $H_L$ is compact. Thus $\mathcal L_L$ is a lattice in $G\times H_L$. By construction, $\pi^{H_L}(\mathcal L_L)=i_\mathsf{b}(L)$ is dense in $H_L$. To see that $\pi^G$ is one-to-one on $\mathcal L_L$, consider any $(\ell_1, i_\mathsf{b}(\ell_1))$ and $(\ell_2, i_\mathsf{b}(\ell_2))$ such that $\ell_1=\ell_2$. Then also $i_\mathsf{b}(\ell_1)=i_\mathsf{b}(\ell_2)$.
\qed
\end{proof}

\medskip

We next explain the notion of deformed weighted model set. Whereas the general definition will be analyzed from Section~\ref{Smoothly} onward, in this section we will consider the special case that the underlying model set is a lattice.

\begin{definition}[Deformed weighted model set]\label{def:dwms}
Let $(G,H,\cL)$ be any CPS. Let $f\in C_c(H)$ and $p\in C(H,G)$ be weight and deformation functions and let $L=\pi^G(\cL)$.
\begin{itemize}
\item[(i)] The weighted Dirac comb
\begin{displaymath}
\nu=\sum_{\ell\in L}  f(\ell^\star)\, \delta_{\ell+p(\ell^\star)}
\end{displaymath}
is called a \textit{deformed weighted model set}  in the CPS $(G,H,\cL)$.

\item[(ii)] If $H$ is compact, then $L$ is a lattice in $G$, as $L=\pi^G(\cL\cap (G\times H))$ is both a model set and a group. In that case we speak of a \textit{deformed weighted lattice model set}.
\end{itemize}
\end{definition}

The following three results show that the class of deformed weighted lattice model sets coincides with the class of lattice modulations.

\begin{proposition} \label{stronglyAlmostPeriodic-of-deltaLambda}
Any lattice modulation is a deformed weighted lattice model set in the CPS of Lemma~\ref{lem:Bohrcps}.
\end{proposition}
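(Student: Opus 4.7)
The plan is to identify the modulation functions $w,g$ on the lattice $L$ with their continuous extensions to the Bohr compactification $L_{\mathsf b}=H_L$, and then to recognize the resulting Dirac comb as a deformed weighted lattice model set in the CPS $(G,H_L,\mathcal L_L)$ from Lemma~\ref{lem:Bohrcps}.

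First I would invoke the universal property of the Bohr compactification recorded in Appendix~\ref{app:gvap}: a function $h$ on $L$ taking values in a complete space (here $\mathbb C$, respectively the LCAG $G$) is almost periodic precisely when it admits a continuous extension $\tilde h: L_{\mathsf b}\to \mathbb C$ (resp.\ $\tilde h:L_{\mathsf b}\to G$) through the canonical dense embedding $i_{\mathsf b}: L\to L_{\mathsf b}$. Applying this to the modulation functions $w$ and $g$ yields $f:=\tilde w \in C(L_{\mathsf b},\mathbb C)$ and $p:=\tilde g\in C(L_{\mathsf b},G)$, with $f\circ i_{\mathsf b}=w$ and $p\circ i_{\mathsf b}=g$. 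Since $L_{\mathsf b}=H_L$ is compact, one has $C_c(H_L)=C(H_L)$, so $f$ is automatically of compact support, as required by Definition~\ref{def:dwms}.

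Next I would unwrap the star map of the CPS $(G,H_L,\mathcal L_L)$. By construction $\mathcal L_L=\{(\ell,i_{\mathsf b}(\ell)):\ell\in L\}$, and $\pi^G$ is a bijection from $\mathcal L_L$ onto $L$, so for every $\ell\in L$ the unique element of $H_L$ lifting $\ell$ into $\mathcal L_L$ is $\ell^\star=i_{\mathsf b}(\ell)$. Combining this with the previous paragraph gives the pointwise identities $f(\ell^\star)=w(\ell)$ and $p(\ell^\star)=g(\ell)$ for every $\ell\in L$, from which the equality of Dirac combs
\begin{equation*}
\sum_{\ell\in L} f(\ell^\star)\,\delta_{\ell+p(\ell^\star)}
\;=\;\sum_{\ell\in L} w(\ell)\,\delta_{\ell+g(\ell)}\;=\;\delta_L^{w,g}
\end{equation*}
is immediate. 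This exhibits $\delta_L^{w,g}$ as a deformed weighted lattice model set in the sense of Definition~\ref{def:dwms}(ii).

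The only nontrivial step is justifying the continuous extension of the $G$-valued function $g$ to $L_{\mathsf b}$; this is where one really needs the group-valued version of the Bochner/Bohr characterization, which is precisely why the paper includes Appendix~\ref{app:gvap}. Once that extension is in hand, the rest of the argument is a direct identification of the Dirac comb $\delta_L^{w,g}$ with the deformed model set formula, and both the measure-theoretic subtleties flagged in Remark~\ref{rem:mult}(i) (multiplicities when $\ell+g(\ell)=\ell'+g(\ell')$) and the requirement that $f$ have compact support cause no difficulty thanks to the compactness of $H_L$.
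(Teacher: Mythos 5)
Your argument is correct and is essentially identical to the paper's own proof: both factor the almost periodic modulation functions through the Bohr compactification via Proposition~\ref{prop:BochBohr}, identify $\ell^\star$ with $i_{\mathsf b}(\ell)$ in the CPS of Lemma~\ref{lem:Bohrcps}, and read off the equality of the two Dirac combs. Your additional remarks on $C_c(H_L)=C(H_L)$ and on multiplicities are fine but not needed beyond what the paper already records.
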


\begin{proof}

Use the Bohr compactification $L_\mathsf{b}$ to write $w=w_\mathsf{b}\circ i_\mathsf{b}$ for $w_\mathsf{b}\in C(L_\mathsf{b})$ and $g=g_\mathsf{b}\circ i_\mathsf{b}$ for $g_\mathsf{b}\in C(L_\mathsf{b}, G)$. Define $f\in C_c(H_L)$ by $f=w_\mathsf{b}$, and define $p\in C(H_L,G)$ by $p=g_\mathsf{b}$. We then have
\begin{displaymath}
\sum_{\ell\in L} f(\ell^\star)\,\delta_{\ell+p(\ell^\star)}=\sum_{\ell\in L} w_\mathsf{b}(i_\mathsf{b}(\ell))\,\delta_{\ell+g_\mathsf{b}(i_\mathsf{b}(\ell))}=
\sum_{\ell\in L} w(\ell)\,\delta_{\ell+g(\ell)}\,.
\end{displaymath}
\qed
\end{proof}

\begin{remark}\label{rem:modlat}
The previous proposition states that any modulated lattice is a  deformed weighted model set. As a consequence, any modulated lattice is strongly almost periodic by Theorem~\ref{nu-stronglyAlmostPeriodic}.
\end{remark}

The above CPS is, in some sense, a universal CPS scheme for the lattice $L$. This idea is further elaborated in Section~\ref{sec:MDMS}. Here we note the following result.

\begin{proposition}
Let $(G,H,\cL)$ be a CPS with compact $H$ and lattice $L=\pi^G(\cL)$. Then any deformed weighted lattice model set in $(G,H,\cL)$  is a deformed weighted lattice model set in $(G, H_L, \cL_L)$.
\end{proposition}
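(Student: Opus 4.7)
The plan is to pull everything back via the star map of the original CPS and then extend to the Bohr compactification using its universal property. The pivotal observation is that the star map $(\cdot)^\star \colon L \to H$ associated to the CPS $(G,H,\cL)$ is a \emph{group homomorphism}: if $(x_1,y_1),(x_2,y_2) \in \cL$, then $(x_1+x_2, y_1+y_2) \in \cL$, so $(x_1+x_2)^\star = y_1 + y_2 = x_1^\star + x_2^\star$. Moreover, since $H$ is compact, the lattice $L = \pi^G(\cL)$ is a (genuine) lattice in $G$ by the discussion before the proposition, so $L$ is discrete. Hence $(\cdot)^\star$ is a continuous group homomorphism from the discrete LCAG $L$ into the compact group $H$.

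Next I would invoke the universal property of the Bohr compactification recalled in the Appendix: any continuous group homomorphism from $L$ to a compact group factors uniquely through the canonical embedding $i_\mathsf{b} \colon L \to H_L = L_\mathsf{b}$. Applying this to $(\cdot)^\star$ produces a continuous group homomorphism
\begin{equation*}
\phi \colon H_L \longrightarrow H \quad \text{with} \quad \phi \circ i_\mathsf{b} = (\cdot)^\star.
\end{equation*}

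Given the deformed weighted lattice model set
\begin{equation*}
\nu \, = \, \sum_{\ell \in L} f(\ell^\star)\, \delta_{\ell + p(\ell^\star)}
\end{equation*}
with $f \in C_c(H)$ and $p \in C(H,G)$, I would then set
\begin{equation*}
\tilde{f} := f \circ \phi \in C(H_L) = C_c(H_L), \qquad \tilde{p} := p \circ \phi \in C(H_L, G),
\end{equation*}
where compact support of $\tilde f$ on $H_L$ is automatic because $H_L$ is compact. Substituting and using $\phi(i_\mathsf{b}(\ell)) = \ell^\star$ gives
\begin{equation*}
\sum_{\ell \in L} \tilde{f}(i_\mathsf{b}(\ell))\, \delta_{\ell + \tilde{p}(i_\mathsf{b}(\ell))} \, = \, \sum_{\ell \in L} f(\ell^\star)\, \delta_{\ell + p(\ell^\star)} \, = \, \nu,
\end{equation*}
which is exactly the formula exhibiting $\nu$ as a deformed weighted lattice model set in the CPS $(G, H_L, \cL_L)$ of Lemma~\ref{lem:Bohrcps}.

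The only genuine content is the existence of the factorization $\phi$; once that is in hand the verification is a one-line substitution. So the main conceptual point to highlight in the write-up is the homomorphism property of the star map, which is what allows the universal property of the Bohr compactification to be applied. There is no difficulty with topologies: discreteness of $L$ makes the homomorphism automatically continuous, and compactness of $H_L$ trivializes the requirement $\tilde f \in C_c(H_L)$.
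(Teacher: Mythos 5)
Your proof is correct and follows essentially the same route as the paper: both factor the star map $(\cdot)^\star\colon L\to H$ (a continuous homomorphism into the compact group $H$) through the Bohr compactification via its universal property, obtaining $\phi\colon H_L\to H$, and then pull back $f$ and $p$ along $\phi$. The only difference is that you spell out why the star map is a homomorphism and why continuity is automatic from discreteness of $L$, details the paper leaves implicit.
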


\begin{proof}
Let $f\in C_c(H)$ be the weight function and let $p \in C(H,G)$ be the deformation map.
Since $L= \oplam(H)$, the star map ${}^\star : L \to H$ is a continuous homomorphism from $L$ to the compact group $H$. By the universal property of the Bohr compactification, we may find a continuous group homomorphism $\psi : L_\mathsf{b}=H_L \to H$ such that for all $\ell \in L$ we have $\psi( i_b(\ell))=\ell^\star$. Then $f'=f\circ \psi\in C_c(H_L)$ and $p'=p \circ \psi \in C(H_L, G)$, and in the CPS $(G, H_L, \cL_L)$ we have
\begin{displaymath}
\sum_{\ell \in L} f'(i_\mathsf{b}(\ell)) \, \delta_{\ell+p'(i_\mathsf{b}(\ell))}=\sum_{\ell \in L} f(\psi(i_\mathsf{b}(\ell))) \, \delta_{\ell+p(\psi(i_\mathsf{b}(\ell)))}=\sum_{\ell \in L} f(\ell^\star) \, \delta_{\ell+p(\ell^\star)} \ .
\end{displaymath}
\qed
\end{proof}

\begin{proposition}
Any deformed lattice model set in $(G, H_L, \mathcal L_L)$  is a modulation of $L$.
\end{proposition}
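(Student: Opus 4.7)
The plan is to read off the modulation functions directly from the data of the deformed weighted lattice model set and then verify that these functions are almost periodic using the universal property of the Bohr compactification, which is precisely what distinguishes the CPS $(G, H_L, \mathcal{L}_L)$.

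So let me start from an arbitrary deformed weighted lattice model set in $(G, H_L, \mathcal{L}_L)$, i.e.,
\[
\nu = \sum_{\ell \in L} f(\ell^\star)\, \delta_{\ell + p(\ell^\star)}
\]
with $f \in C_c(H_L) = C(H_L)$ (note $H_L = L_\mathsf{b}$ is compact so the two spaces coincide) and $p \in C(H_L, G)$. Recall that in our CPS the star map is $\ell^\star = i_\mathsf{b}(\ell)$. I would then define
\[
w : L \to \mathbb{C}, \quad w(\ell) = f(i_\mathsf{b}(\ell)), \qquad g : L \to G, \quad g(\ell) = p(i_\mathsf{b}(\ell)),
\]
so that formally $\nu = \delta_L^{w,g}$.

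The remaining task is to verify that $w$ and $g$ are almost periodic. For the scalar-valued function $w$, this is the standard characterisation: a bounded continuous function on $L$ is Bohr almost periodic iff it factors through $i_\mathsf{b}$ as a continuous function on the Bohr compactification $L_\mathsf{b}$. Since $f \in C(L_\mathsf{b})$, the function $w = f \circ i_\mathsf{b}$ is almost periodic. For the group-valued function $g$, I would invoke the corresponding universal property from the Appendix (Proposition~\ref{prop:BochBohr} or its group-valued analogue discussed in Appendix~\ref{app:gvap}): every continuous map from $L_\mathsf{b}$ to a topological group $G$ pulls back along $i_\mathsf{b}$ to a $G$-valued almost periodic function on $L$, and conversely. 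Since $p \in C(H_L, G) = C(L_\mathsf{b}, G)$, the map $g = p \circ i_\mathsf{b}$ is an almost periodic $G$-valued function on $L$.

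Putting this together, $\nu = \sum_{\ell \in L} w(\ell)\, \delta_{\ell + g(\ell)} = \delta_L^{w,g}$, and both modulation functions $w$, $g$ are almost periodic, so $\nu$ is a modulation of $L$ in the sense of Definition~\ref{def:modlat}. The only genuinely non-routine ingredient is the group-valued version of the Bohr-compactification universal property invoked to handle $g$; once that is in place (and it is precisely the content of the appendix), the proof reduces to a one-line substitution. No hypothesis on $\nu$ beyond continuity of $f$ and $p$ is used, so this really is a tautological reversal of Proposition~\ref{stronglyAlmostPeriodic-of-deltaLambda}.
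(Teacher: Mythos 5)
Your proof is correct and follows essentially the same route as the paper's: both define $w=f\circ i_\mathsf{b}$ and $g=p\circ i_\mathsf{b}$ and invoke Proposition~\ref{prop:BochBohr} (which is stated for group-valued functions in Appendix~\ref{app:gvap}, so no extra analogue is needed) to conclude almost periodicity. Nothing further is required.
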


\begin{proof}
Assume that the deformed weighted lattice model set has weight function $f\in C_c(H_L)$ and deformation function $p \in C(H_L, G)$. Then the deformed weighted lattice model set can be written as $\nu=\sum_{\ell\in L} f(\ell^\star)\,\delta_{\ell+p(\ell^\star)}$.  Now $f(\ell^\star)=(f\circ i_\mathsf{b})(\ell)$, and $f\circ i_\mathsf{b}: L\to \mathbb C$ is almost periodic by Proposition~\ref{prop:BochBohr}. Also $p(\ell^\star)=(p\circ i_\mathsf{b})(\ell)$, and $p\circ i_\mathsf{b}: L\to G$ is almost periodic by Proposition~\ref{prop:BochBohr}.
Hence $\nu$ is a modulation of $L$ with modulation functions   $w=f\circ i_\mathsf{b}$ and  $g=p\circ i_\mathsf{b}$.
\qed
\end{proof}

We gather the previous results in the following theorem.

\begin{theorem}
The class of modulated lattices coincides with the class of deformed weighted lattice model sets. \qed
\end{theorem}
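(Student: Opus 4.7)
The plan is to assemble the theorem directly from the three propositions established earlier in this subsection. Two inclusions need to be verified, and each is handled by quoting one or two of those propositions.

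\emph{Modulations are deformed weighted lattice model sets.} This direction is exactly Proposition~\ref{stronglyAlmostPeriodic-of-deltaLambda}, which exhibits any comb $\delta_L^{w,g}$ as a deformed weighted lattice model set in the Bohr CPS $(G,H_L,\cL_L)$ of Lemma~\ref{lem:Bohrcps}. Nothing further needs to be said for this half.

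\emph{Deformed weighted lattice model sets are modulations.} Given a deformed weighted lattice model set $\nu=\sum_{\ell\in L}f(\ell^\star)\,\delta_{\ell+p(\ell^\star)}$ attached to an arbitrary CPS $(G,H,\cL)$ with $H$ compact and $L=\pi^G(\cL)$, I would first apply the penultimate proposition of this subsection to re-present $\nu$ inside the Bohr CPS $(G,H_L,\cL_L)$, with modified weight $f\circ\psi\in C_c(H_L)$ and deformation $p\circ\psi\in C(H_L,G)$, where $\psi:H_L\to H$ is the continuous homomorphism supplied by the universal property of the Bohr compactification and used to factor the star map. The last proposition of the subsection then rewrites $\nu=\delta_L^{w,g}$ with $w=(f\circ\psi)\circ i_\mathsf{b}$ and $g=(p\circ\psi)\circ i_\mathsf{b}$, and Proposition~\ref{prop:BochBohr} from the Appendix guarantees that both $w$ and $g$ are almost periodic, so that $\nu$ is a modulation in the sense of Definition~\ref{def:modlat}.

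Combining the two inclusions yields the theorem. The proof is purely a matter of chaining the three propositions and therefore poses no obstacle; the substantive ingredients, namely the factorisation of the star map through the Bohr compactification and the fact that compositions of continuous functions on $H_L$ with $i_\mathsf{b}$ are almost periodic on $L$, have already been dealt with in those preceding statements.
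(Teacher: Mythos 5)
Your proposal is correct and follows exactly the route the paper takes: the theorem is stated there as a summary of the three preceding propositions, with one inclusion given by Proposition~\ref{stronglyAlmostPeriodic-of-deltaLambda} and the reverse inclusion obtained by first transferring an arbitrary deformed weighted lattice model set into the Bohr CPS $(G,H_L,\cL_L)$ via the factorisation $\psi$ of the star map and then reading it off as a modulation using Proposition~\ref{prop:BochBohr}. Nothing is missing.
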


\medskip

\begin{remark}The above general results have been obtained using the Bohr compactification of the lattice. However when analyzing particular examples, one might construct a ``minimal'' internal space using the modulation function, see the following example. This is in line with the experimental situation, where the number of Miller indices is determined and a torus of appropriate dimension is added to an underlying crystallographic model. This approach has been phrased in terms of model sets in \cite[Sec.~3]{S08}.
\end{remark}

\begin{example}[Sine modulated integers revisited]
Consider the situation of a Dirac comb $\sum_{\ell\in L} \delta_{\ell+g(\ell)}$, i.e., the modulation function $w$ is constant. We will use the group compactification $L_g=\overline{\{T_\ell g^\dag: \ell\in L\}}$ of $L$ by $g^\dag$, where $g^\dag(x)=g(-x)$. Noting that  $\mathcal L_g=\{(\ell, T_\ell g^\dag): \ell\in L\}$ is a lattice in $G\times L_g$, we can consider the CPS $(G,L_g,\mathcal L_g)$. 
Define $f\in C_c(L_g)$ by $f\equiv 1$ and  $p\in C(L_g,G)$ by $p(h)=h(0)$ for $h\in C(L_g,G)$. We then have
\begin{displaymath}
\sum_{\ell\in L} f(\ell^\star) \, \delta_{\ell+p(\ell^\star)}=\sum_{\ell\in L} \delta_{\ell+g(\ell)}\, ,
\end{displaymath}
i.e., the modulated lattice is a deformed weighted lattice model set in the CPS $(G,L_g, \cL_g)$.
This construction was used for sine modulated integers in Section~\ref{sec:hull}.
\end{example}

\subsection{Modulations of modulated lattices}

We show that the class of modulations of a lattice is stable under modulation. We first extend the definition of modulation beyond the lattice case.

\begin{definition}[modulated measure]\label{def:modps} Given $\nu\in \mathcal{M}(G)$, let $w\in C_\mathsf{u}(G)$ and $g\in C_\mathsf{u}(G, G)$ be almost periodic functions. Then the measure $\nu^{w,g}\in\mathcal{M}(G)$, defined by
\begin{displaymath}
\nu^{w,g}(\varphi)=\int w(x) \varphi(x+g(x))\, {\rm d}\nu(x)
\end{displaymath}
for $\varphi\in C_c(G)$, is called a \emph{modulation} of $\nu$. The functions $w, g$ are called the \emph{modulation functions}.
\end{definition}

\begin{remark}
The above indeed extends Definition~\ref{def:modlat} of a modulated lattice. The domain of the modulation functions is chosen to be the group $G$. Almost periodicity is inherited by passing to subgroups, compare Lemma~\ref{almost-periodic-on-G-is-on-L}.
\end{remark}

\begin{proposition}[lattice modulations are modulation stable]\label{prop:mlms}
Let $\delta_L^{w,g}$ be a modulation of $L$, and let $w'\in C_\mathsf{u}(G)$  and $g'\in C_\mathsf{u}(G,G)$ be almost periodic functions. Then $(\delta_L^{w,g})^{w', g'}$ is a modulation of $L$.
\end{proposition}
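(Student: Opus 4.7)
First I would unravel Definition~\ref{def:modps} applied to $\nu = \delta_L^{w,g}$. Since
\[
\delta_L^{w,g}(\varphi) = \sum_{\ell \in L} w(\ell)\, \varphi(\ell + g(\ell))
\]
for $\varphi \in C_c(G)$, substituting into the definition of $\nu^{w',g'}$ gives
\[
(\delta_L^{w,g})^{w',g'} \; = \; \sum_{\ell \in L} w(\ell)\, w'(\ell + g(\ell))\, \delta_{\ell + g(\ell) + g'(\ell + g(\ell))}\,.
\]
Thus the natural candidates for new modulation functions are $\tilde w \colon L \to \mathbb C$ and $\tilde g \colon L \to G$ defined by $\tilde w(\ell) = w(\ell)\, w'(\ell + g(\ell))$ and $\tilde g(\ell) = g(\ell) + g'(\ell + g(\ell))$. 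The task reduces to verifying that $\tilde w$ and $\tilde g$ are almost periodic on $L$, since then $(\delta_L^{w,g})^{w',g'} = \delta_L^{\tilde w, \tilde g}$ is a modulation of $L$ in the sense of Definition~\ref{def:modlat}.

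The plan is to check almost periodicity through the Bohr compactifications. By Proposition~\ref{prop:BochBohr}, I would write $w = w_\mathsf{b} \circ i_\mathsf{b}$ and $g = g_\mathsf{b} \circ i_\mathsf{b}$ for continuous $w_\mathsf{b} \in C(L_\mathsf{b})$ and $g_\mathsf{b} \in C(L_\mathsf{b}, G)$, and analogously $w' = w'_\mathsf{b} \circ i_{G,\mathsf{b}}$ and $g' = g'_\mathsf{b} \circ i_{G,\mathsf{b}}$ with continuous $w'_\mathsf{b} \in C(G_\mathsf{b})$ and $g'_\mathsf{b} \in C(G_\mathsf{b}, G)$. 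The key step is to produce a continuous map $\Psi : L_\mathsf{b} \to G_\mathsf{b}$ realising $\ell + g(\ell)$ on the dense image of $i_\mathsf{b}$. For this, I note that the inclusion $L \hookrightarrow G$ composed with $i_{G,\mathsf{b}}$ is a continuous homomorphism from $L$ into the compact group $G_\mathsf{b}$, so by the universal property of the Bohr compactification it extends uniquely to a continuous homomorphism $\iota_\mathsf{b} : L_\mathsf{b} \to G_\mathsf{b}$ with $\iota_\mathsf{b}(i_\mathsf{b}(\ell)) = i_{G,\mathsf{b}}(\ell)$ for every $\ell \in L$. I can then set
\[
\Psi : L_\mathsf{b} \to G_\mathsf{b}, \qquad \Psi(\eta) := \iota_\mathsf{b}(\eta) + i_{G,\mathsf{b}}\!\bigl(g_\mathsf{b}(\eta)\bigr)\,,
\]
which is continuous and, by construction, satisfies $\Psi(i_\mathsf{b}(\ell)) = i_{G,\mathsf{b}}(\ell + g(\ell))$ for every $\ell \in L$.

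Having $\Psi$, the maps $\tilde w_\mathsf{b} := w_\mathsf{b} \cdot (w'_\mathsf{b} \circ \Psi) \in C(L_\mathsf{b})$ and $\tilde g_\mathsf{b} := g_\mathsf{b} + g'_\mathsf{b} \circ \Psi \in C(L_\mathsf{b}, G)$ are continuous lifts of $\tilde w$ and $\tilde g$ along $i_\mathsf{b}$, so a second application of Proposition~\ref{prop:BochBohr} yields almost periodicity of $\tilde w$ and $\tilde g$, completing the argument. The main obstacle is precisely the construction of the unifying extension $\Psi$: it is what allows $\ell \mapsto w'(\ell + g(\ell))$ and $\ell \mapsto g'(\ell + g(\ell))$ to be handled even though $\ell \mapsto \ell + g(\ell)$ is itself not almost periodic as an $L$-valued map (the identity on $L$ is unbounded). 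Once $\iota_\mathsf{b}$ is identified from the universal property, the remaining sum-and-product steps on $L_\mathsf{b}$ are routine continuity manipulations.
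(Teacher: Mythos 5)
Your proof is correct and follows essentially the same route as the paper: the paper's argument also reduces the claim to almost periodicity of $\ell\mapsto g(\ell)+g'(\ell+g(\ell))$ and $\ell\mapsto w(\ell)\,w'(\ell+g(\ell))$ on $L$, and verifies it by lifting through the Bohr compactifications using the embedding $L_\mathsf{b}\to G_\mathsf{b}$ of Lemma~\ref{rem:com} (your $\iota_\mathsf{b}$, obtained equivalently from the universal property) and Proposition~\ref{prop:BochBohr}. Your explicitly named map $\Psi$ is exactly the expression $i_\mathsf{b}\circ i_L(\ell)+i_G\circ g_\mathsf{b}\circ i_L(\ell)$ appearing inside $g'_\mathsf{b}$ in the paper's computation.
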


\begin{proof}
Note first that by definition we have
\begin{displaymath}
(\delta_L^{w,g})^{w',g'} = \left(\sum_{\ell\in L} w(\ell) \, \delta_{\ell+g(\ell)}\right)^{w',g'}
=\sum_{\ell \in L} w(\ell) w'(\ell+w(\ell)) \, \delta_{\ell+g(\ell)+g'(\ell+g(\ell))} \ .
\end{displaymath}
The statement follows if $g''\in C_\mathsf{u}(L, G)$, given by $g''(\ell)=g(\ell)+g'(\ell+g(\ell))$, and $w''\in C_\mathsf{u}(L)$, given by $w''(\ell)=w(\ell)+w'(\ell+w(\ell))$, are almost periodic on $L$. We show this for $g''$, the argument for $w''$ is analogous. Write $g=g_\mathsf{b}\circ i_L$ for $g_\mathsf{b}\in C(L_\mathsf{b}, G)$ and $g'=g'_\mathsf{b}\circ i_G$ for $g'_\mathsf{b}\in C(G_\mathsf{b}, G)$. Using the notation of Lemma ~\ref{rem:com}, we have
\begin{displaymath}
\begin{split}
g''(\ell)&=g(\ell)+g'(\ell+g(\ell))=g_\mathsf{b}\circ i_L(\ell)+g'_\mathsf{b}\circ i_G(\ell+g_\mathsf{b}\circ i_L(\ell)) \\
&=g_\mathsf{b}\circ i_L (\ell)+g'_\mathsf{b}(i_\mathsf{b}\circ i_L(\ell)+i_G\circ g_\mathsf{b}\circ i_L(\ell))\ .
\end{split}
\end{displaymath}
Together with  Lemma~\ref{rem:com} it follows that $g''=g''_\mathsf{b} \circ i_L$ for some $g''_\mathsf{b}\in C(L_\mathsf{b}, G)$. Hence $g''$ is almost periodic on $L$ by Proposition~\ref{prop:BochBohr}.
\qed
\end{proof}

\section{Deformed weighted model sets }\label{Smoothly}

We give a large class of strongly almost periodic measures arising from model sets.
In general, Dirac combs of model sets are not strongly almost periodic. However weighted model sets \cite{BL} and more general dense Dirac combs \cite{LR} are strongly almost periodic.
Here we show that also weighted versions of deformed model sets \cite{BD,BL2} are strongly almost periodic. This comprises the weighted model sets with continuous weight functions of \cite{BM,LR,Ric,NS11}. Also ideal crystals and their modulations, which are discussed  in Section~\ref{The case of ideal crystals}, fall into this class.

\begin{theorem} \label{nu-stronglyAlmostPeriodic}
Consider the setting of Definition~\ref{def:dwms} (i). Then the following hold.
\begin{itemize}
\item[(i)] The deformed weighted model set $\nu \in \cM^\infty(G)$ is strongly almost periodic.

\item[(ii)] The hull $\Omega(\nu)$ is a compact abelian group, and the canonical  action $\alpha$ is  continuous and onto. In particular,  $(\Omega(\nu), \alpha)$ is minimal, uniquely ergodic with continuous eigenfunctions. The unique ergodic measure is the Haar measure on the group, and the set of continuous eigenfunctions is the group dual to $\Omega(\nu)$.
 \item[(iii)] Consider $\TT=(G\times H)/\cL$ together with canonical $G$-action $\beta$. The topological dynamical system $(\varOmega (\nu),\alpha)$ is a factor of $(\TT,\beta)$ with factor map $ \mu : \TT \to \varOmega(\nu)$ given by
\begin{displaymath}
\mu ([s,k]) (\varphi) =\sum_{\ell\in L} f(\ell^\star+k) \,\varphi (\ell+s + p(\ell^\star+k))
\end{displaymath}
for any $\varphi\in C_c(G)$. In fact $\mu $ is a group homomorphism.

\item[(iv)] The diffraction measure of any $\omega\in \Omega(\nu)$ is given by $\widehat\gamma=\sum_{\chi \in L^0} |a_\chi|^2\delta_\chi$, where
\begin{displaymath}
a_\chi= \mathrm{dens}(\cL) \cdot \int_H \, \overline{\chi^\star(y)} \,\chi(p(y)) f(y) \, {\rm d}m_H(y) \,.
\end{displaymath}
Here $\mathrm{dens}(\cL)$ is the inverse measure of a measurable fundamental domain for the lattice $\cL$. In the $\sigma$-compact case, it coincides with the density of lattice points in $G\times H$.
\end{itemize}
\end{theorem}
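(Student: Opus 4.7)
The plan is to establish (iii) by direct verification of the properties of the explicit factor map $\mu$, to derive (ii) from (iii) by transferring the group structure of $\TT$ to $\Omega(\nu)$, to obtain (i) as an immediate consequence of (ii) via Lemma~\ref{lem:sap}, and finally to compute (iv) by applying the dynamical diffraction formula~\eqref{eq:FBC1} to the torus parametrization.

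For (iii), I first verify that $\mu([s,k])$ defines a translation bounded measure: since $\supp(f)$ is compact, for any $\varphi\in C_c(G)$ only finitely many $\ell\in L$ contribute to $\mu([s,k])(\varphi)$, because $\oplam(\supp(f) - k)$ is uniformly discrete and $p$ is continuous on the compact set $\supp(f)$. Well-definedness on $\TT=(G\times H)/\cL$ follows by substituting $\ell\mapsto \ell-\ell_0$ for $(\ell_0,\ell_0^\star)\in\cL$ and using $(\ell-\ell_0)^\star = \ell^\star-\ell_0^\star$, which leaves both $f(\ell^\star+k)$ and the displacement $\ell+s+p(\ell^\star+k)$ unchanged. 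Vague continuity of $\mu$ follows from uniform continuity of $f$ and $p$ on compacta combined with local finiteness, and equivariance of $\mu$ with the natural $G$-actions is a direct verification from the formula. Since $\mu([0,0])=\nu$ and the $\beta$-orbit $\{[t,0]:t\in G\}$ is dense in $\TT$ (because $L^\star$ is dense in $H$), continuity and compactness yield $\mu(\TT)=\Omega(\nu)$. To see that $\mu$ is a group homomorphism, note that $\ker\mu$ is a closed subgroup and $\mu$ descends to a continuous bijection $\TT/\ker\mu\to\Omega(\nu)$, hence a homeomorphism between compact Hausdorff spaces; this equips $\Omega(\nu)$ with a compact abelian group structure making $\mu$ a homomorphism.

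Parts (i) and (ii) then follow at once. $(\Omega(\nu),\alpha)$ is minimal and uniquely ergodic as a continuous $G$-factor of the minimal uniquely ergodic group rotation $(\TT,\beta)$, and Lemma~\ref{lem:sap} turns the group structure on $\Omega(\nu)$ into strong almost periodicity of $\nu$, which is (i). Continuous eigenfunctions on $\Omega(\nu)$ arise as pull-backs $\lambda\circ\mu$ of those characters $\lambda\in\widehat\TT$ that factor through $\mu$; by the Peter--Weyl theorem applied to the compact abelian group $\Omega(\nu)$, these are total in $L^2(\Omega(\nu),m)$, giving pure point dynamical spectrum. For (iv), identify $\widehat\TT$ with the dual lattice $\cL^0=\{(\chi,\chi^\star):\chi\in L^0\}$ via $\lambda_{(\chi,\chi^\star)}([s,k])=\chi(s)\chi^\star(k)$; by Theorem~\ref{stronglyAlmostPeriodic-implies-purePointSpectrum}(ii) the corresponding eigenvalue of $T$ is $\chi\in\widehat G$. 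Apply~\eqref{eq:FBC1} pulled back to $\TT$, expand $\mu([s,k])(\chi\cdot\psi)$ as a sum over $\ell\in L$, and unfold the combined sum-and-integral against a fundamental domain of $\cL$ by Weil's formula; this turns the integral over $\TT$ into $\mathrm{dens}(\cL)$ times an integral over $G\times H$. The inner integration over $G$, after the translation $s\mapsto s-p(k)$ and using $\overline{\chi(s)}\chi(s+p(k))=\chi(p(k))$ together with $\int_G\psi\dd m_G=1$, collapses to $\chi(p(k))$, leaving the claimed formula for $a_\chi$.

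The chief obstacle I anticipate is the argument in (iii) that $\mu$ descends to a compact-group homomorphism, which rests on identifying $\Omega(\nu)\cong\TT/\ker\mu$ and on verifying that the pulled-back characters are total. The diffraction computation (iv), while notationally involved, is routine once the unfolding against $\cL$ and the subsequent change of variables are set up carefully.
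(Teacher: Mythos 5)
Your proof is correct in substance, but it runs the logical chain in the opposite direction from the paper, and this is worth spelling out. The paper proves (i) \emph{directly}: it estimates $|(\nu*\varphi)(y)-(\nu*\varphi)(y-t)|$ for $t\in\oplam(U)$ and shows that $\oplam(V)$ is a relatively dense set of $\varepsilon$-almost periods of $\nu*\varphi$ for a suitable zero neighbourhood $V\subset H$; parts (ii) and the homomorphism claim in (iii) are then obtained from Lemma~\ref{lem:sap} and Theorem~\ref{stronglyAlmostPeriodic-implies-purePointSpectrum}, with the factor map $\mu$ constructed afterwards. You instead build $\mu$ first, transfer the group structure of $\TT$ to $\Omega(\nu)$ via $\TT/\ker\mu$, and only then invoke Lemma~\ref{lem:sap} to deduce strong almost periodicity. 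Both routes are sound. What the paper's route buys is an explicit, quantitative description of the almost periods of $\nu$ (they contain $\oplam(V)$), proved without any reference to the torus; what your route buys is economy — one continuity-plus-surjectivity verification does all the structural work and exposes the conceptual reason for almost periodicity, namely that $\Omega(\nu)$ is a factor of a group rotation. The one place where you assert more than you prove is the claim that $\ker\mu:=\mu^{-1}(\nu)$ is a closed subgroup whose cosets are exactly the fibres of $\mu$. This is not automatic for a continuous surjection onto a non-group; it needs the observation that $\mu(x)=\mu(x')$ implies $\mu(x+\iota(t))=\mu(x'+\iota(t))$ for all $t\in G$ by equivariance, hence $\mu(x+z)=\mu(x'+z)$ for all $z\in\TT$ by density of $\iota(G)$ and continuity, and in particular $\mu(x-x')=\mu(0)=\nu$. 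With that supplied, your quotient group structure satisfies $\alpha_s\nu\,\dot{+}\,\alpha_t\nu=\alpha_{s+t}\nu$, so Lemma~\ref{lem:sap} applies and yields (i). Your computation of (iv) — identification of $\widehat\TT$ with $\cL^0$, unfolding via Weil's formula, and the collapse of the $G$-integral using $\overline{\chi(x)}\chi(x+p(y))=\chi(p(y))$ and $\int_G\psi\,{\rm d}m_G=1$ — is the same as the paper's.
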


\begin{remark}
\begin{itemize}
\item [(i)] The continuous map $\mu:\TT\to \Omega(\nu)$ may be called the torus parametrization of $\Omega(\nu)$. For undeformed weighted model sets, it has previously been used in \cite{LR}.
\item [(ii)] Consider (unweighted and undeformed) model set Dirac combs. In that case,  continuity of $\mu$ fails in general. A torus parametrization is then often defined as a natural continuous map  $\Omega(\nu)\to\TT$, which is a partial inverse of $\mu$.
On the other hand, $\mu$ is measurable in that case, which is sufficient in order to infer diffraction properties \cite{KR,KR17}.

\item[(iii)] The proofs of parts $(i), (ii), (iii)$ adapt arguments from \cite[Sec.~7]{LR}. With more technical effort, the above setting could be extended to comprise deformations of dense Dirac combs \cite{LR}.

\item[(iv)] The formula for the diffraction measure in $(iv)$ shows that, in comparison to the undeformed model set, the potential Bragg peak positions are unchanged, but their amplitudes are altered. In the Euclidean setting, the above result has been obtained in \cite{BD}, by calculating the autocorrelation of the deformed weighted model set and its transform using Eqn.~\eqref{eq:FBC2}. We will give a proof based on almost periodicity and dynamical systems, which uses Eqn.~\eqref{eq:FBC1}. 

\item[(v)]  If there are no extinctions (i.e., zero amplitudes at potential Bragg peak positions), then the result discussed in (iv) says that both hulls are spectrally isomorphic. In that case both hulls are also measure-theoretically isomorphic, by the Halmos--von Neumann theorem (see e.g.~\cite{Wal} for the case of $\mathbb Z$-actions). Hence dynamical concepts  stronger than measure-theoretic isomorphism are needed in order to distinguish the different types of diffraction.

\item[(vi)] The statements in $(iii)$ and $(iv)$ are also proved in \cite[Thm.~6]{BL2} and in \cite[Sec.~7.1]{Len} by a different method. Diffraction calculations for deformed (unweighted) model sets also appear in \cite{BD,BL2}. In particular, there are explicit calculations for the Fibonacci chain \cite[Sec.~5]{BD} and the silver mean chain \cite[Sec.~7]{BL2}.

\end{itemize}

\end{remark}

\begin{proof}

\noindent (i)
We show that $\nu$ is strongly almost periodic. This implies translation boundedness, compare Section~\ref{Strongly}.
Consider any $\varphi\in C_c(G)$. Note first that for any $y \in G$ and for any $t\in L$ we have by a standard estimate
\begin{displaymath} \label{strongly-almost-periodic-inequality}
|(\nu * \varphi)(y) - (\nu * \varphi)(y-t)|  \le
\sum_{\ell \in L} |f(\ell^{\star})\varphi(y-\ell-p(\ell^{\star})) - f(\ell^{\star} - t^{\star})\varphi(y-\ell-p(\ell^{\star}-t^{\star}))| \,.
\end{displaymath}
Now take any compact zero neighborhood $U$ in $H$ and restrict to $t\in L$ such that $t^\star\in U$, i.e., restrict to $t\in\oplam(U)$. Then we may have non-vanishing summands only if $\ell^\star\in \supp(f) \cup (\supp(f)+U)=:W$, which is a compact set. In addition to the previous condition, we may have non-vanishing summands only if $\ell\in y-p(W)-\supp(\varphi)=:y+K$, which is a compact set. Hence the above sum may be restricted without loss of generality to $\ell\in\oplam(W)\cap (y+K)$, which is a finite set. Using the triangle inequality and standard estimates, we thus get  for any $t \in \oplam(U)$ the estimate
\begin{displaymath}
\begin{split}
|(\nu * \varphi)(y) &- ( \nu * \varphi)(y-t)|
\le  \sum_{\ell \in \oplam(W)\cap (y+K)}  |f(\ell^{\star}) - f(\ell^{\star} - t^{\star})| \,\|\varphi\|_\infty \, +\\
&+ \sum_{\ell \in \oplam(W)\cap (y+K)}  \|f\|_\infty\, |\varphi(y-\ell-p(\ell^{\star})) - \varphi(y-\ell-p(\ell^{\star}-t^{\star}))| \,.
\end{split}
\end{displaymath}
Now consider the first sum in the above estimate. As $\oplam(W)$ is uniformly discrete, the number of terms in the sum is bounded uniformly in $y\in G$. As $f$ is uniformly continuous, this implies that the first sum gets arbitrarily small if $t^\star$ approaches zero, uniformly in $y\in G$. We can argue similarly for the second sum, using that $\varphi$ is uniformly continuous and that $p$ is uniformly continuous when restricted to the compact set $W$. Hence if arbitrary $\varepsilon>0$ is given, we find a zero neighborhood $V\subset U$ such that
\begin{displaymath}
\begin{split}
|(\nu * \varphi)(y) &- ( \nu * \varphi)(y-t)| \le \varepsilon
\end{split}
\end{displaymath}
for all $y\in G$ and for every $t\in \oplam(V)$. As $\oplam(V)$ is relatively dense and as $\varphi$ was arbitrary, this shows that $\nu$ is strongly almost periodic.

\smallskip

\noindent (ii) This follows from (i) and Theorem~\ref{stronglyAlmostPeriodic-implies-purePointSpectrum}.

\smallskip

\noindent (iii) We first show that $\mu:\TT\to \Omega(\nu)$ is well-defined and continuous.  Consider the map $\mu'(s,k):C_c(G)\to \mathbb C$, given by
\begin{displaymath}
\varphi \mapsto \mu'(s,k)=\sum_{\ell\in L} f(\ell^\star+k) \,\varphi (\ell+s + p(\ell^\star+k)) \,.
\end{displaymath}
As $\mu'$ is $\cL$-invariant, the map $\mu$ is well-defined.
The map $\mu'$ is continuous: Let $(\xi_\iota)_{\iota\in I}$ be any net in $G\times H$ such that $\xi_\iota\to (s,k)$. Writing $\xi_\iota=(s_\iota,k_\iota)$, we have $s_\iota\to s$ and $k_\iota\to k$ as the canonical projections are continuous.   We show $\mu'(s_\iota,k_\iota)(\varphi)\to \mu'(s,k)$. As the factor map is continuous, this implies continuity of $\mu$. Choose a compact neighborhood $W_k\subset H$ of $\supp(f)-k$ and choose a compact neighborhood $K_s\subset G$ of $\supp(\varphi)-p(\supp(f))-s$. Then there is $\iota_0 \in I$ such that $\supp(f)-k_\iota\subset W_k$ and $\supp(\varphi)-p(\supp(f))-s_\iota\subset K_s$ for all $\iota > \iota_0$. Using the triangle inequality and standard estimates, we obtain for $\iota > \iota_0$ the estimate
\begin{displaymath}
\begin{split}
|\mu'(s,k)(\varphi)&-\mu'(s_\iota,k_\iota)(\varphi)|\le
 \sum_{\ell \in \oplam(W_k)\cap K_s}  |f(\ell^{\star}+k) - f(\ell^{\star} +k_\iota)| \,\|\varphi\|_\infty \, +\\
&+ \sum_{\ell \in \oplam(W_k)\cap K_s}  \|f\|_\infty\, |\varphi(\ell+s+p(\ell^{\star}+k)) - \varphi(\ell+s_\iota+p(\ell^{\star}+k_\iota))| \,.
\end{split}
\end{displaymath}
As the above sums are finite, the rhs tends to zero with increasing $\iota$ as $f$, $\varphi$ and $p$ are continuous.  This shows continuity of $\mu'$ and of $\mu$, as the projection map is continuous.
By definition, we have $\alpha_t(\mu([s,k]))=\mu(\beta_t([s,k]))$
for every $t\in G$ and every $(s,k)\in G\times H$.
In particular we have
\begin{equation}\label{eq:mu0}
\mu(\beta_t([0,0]))=\alpha_t(\mu([0,0]))=\alpha_t(\mu'(0,0))=\alpha_t\nu\,.
\end{equation}
By minimality of  $\TT$, continuity of $\mu$ and compactness of  $\TT$ we thus have
\begin{displaymath}
\mu(\TT)=\mu(\overline{\{\beta_t([0,0]):t\in G\}})=\overline{\{\alpha_t\nu:t\in G\}}=\Omega(\nu)\,.
\end{displaymath}
We have shown that $\mu:\TT\to\Omega(\nu)$ is continuous, onto and commutes with the translation action. Hence $\mu$ is a factor map.
Let the group homomorphism  $\jmath : G\longrightarrow \Omega (\nu), t\mapsto \alpha_t \nu$ be given by Lemma \ref{lem:sap}. Then Eqn.~\eqref{eq:mu0} gives that $\jmath(t) = \alpha_t \nu = \mu(\iota(t))$. Using Lemma~\ref{lem:sap}, we thus have
\begin{displaymath}
\mu (\iota (t)\dot{+} x) = \jmath(t)\dot{+}  \mu (x) = \mu(\iota(t))\dot{+} \mu (x)
\end{displaymath}
for all $t\in G$ and $x\in \TT$. Thus $\mu$ is a group homomorphism by minimality.

\smallskip

\noindent (iv) By (iii), the diffraction measure $\widehat \gamma$ of $\nu$ is pure point. As the torus para\-metrization map $\mu:\TT\to\Omega(\nu)$ is a factor map, it can be used to embed $L^2(\Omega(\nu))$ isometrically into $L^2(\TT)$, compare \cite[Thm.~1]{BL2}. Hence we can compute the diffraction using Eqn.~\eqref{eq:FBC1}.  The normalized $L^2(\TT)$-eigenfunctions of the translation $\beta$ are given by
 $e_\chi=(\chi, \chi^\star)\in \widehat\TT \sim \cL^0$ and can be parametrized by their eigenvalue $\chi\in L^0$.
Now by an application of Weil's formula \cite[Eqn.~(3.3.10)]{Rei2}, also called the quotient integral formula \cite[Thm.~1.5.2]{DE}, we get
\begin{displaymath}
\begin{split}
a_\chi&=\int_\TT \overline{e_\chi([t,k])} \mu([t,k])(\chi\cdot\varphi) {\rm d}([t,k]) \\
&= \mathrm{dens}(\cL) \cdot\int_G\int_H \overline{e_\chi(x,y)}  f(y) \chi(x+p(y))\varphi(x+p(y)) {\rm d}(x,y)\\
&= \mathrm{dens}(\cL) \cdot\int_H\chi(p(y)) \overline{  \chi^\star(y)}  f(y){\rm d}y \ .
\end{split}
\end{displaymath}
The factor $ \mathrm{dens}(\cL)$ reflects the non-canonical choice of Haar measure on $\TT$.
\qed
\end{proof}

\section{Modulations of deformed weighted model sets}\label{sec:MDMS}

In this section, we will prove that any modulation of a deformed weighted model set is a deformed weighted model set. We will also prove that, in Euclidean space, the class of deformed  weighted model sets coincides with the class of modulated weighted model sets.

\subsection{An extended CPS for modulations} Let $(G, H_0,\mathcal L_0)$ be any CPS. The Bohr compactification of $G$ can be used to expand the internal space $H_0$. In the resulting CPS $(G,H,\mathcal L)$, the lattice projects injectively to $H$. We will use that scheme in order to describe modulations of a deformed weighted model set.

\begin{lemma}[Extended CPS]\label{lem:excp} Let $(G,H_0,\cL_0)$ be any CPS. Then there exists an extended CPS $(G, H, \mathcal L)$ where $\pi^H|_{\mathcal L}$ is one-to-one with the following property:  Every \{weak, deformed, weighted\} model set in $(G,H_0,\mathcal L_0)$ is a \{weak, deformed, weighted\} model set in $(G, H, \mathcal L)$.
\end{lemma}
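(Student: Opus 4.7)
The plan is to enlarge the internal space by a compact factor in which $L$ embeds injectively, using the Bohr compactification $L_\mathsf{b}$ of $L$ (viewed as a discrete group via $L \cong \cL_0$), which comes equipped with an injective homomorphism $i_\mathsf{b}: L \to L_\mathsf{b}$ with dense image. Consider the homomorphism $L \to H_0 \times L_\mathsf{b}$, $\ell \mapsto (\ell^\star, i_\mathsf{b}(\ell))$, call its image $H_L$, set $H := \overline{H_L}$ (a closed subgroup of the LCAG $H_0 \times L_\mathsf{b}$, hence itself an LCAG), and define
\[
\cL := \{(\ell, \ell^\star, i_\mathsf{b}(\ell)) : \ell \in L\} \subset G \times H.
\]

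First I would verify that $(G, H, \cL)$ is a CPS with $\pi^H|_\cL$ one-to-one. Discreteness of $\cL$ in $G \times H$ is inherited from discreteness of $\cL_0$ in $G \times H_0$. For cocompactness, starting from a compact $K_0 \subset G \times H_0$ with $K_0 + \cL_0 = G \times H_0$, the set $(K_0 \times L_\mathsf{b}) \cap (G \times H)$ is compact (using compactness of $L_\mathsf{b}$) and $\cL$-covers $G \times H$. Injectivity of $\pi^G|_\cL$ is inherited from $\cL_0$, injectivity of $\pi^H|_\cL$ follows from injectivity of $i_\mathsf{b}$, and density of $\pi^H(\cL) = H_L$ in $H$ holds by definition.

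Next I would transfer the model set structure. Let $\pi_1: H_0 \times L_\mathsf{b} \to H_0$ denote the first coordinate projection. For a window $W \subset H_0$, set $W' := (W \times L_\mathsf{b}) \cap H$; using $\ell \mapsto (\ell^\star, i_\mathsf{b}(\ell))$ as the star map in $(G,H,\cL)$, the resulting model set equals $\oplam(W)$. Relative compactness of $W'$ is clear from compactness of $L_\mathsf{b}$, and non-empty interior transfers from $W$ to $W'$ once one observes that $\pi_1(H) = H_0$: this projection is continuous with dense image $L^\star$ and is closed as a projection along a compact fibre. For the weighted and deformed cases, pull back $f \in C_c(H_0)$ and $p \in C(H_0, G)$ via $\pi_1|_H : H \to H_0$ to obtain $f' \in C_c(H)$ (compact support preserved since $L_\mathsf{b}$ is compact) and $p' \in C(H, G)$; the resulting deformed weighted Dirac comb in $(G, H, \cL)$ coincides termwise with the original.

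The main subtlety I anticipate is the density requirement for $\pi^H(\cL)$. The naive choice $H = H_0 \times L_\mathsf{b}$ generally fails, since $H_L$ is typically a proper closed subgroup of $H_0 \times L_\mathsf{b}$: non-trivial continuous characters vanishing on $H_L$ arise by pairing characters of $H_0$ with appropriate characters of $L_\mathsf{b}$ coming from the star map. Replacing $H_0 \times L_\mathsf{b}$ by $\overline{H_L}$ resolves this by definition, at the cost of having to re-verify cocompactness of $\cL$ inside the (possibly thinner) group $G \times H$, which is why the intersection construction above is needed.
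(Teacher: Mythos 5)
Your proof is correct and follows the same blueprint as the paper's: adjoin a compact group into which $L$ embeds as a second internal coordinate, take $H$ to be the closure of the projected lattice inside the product (precisely to rescue the density axiom, as you correctly identify), and transfer windows, weights and deformations by pulling back along the coordinate projection onto $H_0$. The one genuine difference is the choice of compact factor: the paper uses the Bohr compactification $G_\mathsf{b}$ of the ambient group $G$ together with the restriction of $i_\mathsf{b}$ to $L$, whereas you use the Bohr compactification $L_\mathsf{b}$ of $L$ equipped with the discrete topology. Both choices give an injective dense homomorphism of $L$ into a compact group, so both prove the lemma as stated, and your verifications (discreteness, cocompactness via $(K_0\times L_\mathsf{b})\cap(G\times H)$, injectivity of both projections, and the transfer of the non-empty-interior condition via closedness of the projection along a compact fibre --- a point the paper does not spell out) are all sound. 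The paper's choice of $G_\mathsf{b}$ is, however, tailored to the subsequent application in Theorem~\ref{thm:ms}: the modulation functions there are almost periodic on $G$ and hence factor through $G_\mathsf{b}$, so the extra coordinate $\pi^{G_\mathsf{b}}(\ell^\star)=i_\mathsf{b}(\ell)$ is exactly what is needed to rewrite a modulation as a deformation. With your $L_\mathsf{b}$ one would first have to invoke the universal property of the Bohr compactification to obtain a continuous homomorphism $L_\mathsf{b}\to G_\mathsf{b}$ extending $\ell\mapsto i_\mathsf{b}(\ell)$; nothing is lost, but the paper's variant makes that later step immediate.
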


\begin{proof}
Consider the Bohr compactification $G_\mathsf{b}$ of $G$ together with the completion homomorphism $i_\mathsf{b}: G \to G_\mathsf{b}$. Note that $i_\mathsf{b}$ is one-to-one, since $G$ is abelian. Setting $H'=H_0\times G_\mathsf{b}$, we extend the star map ${}^{\star|_{H_0}}$ from $H_0$ to $H'$ by defining $\ell^\star=(\ell^{\star|_{H_0}},i_\mathsf{b}(\ell))$ for $\ell\in \pi^G(\mathcal L_0)=L_0$. We claim that the subgroup $\mathcal L=\{(\ell, \ell^\star): \ell\in L_0\}$ of $G\times H'$ is a lattice. To show discreteness of $\mathcal L$ in $G\times H'$, choose open $U_0\subset G\times H_0$ such that $\mathcal L_0\cap U_0=\{0\}$, define $U=U_0\times G_\mathsf{b}$ and consider $(\ell,\ell^\star)\in \mathcal L \cap U$. Then $\ell=0$ due to discreteness of $\mathcal L_0$, which implies $\ell^\star=0$. Since $U$ is non-empty open, it follows that $\mathcal L$ is discrete.
But $\mathcal L$ is also relatively dense in $G \times H'$. To see this, take compact $K_0 \subset G\times H_0$ such that $\mathcal L+K_0=G\times H_0$. Then $K_0\times G_\mathsf{b}$ is compact, and we have
\begin{displaymath}
\mathcal L+(K_0\times G_\mathsf{b}) = (\mathcal L_0+K_0)\times G_\mathsf{b}=G\times H'.
\end{displaymath}
Injectivity of $\pi^G|_{\mathcal L}$ is inherited from injectivity of $\pi^G|_{\mathcal L_0}$.
If we now define $H=\overline{\pi^{H'}(\mathcal L)}\subset H'$, the lattice $\mathcal L$ projects densely to $H$ by definition.  Hence $(G, H,\mathcal L)$ is a CPS. The above formulae can also be used to infer injectivity of $\pi^H$ on $\mathcal L$ from injectivity of $i_\mathsf{b}$.

Now let $\oplam(W_0)$ be a weak model set in $(G,H_0,\cL_0)$. Note that $W=W_0\times G_\mathsf{b}$ is relatively compact and that
\begin{displaymath}
\begin{split}
\oplam(W_0)&=\{ \pi^G(z_0) \,|\, z_0\in \mathcal L_0, \pi^{H_0}(z_0)\in W_0\}\\
&=\{ \pi^G(z) \,|\, z\in \mathcal L, (\pi^{H_0}(z),i_\mathsf{b}(z))\in W_0\times G_\mathsf{b}\}=\oplam(W) \,.
\end{split}
\end{displaymath}
Assume that $\nu=\sum_{\ell\in L_0}f_0(\ell^{\star|_{H_0}}) \delta_{\ell+p_0(\ell^{\star|_{H_0}})}$ is a deformed weighted model set in $(G, H_0,\mathcal L_0)$, where $f_0\in C_c(H_0)$ and $p_0\in C(H_0, G)$. Define $f\in C_c(H)$ by $f(y)=f_0(\pi^{H_0}(y))$ and $p\in C(H,G)$ by $p(y)=p_0(\pi^{H_0}(y))$. As we have $L_0=L=\pi^G(\mathcal L)$, we conclude $\nu=\sum_{\ell\in L}f(\ell^{\star}) \delta_{\ell+p(\ell^\star)}$ is a deformed weighted model set in $(G,H,\cL)$.
\qed
\end{proof}

\subsection{Modulations of deformed weighted model sets}

The class of deformed weighted model sets is stable under modulations.
The argument is analogous to that of Proposition~\ref{prop:mlms}.

\

\begin{theorem}[Deformed weighted model sets are modulation stable]\label{thm:ms}
Let $(G,H,\mathcal L)$ be an extended CPS as in the previous subsection. Then any modulation of a deformed weighted model set in $(G,H,\mathcal L)$ is a deformed weighted model set in $(G,H,\mathcal L)$.
\end{theorem}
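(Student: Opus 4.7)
The plan is to perform the modulation explicitly and then absorb the modulating factors into new weight and deformation functions on $H$, using crucially that in the extended CPS the internal space $H$ carries a $G_\mathsf{b}$-component that records $i_\mathsf{b}(\ell)$.

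Concretely, let $\nu=\sum_{\ell\in L}f(\ell^\star)\delta_{\ell+p(\ell^\star)}$ be a deformed weighted model set in $(G,H,\mathcal L)$ with $f\in C_c(H)$ and $p\in C(H,G)$, and let $w,g$ be almost periodic modulation functions. Unpacking the definition, one gets
\[
\nu^{w,g}=\sum_{\ell\in L}f(\ell^\star)\,w(\ell+p(\ell^\star))\,\delta_{\ell+p(\ell^\star)+g(\ell+p(\ell^\star))}\,.
\]
The task reduces to showing that both $w(\ell+p(\ell^\star))$ and $g(\ell+p(\ell^\star))$ are of the form $F(\ell^\star)$ for suitable continuous $F$ on $H$. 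Once this is achieved, setting $f'(y):=f(y)\,w_{\text{new}}(y)$ and $p'(y):=p(y)+g_{\text{new}}(y)$ immediately exhibits $\nu^{w,g}$ as a deformed weighted model set in $(G,H,\mathcal L)$, with $f'\in C_c(H)$ since it is the product of a compactly supported continuous function with a bounded continuous one.

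For the rewriting, first use Proposition~\ref{prop:BochBohr} of the appendix to obtain continuous $w_\mathsf{b}\in C(G_\mathsf{b})$ and $g_\mathsf{b}\in C(G_\mathsf{b},G)$ with $w=w_\mathsf{b}\circ i_\mathsf{b}$ and $g=g_\mathsf{b}\circ i_\mathsf{b}$. The extended CPS of Lemma~\ref{lem:excp} was built precisely so that $H\subset H_0\times G_\mathsf{b}$ and, for $\ell\in L$, the star element $\ell^\star$ has $G_\mathsf{b}$-component $i_\mathsf{b}(\ell)$. Denoting by $\pi_\mathsf{b}\colon H\to G_\mathsf{b}$ the (continuous) projection, one thus has $i_\mathsf{b}(\ell)=\pi_\mathsf{b}(\ell^\star)$. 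Using that $i_\mathsf{b}$ is a continuous group homomorphism and that $p\colon H\to G$ is continuous, define
\[
\Phi\colon H\longrightarrow G_\mathsf{b},\qquad \Phi(y):=\pi_\mathsf{b}(y)+i_\mathsf{b}(p(y))\,,
\]
which is continuous. Since $i_\mathsf{b}(\ell+p(\ell^\star))=i_\mathsf{b}(\ell)+i_\mathsf{b}(p(\ell^\star))=\Phi(\ell^\star)$, one obtains
\[
w(\ell+p(\ell^\star))=w_\mathsf{b}(\Phi(\ell^\star)),\qquad g(\ell+p(\ell^\star))=g_\mathsf{b}(\Phi(\ell^\star))\,.
\]
Taking $w_{\text{new}}:=w_\mathsf{b}\circ\Phi\in C_\mathsf{b}(H)$ and $g_{\text{new}}:=g_\mathsf{b}\circ\Phi\in C(H,G)$ completes the construction.

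The only real obstacle is the identification of $w(\ell+p(\ell^\star))$ and $g(\ell+p(\ell^\star))$ as continuous functions of $\ell^\star\in H$; this is exactly where one needs $\pi^H|_{\mathcal L}$ to be one-to-one and where the Bohr factor $G_\mathsf{b}$ in the extended internal space is indispensable. In an arbitrary CPS the map $\ell\mapsto\ell^\star$ need not carry enough information to reconstruct $i_\mathsf{b}(\ell)$, so the modulating factors cannot be absorbed into a deformation on the original internal space; the construction of Lemma~\ref{lem:excp} is tailor-made to remove this obstruction. All remaining verifications (continuity, compact support of $f'$, that the new data indeed define a deformed weighted model set in the sense of Definition~\ref{def:dwms}) are direct.
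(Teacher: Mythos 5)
Your proposal is correct and follows essentially the same route as the paper's proof: you extract $w_\mathsf{b},g_\mathsf{b}$ on $G_\mathsf{b}$ via Proposition~\ref{prop:BochBohr}, use that the $G_\mathsf{b}$-component of $\ell^\star$ in the extended CPS equals $i_\mathsf{b}(\ell)$, and absorb the modulation into $f'(y)=f(y)\,w_\mathsf{b}(\pi^{G_\mathsf{b}}(y)+i_\mathsf{b}(p(y)))$ and $p'(y)=p(y)+g_\mathsf{b}(\pi^{G_\mathsf{b}}(y)+i_\mathsf{b}(p(y)))$, which is exactly the paper's construction (your $\Phi$ is just a named version of the map the paper writes inline). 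The concluding remark correctly identifies why the extended internal space is needed.
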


\begin{proof}
Consider $H\subset H'=H_0\times G_\mathsf{b}$, where we use the notation of the previous subsection.
Write $L=\pi^G(\cL)$. Let $\sum_{\ell\in L}f(\ell^\star)\delta_{\ell+p(\ell^\star)}$ be any deformed weighted model set in $(G,H,\mathcal L)$, where $f\in C_c(H)$ and where $p\in C(H,G)$.   Let $w\in C_\mathsf{u}(G)$ and $g \in C_\mathsf{u}(G,G)$ be almost periodic functions for the modulation. Choose $g_\mathsf{b}\in C(G_\mathsf{b},G)$ such that $g=g_\mathsf{b}\circ i_\mathsf{b}$. We then have for $\ell\in L$ the equality
\begin{displaymath}
p(\ell^\star)+g(\ell+p(\ell^\star))=p(\ell^\star)+(g_\mathsf{b}\circ i_\mathsf{b})(\ell+p(\ell^\star)))= p(\ell^\star)+g_\mathsf{b}(\pi^{G_\mathsf{b}}(\ell^\star)+i_\mathsf{b}(p(\ell^\star)))\,.
\end{displaymath}
We can now use the rhs of the above equation to define $p'\in C(H,G)$ by $p'(y)=p(y)+g_\mathsf{b}(\pi^{G_\mathsf{b}}(y)+i_\mathsf{b}(p(y)))$.
Similarly, we choose $w_\mathsf{b}\in C(G_\mathsf{b})$ such that $w=w_\mathsf{b}\circ i_\mathsf{b}$, and we get
\begin{displaymath}
w(\ell+p(\ell^\star))=(w_\mathsf{b}\circ i_\mathsf{b})(\ell+p(\ell^\star)))= w_\mathsf{b}(\pi^{G_\mathsf{b}}(\ell^\star)+i_\mathsf{b}(p(\ell^\star)))\,.
\end{displaymath}
Let us define $f'\in C_c(H)$ by $f'(y)=f(y)\cdot w_\mathsf{b}(\pi^{G_\mathsf{b}}(y)+i_\mathsf{b}(p(y)))$.
We then have
\begin{displaymath}
\sum_{\ell\in L}w(\ell+p(\ell^\star)) f(\ell^\star) \, \delta_{\ell+p(\ell^\star)+g(\ell+p(\ell^\star))}=\sum_{\ell\in L}f'(\ell^\star)\delta_{\ell+p'(\ell^\star)}\,.
\end{displaymath}
\qed
\end{proof}

\subsection{Modulations and deformations}

Specializing Theorem~\ref{thm:ms} to the case $p\equiv0$, we get that modulations of  weighted model sets are deformed weighted model sets.

\begin{corollary}
Let $(G, H_0, \cL_0)$ be any CPS, and let $(G, H, \cL)$ be the extended CPS of Lemma~\ref{lem:excp}, where $H\subset H_0 \times G_\mathsf{b}$. Then for any two almost periodic functions $w\in C_\mathsf{u}(G)$ and $g\in C_\mathsf{u}(G, G)$ there exist $f\in C_c(H)$ and $p \in C(H, G)$ such that
\begin{displaymath}
w(\ell)=f(\ell^\star) \ , \qquad  g(\ell)=p(\ell^\star)
\end{displaymath}
for all $\ell \in \pi^G(\cL)$.
In particular, any modulation of a weighted model set in $(G,H_0, \cL_0)$ is a deformed weighted model set in $(G,H, \cL)$. \qed
\end{corollary}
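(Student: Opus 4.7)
The plan is to construct $f$ and $p$ by pulling the almost periodic data $w$ and $g$ back through the Bohr factorisation, exploiting that the extended CPS of Lemma~\ref{lem:excp} was engineered precisely so that the $G_\mathsf{b}$-coordinate of $\ell^\star$ equals $i_\mathsf{b}(\ell)$. First, by the Bohr compactification property (Proposition~\ref{prop:BochBohr}), the almost periodicity of $w$ and of $g$ yields continuous factorisations $w = w_\mathsf{b} \circ i_\mathsf{b}$ and $g = g_\mathsf{b} \circ i_\mathsf{b}$ for some $w_\mathsf{b} \in C(G_\mathsf{b})$ and $g_\mathsf{b} \in C(G_\mathsf{b}, G)$.

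Let $\pi^{G_\mathsf{b}} : H \to G_\mathsf{b}$ denote the continuous restriction to $H$ of the coordinate projection $H_0 \times G_\mathsf{b} \to G_\mathsf{b}$. I would set $p(y) := g_\mathsf{b}(\pi^{G_\mathsf{b}}(y))$, which lies in $C(H,G)$, and introduce an auxiliary $\tilde f(y) := w_\mathsf{b}(\pi^{G_\mathsf{b}}(y)) \in C(H)$. By the construction of the extended CPS, for every $\ell \in \pi^G(\cL)$ we have $\ell^\star = (\ell^{\star|_{H_0}}, i_\mathsf{b}(\ell))$, hence $\pi^{G_\mathsf{b}}(\ell^\star) = i_\mathsf{b}(\ell)$, so that $\tilde f(\ell^\star) = w(\ell)$ and $p(\ell^\star) = g(\ell)$.

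To obtain a genuinely compactly supported $f$ and to deduce the \emph{in particular} statement, I would now multiply by the weight function of the underlying weighted model set. Given $\omega = \sum_{\ell \in L_0} f_0(\ell^{\star|_{H_0}})\, \delta_\ell$ with $f_0 \in C_c(H_0)$ and $L_0 = \pi^G(\cL_0) = \pi^G(\cL)$, define $f(y) := f_0(\pi^{H_0}(y))\, \tilde f(y)$, where $\pi^{H_0}:H\to H_0$ is the other coordinate projection. Then $f$ is continuous and its support is contained in the compact set $(\supp(f_0) \times G_\mathsf{b}) \cap H$, so $f \in C_c(H)$. A direct computation gives
\begin{displaymath}
\omega^{w,g} \;=\; \sum_{\ell \in L_0} f_0(\ell^{\star|_{H_0}})\, w(\ell)\, \delta_{\ell + g(\ell)} \;=\; \sum_{\ell \in L_0} f(\ell^\star)\, \delta_{\ell + p(\ell^\star)},
\end{displaymath}
exhibiting $\omega^{w,g}$ as a deformed weighted model set in $(G,H,\cL)$. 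Equivalently, the same conclusion follows immediately from Theorem~\ref{thm:ms} applied to the underlying deformed weighted model set with trivial deformation $p \equiv 0$, which is essentially how the corollary is introduced in the text.

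The main subtlety, rather than a deep obstacle, is recognising which ingredient is responsible for compact support: the extension $\tilde f$ coming from $w$ is only continuous on $H$, and since $L_0^\star$ is dense in $H$ it cannot in general be replaced by a compactly supported function while still recovering $w(\ell)$ on $L_0^\star$. The compact support of $f$ must therefore be supplied by the weight function $f_0$ of the weighted model set being modulated, which is exactly the setting in which the final assertion is used.
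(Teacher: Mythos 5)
Your proof is correct and follows essentially the same route as the paper, which obtains the corollary by specializing Theorem~\ref{thm:ms} to $p\equiv 0$: the deformation and the auxiliary weight are pulled back from $G_\mathsf{b}$ through the projection $\pi^{G_\mathsf{b}}$ using $\pi^{G_\mathsf{b}}(\ell^\star)=i_\mathsf{b}(\ell)$, and the compact support is supplied by the weight $f_0$ of the model set being modulated. Your closing observation is also on target: the first display of the corollary cannot literally yield $f\in C_c(H)$ unless $H$ is compact (take $w\equiv 1$ and use density of $L^\star$ in $H$), so the substantive content is the \emph{in particular} clause, which you prove exactly as the paper does.
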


The following result states that for $G=\mathbb R^d$ the converse also holds. Hence in Euclidean space the class of modulated weighted model sets coincides with the class of deformed weighted model sets.

\begin{proposition} Let $(G, H, \cL)$ be any CPS for $G=\RR^d$. Conisder $f\in C_c(H)$ and  $p\in C(H,G)$. Then there exist almost periodic functions $w\in C_\mathsf{u}(G)$ and $g\in C_\mathsf{u}(G, G)$ such that
\begin{displaymath}
w(\ell)=f(\ell^\star) \ , \qquad  g(\ell)=p(\ell^\star)
\end{displaymath}
for all $\ell \in \oplam(W)$ where $W=\mathrm{supp}(f)$. In particular, any deformed weighted model set in $(G,H, \cL)$ is a modulation of a weighted model set in $(G,H, \cL)$.
\end{proposition}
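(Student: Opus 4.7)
The plan is to construct $w$ and $g$ as convolutions of suitable strongly almost periodic measures with a sharply localised bump, in the spirit of Proposition~\ref{stronglyAlmostPeriodic-of-deltaLambda}. Since $W=\supp(f)$ is compact, the weak model set $\oplam(W)$ is uniformly discrete, so I may fix $\phi\in C_c(G)$ with $\phi(0)=1$ and $\supp(\phi)$ small enough that $\phi(\ell-\ell')=0$ for distinct $\ell,\ell'\in\oplam(W)$. The whole argument then consists in expressing $f(\ell^\star)$ and $p(\ell^\star)$ as the convolution of this $\phi$ with a suitable weighted Dirac comb, and in invoking Theorem~\ref{nu-stronglyAlmostPeriodic}\,(i) for almost periodicity.

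For $w$, consider $\mu_f:=\sum_{\ell\in L}f(\ell^\star)\,\delta_\ell$. This is the deformed weighted model set in $(G,H,\cL)$ corresponding to $f$ and $p\equiv 0$, hence strongly almost periodic by Theorem~\ref{nu-stronglyAlmostPeriodic}\,(i). By definition of strong almost periodicity, $w:=\mu_f\ast\phi\in C_\mathsf{u}(G)$ is then almost periodic. Evaluating at $\ell\in\oplam(W)$,
\[
w(\ell)\; =\; \sum_{\ell'\in\oplam(W)} f\big((\ell')^\star\big)\,\phi(\ell-\ell')\; =\; f(\ell^\star)\,\phi(0)\; =\; f(\ell^\star),
\]
since the only surviving term in the sum is $\ell'=\ell$, by the choice of $\phi$.

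Constructing $g$ is where $G=\RR^d$ is genuinely used: I write $p=(p_1,\ldots,p_d)$ with $p_i\in C(H,\RR)$, pick $h\in C_c(H)$ with $h\equiv 1$ on $W$ via Urysohn's lemma for the LCH space $H$, and observe $h\cdot p_i\in C_c(H)$. Applying Theorem~\ref{nu-stronglyAlmostPeriodic}\,(i) to each weighted comb $\mu_i:=\sum_{\ell\in L}(h\cdot p_i)(\ell^\star)\,\delta_\ell$ yields a strongly almost periodic measure, and the same convolution trick gives $g_i:=\mu_i\ast\phi\in C_\mathsf{u}(G)$ almost periodic with $g_i(\ell)=(h\cdot p_i)(\ell^\star)=p_i(\ell^\star)$ for $\ell\in\oplam(W)$. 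Setting $g:=(g_1,\ldots,g_d)$ produces the required $\RR^d$-valued almost periodic function satisfying $g(\ell)=p(\ell^\star)$ on $\oplam(W)$.

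For the ``in particular'' conclusion, modulating the model-set Dirac comb $\delta_{\oplam(W)}$ by $(w,g)$ in the sense of Definition~\ref{def:modps} gives $\sum_{\ell\in\oplam(W)} w(\ell)\,\delta_{\ell+g(\ell)}=\sum_{\ell\in L}f(\ell^\star)\,\delta_{\ell+p(\ell^\star)}$, recovering the original deformed weighted model set. The real obstacle is the $G$-valued construction of $g$: for a general LCAG the componentwise reduction is unavailable, and one would need a vector-valued analogue of Theorem~\ref{nu-stronglyAlmostPeriodic}\,(i) for $G$-valued weighted combs in order to carry through this template. Everything else is a routine application of the almost periodicity already established for scalar weighted model sets.
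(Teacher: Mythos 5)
Your construction is essentially the same function as the paper's --- the paper defines $g(x)=\sum_{\ell}\varphi(x-\ell)\cdot p(\ell^\star)$ over a suitable model set, which is exactly your convolution $\mu\ast\phi$ of a bump with a weighted comb --- but you certify almost periodicity by a genuinely different route. The paper verifies Bohr almost periodicity by hand: it shows that $\oplam(V)$ is a relatively dense set of $\varepsilon$-almost periods for $g$, using uniform continuity of $p$ and a careful choice of neighborhoods $V_0,U_0$ so that translates $\ell\pm t$ stay inside a controlled model set $\oplam(W_0+V_0)$. You instead invoke Theorem~\ref{nu-stronglyAlmostPeriodic}\,(i) (already proved for scalar weighted combs with $p\equiv 0$) together with the very definition of strong almost periodicity, which makes the almost periodicity of $\mu\ast\phi$ immediate and shortens the argument considerably; there is no circularity, since that theorem is established independently. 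Your componentwise reduction $p=(p_1,\dots,p_d)$ is a legitimate alternative to the paper's direct vector-valued estimate with a norm on $\RR^d$ (note that $(g_1,\dots,g_d)$ is almost periodic when each $g_i$ is, e.g.\ via the Bochner or Bohr-compactification characterization of Proposition~\ref{prop:BochBohr}, not by intersecting almost-period sets). Both arguments use $G=\RR^d$ in the same place and both insert the cutoff $h$ to make the weight compactly supported.

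There is one concrete slip in the evaluation step for $g$. You fix $\phi$ so that $\phi(\ell-\ell')=0$ for distinct $\ell,\ell'\in\oplam(W)$, but the comb $\mu_i=\sum_{\ell}(h\cdot p_i)(\ell^\star)\,\delta_\ell$ is supported on $\oplam(\supp(h))$, which is strictly larger than $\oplam(W)$. A point $\ell'\in\oplam(\supp(h))\setminus\oplam(W)$ may lie within $\supp(\phi)$ of some $\ell\in\oplam(W)$, in which case $g_i(\ell)=\sum_{\ell'}(h p_i)((\ell')^\star)\phi(\ell-\ell')$ picks up a spurious term and the identity $g_i(\ell)=p_i(\ell^\star)$ fails. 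The fix is one line --- choose $h$ first and then shrink $\supp(\phi)$ relative to the uniform discreteness radius of $\oplam(\supp(h))$, which is again uniformly discrete because $\supp(h)$ is compact --- and this is precisely why the paper sets up its neighborhoods in the order $W\subset W_0\subset W_0+V_0$ before choosing $\varphi$. Finally, for the ``in particular'' clause it is cleaner to exhibit the deformed weighted model set as the modulation of the weighted model set $\sum_{\ell\in L}f(\ell^\star)\,\delta_\ell$ (with trivial weight modulation and deformation $g$), since $\delta_{\oplam(W)}$ has a discontinuous indicator weight and is therefore not a weighted model set in the sense used in the proposition.
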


\begin{proof}
We construct $g$ by exploiting the $\RR$-vector space structure of $\RR^d$. We may assume without loss of generality that $p: H\to G$ is constant $\vec{0}$ outside a precompact neighborhood $W_0$ of $W$. Indeed, pick some $h \in C_c(H)$ such that $h \equiv 1$ on $W$, and define $p': H \to G$ via $p'(y)=h(y)\cdot p(y)$. Then $p$ vanishes outside the precompact set $W_0= \{ y \in H : h(y) \neq 0 \}$, and we can replace $p$ by $p'$.  In particular, $p$ can be assumed to be uniformly continuous without loss of generality.

Next, take any zero precompact neighborhood $V_0\subset H$ and take any zero neighborhood $U_0$ in $G$ such that the regular model set $\oplam(W_0+V_0)$ is $U_0$-uniformly discrete. This is possible as $W_0+V_0$ is precompact, and hence $\oplam(W_0+V_0)$ is uniformly discrete.

Finally, choose $\varphi\in C_c(G)$ such that $\varphi(\vec{0})=1$, $0\le \varphi \le 1$ and $\mathrm{supp}(\varphi)\subset U_0$ and define $g:G\to G$ by
\begin{displaymath}
g(x)=\sum_{\ell \in \oplam(W_0+V_0)} \varphi(x-\ell) \cdot p(\ell^\star) =\sum_{\ell \in \oplam(W_0)} \varphi(x-\ell) \cdot p(\ell^\star)\ .
\end{displaymath}
For given $x\in G$, the above sum consists of at most one non-zero term. By definition $g$ is uniformly continuous, and we have $g(\ell)=p(\ell^\star)$ for all $\ell\in\oplam(W)$. Also note that $g(x)=\vec{0}$ if $x\notin \oplam(W_0)+U_0$ as $p(y)=\vec{0}$ for all $y \notin W_0$.  We show that $g$ is almost periodic by showing that its almost periods are relatively dense. Fix arbitrary $\varepsilon>0$ and choose any symmetric zero neighborhood $V\subset H$ such that $V\subset V_0$ and $\| p(y)-p(y')\|<\varepsilon$ whenever $y-y'\in V$, which is possible due to uniform continuity of $p$. Here $\|\cdot\|$ is any norm on $\RR^d$.  We show that $\oplam(V)$ is a set of $\varepsilon$-almost periods for $g$, which proves the claim.

Fix any $t\in \oplam(V)$ and assume first $x\in \oplam(W_0)+U_0$. Then $x=\ell+u$ for some $\ell\in \oplam(W_0)$ and some $u\in U_0$. Moreover we have $x-t=\ell-t+u$ and $\ell-t\in \oplam(W_0+V_0)$. Hence we can estimate
\begin{displaymath}
\|g(x-t)-g(x)\|=\|\varphi(u)\cdot p(\ell^\star-t^\star)-\varphi(u)\cdot p(\ell^\star)\| \le \|p(\ell^\star-t^\star)-p(\ell^\star)\|< \varepsilon \ .
\end{displaymath}
Similarly, assume $x-t\in \oplam(W_0)+U_0$. Then $x-t=\ell+u$ for some $\ell\in \oplam(W_0)$ and some $u\in U_0$. Moreover we have $x=\ell+t+u$ and $\ell+t\in \oplam(W_0+V_0)$. Hence we can estimate
\begin{displaymath}
\|g(x-t)-g(x)\|\le \|p(\ell^\star)-p(\ell^\star+t^\star)\|< \varepsilon \ .
\end{displaymath}
We are left with the case $x\notin \oplam(W_0)+U_0$ and $x-t\notin \oplam(W_0)+U_0$. But then $\|g(x-t)-g(x)\|=\|\vec{0}-\vec{0}\|=0$.

The function $w$ can be constructed analogously. In fact, for $w$ the above construction works on general LCAG.
\qed
\end{proof}

Let us summarize the above two results in the following theorem.

\begin{theorem}
Every modulation of a weighted model set is a deformed weighted model set in the extended CPS from Lemma~\ref{lem:excp}. In Euclidean space, the class of deformed weighted model sets coincides with the class of modulated weighted model sets. \qed
\end{theorem}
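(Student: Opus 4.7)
The plan is to assemble the theorem directly from the two results immediately preceding it, both of which have already been proven in this subsection. There is essentially no new content; the theorem is a packaged summary.

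For the first assertion, I would invoke the corollary preceding the proposition. Given a weighted model set $\sum_{\ell\in L_0} w_0(\ell^{\star|_{H_0}})\delta_\ell$ in a CPS $(G,H_0,\cL_0)$ and almost periodic modulation functions $w\in C_\mathsf{u}(G)$ and $g\in C_\mathsf{u}(G,G)$, the corollary supplies $f'\in C_c(H)$ and $p'\in C(H,G)$ in the extended CPS $(G,H,\cL)$ of Lemma~\ref{lem:excp} such that $w(\ell)=f'(\ell^\star)$ and $g(\ell)=p'(\ell^\star)$ for all $\ell \in \pi^G(\cL)=L_0$. Combining the values $f'(\ell^\star)$ with the original weights $w_0(\ell^{\star|_{H_0}})$ (which are already of the form $f_0(\ell^\star)$ in the extended CPS by Lemma~\ref{lem:excp}), the modulated measure takes the form $\sum_{\ell\in L}f(\ell^\star)\delta_{\ell+p(\ell^\star)}$ with appropriate $f\in C_c(H)$ and $p=p'\in C(H,G)$, which is a deformed weighted model set in $(G,H,\cL)$. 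Alternatively, one can simply apply Theorem~\ref{thm:ms} after noting that any weighted model set in $(G,H_0,\cL_0)$ is a deformed weighted model set in $(G,H,\cL)$ with $p\equiv 0$ by Lemma~\ref{lem:excp}.

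For the second assertion, in the Euclidean case $G=\RR^d$, one direction follows from the first assertion: every modulated weighted model set is a deformed weighted model set. The reverse direction is exactly the content of the preceding proposition, which shows that for every deformed weighted model set in a CPS $(G,H,\cL)$ over $G=\RR^d$ with data $f\in C_c(H)$ and $p\in C(H,G)$, there exist almost periodic $w\in C_\mathsf{u}(G)$ and $g\in C_\mathsf{u}(G,G)$ realizing the deformation pointwise on $\oplam(\mathrm{supp}(f))$, so that the deformed weighted model set coincides with a modulation of the weighted model set $\sum_{\ell\in L}f(\ell^\star)\delta_\ell$.

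Since both halves are established, there is no technical obstacle; the only minor bookkeeping is making sure the modulation relation $w(\ell)=f(\ell^\star),\ g(\ell)=p(\ell^\star)$ on $\oplam(\mathrm{supp}(f))$ indeed produces the measure $\sum_{\ell}f(\ell^\star)\delta_{\ell+p(\ell^\star)}$ under Definition~\ref{def:modps}, which it does because outside $\oplam(\mathrm{supp}(f))$ the weight $f(\ell^\star)$ vanishes so the values of $w$ and $g$ there are irrelevant. Thus the proof reduces to \emph{``Combine the preceding corollary and proposition. \qed''}
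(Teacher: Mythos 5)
Your proposal is correct and matches the paper exactly: the paper introduces this theorem with ``Let us summarize the above two results in the following theorem'' and offers no argument beyond citing the preceding corollary (for the first assertion and one inclusion of the second) and the preceding proposition (for the reverse inclusion in $\RR^d$). Your additional remark about the weight $f(\ell^\star)$ vanishing off $\oplam(\mathrm{supp}(f))$ is a harmless bit of bookkeeping the paper leaves implicit.
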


\section{Ideal crystals and their modulations}\label{The case of ideal crystals}

Ideal crystals are mathematical models of physical crystals. Given a physical crystal,  the fundamental problem of crystallography is reconstructing its period group and a decoration of the fundamental domain from diffraction data.

\medskip

In this section we will show that ideal crystals and their modulations have a strongly almost periodic Dirac comb. Ideal crystals are model sets having a discrete internal space. We also show that the class of ideal crystals is stable under so-called commensurate modulation.

\begin{definition}\label{def:idealcrystal}
A locally finite set is an \textit{ideal crystal} if its period group is a lattice.
\end{definition}

The \textit{period group} $P(\Lambda)$ of $\Lambda\subset G$ is given by $P(\Lambda)=\{t\in G: t+\Lambda=\Lambda\}$. The above definition is in line with \cite[Def.~5.1]{KL13} and \cite[Def.~1.3]{Lag}, see also the following lemma. An ideal crystal is called a \textit{crystallographic} point set in \cite[Def.~3.1]{BG2}.

\subsection{Parametrization}\label{sub:para}

The following characterization of ideal crystal is certainly well-known, see e.g.~\cite[Prop.~3.1]{BG2}. We provide the short proof for the reader's convenience, as the argument shows how to construct a torus parametrization for the hull of an ideal crystal.

\begin{lemma}
Let $\Lambda\subset G$ be locally finite. Then $\Lambda$ is an ideal crystal if and only if there exist a lattice $\Gamma$ and a finite set $F$ such that $\Lambda=\Gamma+F$.
\end{lemma}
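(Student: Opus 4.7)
The plan is to prove the two implications separately and directly from the definition of the period group $P(\Lambda) = \{t \in G : t + \Lambda = \Lambda\}$. The $(\Leftarrow)$ direction will be short: given $\Lambda = \Gamma + F$, I verify $\Gamma \subseteq P(\Lambda)$ at once and then upgrade $P(\Lambda)$ to a lattice by combining this cocompactness with discreteness, which will come from local finiteness of $\Lambda$. The $(\Rightarrow)$ direction will exploit a relatively compact Borel fundamental domain for the lattice $\Gamma := P(\Lambda)$ in order to choose a finite transversal for the $\Gamma$-orbits in $\Lambda$.

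For $(\Leftarrow)$, suppose $\Lambda = \Gamma + F$ with $\Gamma$ a lattice and $F$ a nonempty finite set. The identity $\gamma + \Lambda = \gamma + \Gamma + F = \Gamma + F = \Lambda$ for every $\gamma \in \Gamma$ yields $\Gamma \subseteq P(\Lambda)$, so $P(\Lambda)$ is relatively dense. For discreteness I would fix some $\lambda \in \Lambda$ and argue that any net $t_\iota \to 0$ in $P(\Lambda)$ forces $\lambda + t_\iota \to \lambda$ inside the locally finite set $\Lambda$, so $t_\iota = 0$ eventually. Hence $P(\Lambda)$ is both discrete and cocompact, i.e.\ a lattice, and $\Lambda$ is an ideal crystal.

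For $(\Rightarrow)$, assume $\Gamma := P(\Lambda)$ is a lattice. Since $G/\Gamma$ is compact, I pick a relatively compact Borel fundamental domain $D \subset G$ for the $\Gamma$-action and set $F := \Lambda \cap D$. Local finiteness of $\Lambda$ together with precompactness of $D$ makes $F$ finite. The inclusion $\Gamma + F \subseteq \Lambda$ is immediate from $\Gamma \subseteq P(\Lambda)$ and $F \subseteq \Lambda$. Conversely, any $\lambda \in \Lambda$ decomposes uniquely as $\lambda = \gamma + d$ with $\gamma \in \Gamma$ and $d \in D$; $\Gamma$-invariance of $\Lambda$ then gives $d = \lambda - \gamma \in \Lambda$, so $d \in F$ and $\lambda \in \Gamma + F$.

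The main obstacle is really only a technical one: the existence of a relatively compact Borel fundamental domain for a cocompact discrete subgroup of a general LCA group. This is classical, obtained by covering $G$ with $\Gamma$-translates of a precompact open set and extracting a Borel transversal, but it is the one point where some care is needed in the general setting. A minor bookkeeping issue is the empty $\Lambda$, for which $P(\Lambda) = G$ need not be a lattice; this I would handle by convention (taking $F = \varnothing$) or by noting it separately.
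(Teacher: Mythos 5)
Your proof is correct and follows essentially the same route as the paper: the same fundamental-domain construction $F=\Lambda\cap D$ for the forward direction, and the same observation $\Gamma\subseteq P(\Lambda)$ giving relative denseness in the converse. The only (harmless) deviation is that you obtain discreteness of $P(\Lambda)$ directly from local finiteness of $\Lambda$ at a fixed point, whereas the paper uses the containment $P(\Lambda)\subseteq\Gamma+F-F$; both work (and note that for your transversal argument no Borel structure is actually needed, a set of coset representatives inside a compact set suffices), while your remark on the empty-$\Lambda$ edge case flags something the paper also implicitly sidesteps.
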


\begin{proof}
``$\Rightarrow$'':
Assume that the period group $P$ of $\Lambda$ is a lattice. Let $D$ be any relatively compact fundamental domain of $P$. Choosing representatives within $D$, we get
\begin{displaymath}
\Lambda=\bigcup_{\lambda\in\Lambda}(P+\lambda)=
\bigcup_{\lambda\in\Lambda\cap D}(P+\lambda)=P+(\Lambda\cap D)
\end{displaymath}
As $\Lambda\cap D$ is a finite set due to local finiteness of $\Lambda$, this shows $\Lambda=\Gamma+F$ with $\Gamma=P$ and $F=\Lambda\cap D$.

\noindent ``$\Leftarrow$'':  Note that the period group $P$ of $\Lambda=\Gamma+F$ satisfies $\Gamma \subset P\subset \Gamma+F-F$. Hence the group $P$ is both discrete and relatively dense. This means that $P$ is a lattice. In particular $\Lambda$ is an ideal crystal.
\qed
\end{proof}

\begin{remark}\label{rem:dsd}
The decomposition $\Lambda =\Gamma+F$ may not be unique. A simple example is $\Lambda=\ZZ+\{0,1/2\}=1/2\cdot \ZZ$, which satisfies $\Lambda=P(\Lambda)$.
In general, we  have $\Gamma\subset P(\Lambda)$, and the above proof shows that we may choose $\Gamma=P(\Lambda)$ and $F=\Lambda \cap D$, where $D$ is a relatively compact fundamental domain of $P(\Lambda)$ in $G$. In that case, $\Lambda$ is uniquely parametrized by $\Gamma$ and $F$, i.e., we have the direct sum decomposition $\Lambda=\Gamma\oplus F$, compare \cite[Prop.~3.1]{BG2}. That decomposition is canonical from an experimental viewpoint.
\end{remark}

It follows from the previous remark that we can parametrize the hull of $\Lambda$ by the compact group $G/\Lambda(G)$. Indeed, $x+P(\Lambda)\mapsto x+\Lambda$ provides a homeomorphism which commutes with the natural shift action. Thus $\delta_\Lambda$ is strongly almost periodic by Lemma~\ref{lem:sap}, and the Bragg peaks in the diffraction spectrum are indexed by the dual of $P(\Lambda)$.

\smallskip

Strong almost periodicity of ideal crystals was already proved in \cite[Cor.~5.6]{KL13} by a different method. That reference also gives a partial converse. Recall that $\Lambda \subset G$ has finite local complexity if $\Lambda-\Lambda$ is uniformly discrete.

\begin{proposition}\cite[Cor.~5.6]{KL13}\label{prop:charidcryst}
Let $\Lambda$ be a Delone set in a compactly generated LCAG $G$. If $\Lambda$ has finite local complexity and $\delta_\Lambda$ is strongly almost periodic, then $\Lambda$ is an ideal crystal.
\qed
\end{proposition}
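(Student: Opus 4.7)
The plan is to prove that $P(\Lambda)$ is a lattice, i.e., both discrete and relatively dense in $G$. For discreteness, observe that for any fixed $\lambda_0 \in \Lambda$ the map $t \mapsto t + \lambda_0$ injects $P(\Lambda)$ into $\Lambda$, so $P(\Lambda)$ inherits uniform discreteness from the Delone set $\Lambda$ without further work. The substantive content is relative density, and the strategy is to use finite local complexity as a rigidity device that upgrades a sufficiently accurate almost period of an appropriate convolution of $\delta_\Lambda$ into a genuine period of $\Lambda$.

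Concretely, I would choose a bump function $\varphi \in C_c(G)$, nonnegative, with $\varphi(0)=1$, supported in a symmetric zero neighborhood so small that (i) the translates $\lambda + \supp(\varphi)$ are pairwise disjoint over $\lambda \in \Lambda$, and (ii) the open neighborhood $U_\varphi := \{x : \varphi(x) > 1/2\}$ satisfies $U_\varphi - U_\varphi \subset W$, where $W$ is the symmetric zero neighborhood witnessing the uniform discreteness of $\Lambda - \Lambda$ furnished by FLC. Then $f := \delta_\Lambda * \varphi$ is a sum of pairwise disjoint bumps of height $1$, and $f \in C_\mathsf{u}(G)$ is almost periodic by the SAP hypothesis on $\delta_\Lambda$.

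The heart of the argument is the following rigidity claim: every $\varepsilon$-almost period $t$ of $f$, with $\varepsilon < 1/2$, lies in $P(\Lambda) + U_\varphi$. Evaluating $|f(\lambda) - f(\lambda - t)| < \varepsilon$ at $\lambda \in \Lambda$ and using $f(\lambda)=1$ forces $f(\lambda - t) > 1/2$, so by disjointness of the bumps there is a unique $\lambda' \in \Lambda$ with $\lambda - t - \lambda' \in U_\varphi$. As $\lambda$ varies, the differences $\lambda - \lambda'$ all lie in $(\Lambda - \Lambda) \cap (t + U_\varphi)$; any two such elements differ by an element of $U_\varphi - U_\varphi \subset W$ and therefore coincide by FLC, yielding a single $d \in \Lambda - \Lambda$ with $\lambda - d \in \Lambda$ for every $\lambda \in \Lambda$. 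The symmetric evaluation at $\lambda + t$ produces the same $d$ with $\lambda + d \in \Lambda$, so $d \in P(\Lambda)$; and $d \in t + U_\varphi$ gives $t \in P(\Lambda) + U_\varphi$.

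To finish, SAP guarantees (via Proposition~\ref{prop:BochBohr}) that the $\varepsilon$-almost periods of $f$ are relatively dense in $G$, so $P(\Lambda) + U_\varphi$ is relatively dense; since $U_\varphi$ is relatively compact, $P(\Lambda)$ itself is relatively dense, and combined with discreteness this makes $P(\Lambda)$ a lattice, i.e., $\Lambda$ is an ideal crystal in the sense of Definition~\ref{def:idealcrystal}. The main obstacle I anticipate is the rigidity step: one must simultaneously juggle disjointness of bumps, smallness of $U_\varphi - U_\varphi$ relative to the FLC radius, and smallness of $\varepsilon$, and keep track of the fact that the \emph{same} $d$ arises from the two symmetric evaluations so that both inclusions $\Lambda \pm d \subset \Lambda$ hold. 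Compact generation of $G$ does not appear to enter this proof sketch and presumably plays only a technical role in the original source \cite{KL13}.
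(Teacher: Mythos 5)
Your argument is correct, but note that the paper does not prove this statement at all: it is quoted verbatim from \cite[Cor.~5.6]{KL13}, whose proof runs through the theory of equicontinuous Delone dynamical systems (the hull of an FLC Delone set with strongly almost periodic Dirac comb is an equicontinuous, hence group-like, system, and FLC then forces the orbit map to be locally constant in the internal direction). Your route is a direct harmonic-analytic rigidity argument that avoids the dynamical machinery entirely, and it does go through: discreteness of $P(\Lambda)$ via the injection $t\mapsto t+\lambda_0$ is fine, the bump function $f=\delta_\Lambda\ast\varphi$ with disjoint translates of $\supp(\varphi)$ is legitimate, and the key step --- that an $\varepsilon$-almost period $t$ with $\varepsilon<1/2$ selects, for each $\lambda$, a unique $\lambda'$ with $\lambda-t-\lambda'\in U_\varphi$, and that FLC collapses all the resulting differences $\lambda-\lambda'$ to a single $d$ --- is exactly the right rigidity mechanism. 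Two details deserve to be made explicit. First, to conclude that the two evaluations (at $\lambda$ and at $\lambda+t$) produce the \emph{same} $d$, you should either take $\varphi$ symmetric or additionally require $U_\varphi+U_\varphi\subset W$: the element $d$ from the first evaluation lies in $t+U_\varphi$ while the element $e$ from the second lies in $t-U_\varphi$, so $d-e\in U_\varphi+U_\varphi$, and only then does uniform discreteness of $\Lambda-\Lambda$ force $d=e$; both one-sided inclusions $\Lambda\pm d\subset\Lambda$ are genuinely needed before $d+\Lambda=\Lambda$ follows, so this is not cosmetic. Second, your observation about compact generation is accurate: with the paper's definition of FLC as uniform discreteness of $\Lambda-\Lambda$, your proof works in an arbitrary LCAG; compact generation in \cite{KL13} is tied to their structural analysis of the hull rather than to this implication. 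What your approach buys is a short, self-contained and slightly more general proof; what the cited approach buys is that it sits inside a complete classification of equicontinuous Delone systems.
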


\subsection{Ideal crystals and model sets}

Ideal crystals can be characterized as model sets having discrete internal space. The parametrization as a model set can be used to compute the diffraction of an ideal crystal by the diffraction formula for model sets.

\begin{proposition}[Ideal crystals and model sets]\label{lem:icasms}

\begin{itemize}
\item[(i)] Any model set from a CPS with discrete internal space is an ideal crystal.
\item[(ii)] Any ideal crystal is a model set in some CPS with discrete internal space.
\end{itemize}
\end{proposition}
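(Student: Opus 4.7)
For part (i), the plan is as follows. Let $(G,H,\mathcal{L})$ be a CPS with $H$ discrete, and let $\oplam(W)$ be a model set in this scheme. Because $L^\star = \pi^H(\mathcal{L})$ is dense in the discrete group $H$, we have $L^\star = H$; and because $W \subset H$ is relatively compact in a discrete space, $W$ is finite. I would introduce $\Gamma := \{\ell \in L : \ell^\star = 0\} = \oplam(\{0\})$ and show that $\Gamma$ is a lattice in $G$. Discreteness is immediate: $\mathcal{L} \cap (G \times \{0\})$ is a discrete subset of $G\times H$ (since $\{0\}$ is open in the discrete group $H$), and $\pi^G$ is injective on $\mathcal{L}$. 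For relative density, fix a compact $K \subset G \times H$ with $\mathcal{L} + K = G \times H$; after enlarging $K$ to $\pi^G(K) \times \pi^H(K)$, I may assume $K = K_G \times K_H$ with $K_H \subset H$ finite. For each $h \in K_H$ pick some $\ell_h \in L$ with $\ell_h^\star = -h$, possible because $L^\star = H$; a short calculation then shows $G \subset \Gamma + K_G + \{\ell_h : h \in K_H\}$. Finally, for each $w \in W$ choose $x_w \in L$ with $x_w^\star = w$. Then
\[
 \oplam(W) \;=\; \bigcup_{w\in W}(x_w + \Gamma) \;=\; \Gamma + F,
\]
with $F = \{x_w : w \in W\}$ finite, so $\oplam(W)$ is an ideal crystal by the lemma following Definition~\ref{def:idealcrystal}.

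For part (ii), let $\Lambda$ be an ideal crystal. By that same lemma, write $\Lambda = \Gamma + F$ with $\Gamma$ a lattice in $G$ and $F \subset G$ finite. Let $\pi : G \to G/\Gamma$ denote the canonical projection, and set $H := \langle \pi(F) \rangle$, equipped with the \emph{discrete} topology (as an abstract subgroup of $G/\Gamma$, not with the subspace topology). Define $L := \pi^{-1}(H)$ and
\[
 \mathcal{L} := \{(\ell, \pi(\ell)) : \ell \in L\} \subset G \times H.
\]
I would verify the CPS axioms: $\mathcal{L}$ is a subgroup by construction; discreteness follows by choosing an open zero neighborhood $U \subset G$ with $U \cap \Gamma = \{0\}$, so that $(\ell, 0) \in (U \times \{0\}) \cap \mathcal{L}$ forces $\ell \in \Gamma \cap U = \{0\}$; cocompactness follows because $\mathcal{L} + (K \times \{0\}) = G \times H$ for any compact fundamental domain $K \subset G$ of $\Gamma$, using the easy fact that $\pi|_L : L \to H$ is surjective; injectivity of $\pi^G|_{\mathcal L}$ is built in, and $\pi^H(\mathcal{L}) = H$ is trivially dense in $H$. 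With the induced star map $\ell \mapsto \pi(\ell)$ and the finite window $W := \pi(F) \subset H$, one then verifies
\[
 \oplam(W) \;=\; \{\ell \in L : \pi(\ell) \in \pi(F)\} \;=\; \Gamma + F \;=\; \Lambda.
\]
Since $W$ is finite in a discrete group, it is relatively compact with non-empty interior, so this exhibits $\Lambda$ as a model set in the CPS $(G, H, \mathcal{L})$ with discrete internal space.

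The main technical point, and what I expect to be the principal obstacle, is the relative density of $\Gamma$ in part (i): one must exploit cocompactness of $\mathcal{L}$ in $G \times H$ \emph{together with} discreteness of $H$ to reduce a cocompact fundamental domain to a finite shift in the $H$-direction and convert it into a finite $L$-shift that can be absorbed into the relatively dense set $\Gamma + K_G$. Once this is in place, both parts reduce to careful bookkeeping with the CPS axioms and the description $\Lambda = \Gamma + F$ of ideal crystals.
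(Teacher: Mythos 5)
Your proposal is correct, and both parts follow the same skeleton as the paper's proof ($\Gamma=\oplam(\{0\})$ in (i), a quotient by $\Gamma$ as discrete internal space in (ii)), but the details diverge in ways worth noting. In (i), the paper disposes of the latticehood of $\Gamma$ in one line by observing that $\{0\}$ is a compact open window in discrete $H$, so $\oplam(\{0\})$ is a model set, hence Delone, hence a lattice; it then concludes by sandwiching the period group, $\Gamma\subset P(\oplam(W))\subset \oplam(W)-\oplam(W)$, and invoking Definition~\ref{def:idealcrystal} directly. You instead prove relative density of $\Gamma$ by hand from cocompactness of $\cL$ (your reduction of the fundamental domain to a finite $H$-shift and its conversion into finitely many $L$-translates is correct), and you conclude via the decomposition $\oplam(W)=\Gamma+F$ and the characterization lemma. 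Your route is more elementary and self-contained; the paper's is shorter because it reuses general model-set facts already available. In (ii), the paper takes the full quotient $H=G/\Gamma$ with the discrete topology and $\cL=\{(x,\overline{x}):x\in G\}$, so $L=G$; you take the finitely generated subgroup $H=\langle\pi(F)\rangle$ and $L=\pi^{-1}(H)$. Both satisfy the CPS axioms and yield $\oplam(\pi(F))=\Gamma+F=\Lambda$; your internal space has the mild advantage of being countable (hence $\sigma$-compact), whereas the paper's is independent of the decoration $F$ and requires no choice of generators. Neither difference affects correctness.
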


\begin{proof}

``(i)'' Consider any CPS $(G,H,\cL)$ with discrete $H$ and relatively compact non-empty window $W\subset H$.
The group $\Gamma= \oplam(\{ 0\})$ is a model set as $H$ is discrete. Thus $\Gamma$ is a Delone set and hence a  lattice in $G$. Moreover, for all $t \in \Gamma$ we have
\[
t+\oplam(W)=\oplam(t^\star+W)=\oplam(0+W)=\oplam(W) \,.
\]
Therefore $\Gamma \subset P(\oplam(W)) \subset \oplam(W)-\oplam(W)$. This shows that $ P(\oplam(W))$ is relatively dense and uniformly discrete, thus a lattice. As $\oplam(W)$ is locally finite, this shows that $\oplam(W)$ is an ideal crystal.

``(ii)''
Assume that $\Lambda=\Gamma+F$ for some lattice $\Gamma$ and some non-empty finite set $F$.
Consider the group $H = G/\Gamma$ equipped with the discrete topology and let
$\cL= \{ (x, \overline{x}) : x \in G \}$, where $\overline{x}= x+\Gamma$.  We claim that $(G, H, \cL)$ is a CPS.

Let us first prove that the group $\cL$ is a lattice.
The group $\cL$ is discrete: Since $\Gamma$ is a lattice in $G$, we can find an open zero neighborhood $U \subset G$ such that $\Gamma \cap U =\{ 0\}$. Since $\{ \overline{0} \}$ is open in $H$ by discreteness of $H$, the set $U \times \{ \overline{0} \}$ is an open zero neighborhood in $G \times H$. Now consider any point  $(y, \overline{y}) \in \cL \cap (U \times \{ \overline{0} \})$. Then $\overline{y}=\overline{0}$, which implies $y \in \Gamma$. Thus $y \in \Gamma \cap U$, which implies $y=0$. Thus $\cL$ is discrete.
The group $\cL$ is relatively dense: Let $K \subset G$ be compact such that $\Gamma +K =G$.
Let $(x, \overline{y}) \in G \times H$ be arbitrary. We can write $x = z_1+ k_1, y=z_2+k_2$ for some $z_i \in \Gamma$ and $ k_i\in K$. Note that $\overline{x}=\overline{k_1}$ and $\overline{y}=\overline{k_2}$. Then
\begin{displaymath}
(x, \overline{y})= (z_1+k_1, \overline{k_2})= (z_1+k_2, \overline{z_1+k_2}) +(k_1-k_2, \overline{0}) \in \cL + \left((K-K) \times \{\overline 0 \}\right) \ .
\end{displaymath}
This shows that $\cL$ is relatively dense. We have shown that $\cL$ is a lattice.

It is obvious from the definition of $\cL$ that $\pi^G|_\cL$ is one-to-one, and that $\pi^H(\cL)=H$ is dense in $H$. Hence $(G,H,\cL)$ is a CPS.

Now let $W=\overline{F}$. Then $W$ is compact as $F$ is finite. Furthermore we have $\oplam(W)=\{x\in G: \overline{x}\in\overline{F}\}=F+\Gamma$.
\qed
\end{proof}

Consider an ideal crystal $\Lambda=\Gamma+F=\oplam(W)$ as in the proof of (ii). Then $P(\Lambda)^\star=P_W$, with $P_W=\{h\in H: h+W=W\}$ the period group of the window. As a simple consequence $P(\Lambda)=\Gamma$ if and only if the window is aperiodic, i.e., $P_W=\{\overline{0}\}$. Hence any ideal crystal can be realized as a model set with aperiodic window.
In that case, the torus parametrization of Theorem~\ref{nu-stronglyAlmostPeriodic} is an isomorphism by \cite[Prop.~13.2]{LR}, which is in line with the results from Section~\ref{sub:para}. Injectivity properties of the torus parametrization in the case of nonzero window periods are discussed in \cite{KR17}.

\subsection{Modulations and deformations of ideal crystals}

As any ideal crystal is a model set, we can consider weighted and deformed variants of it. As the internal space can be chosen to be discrete, the following result is readily inferred from Section~\ref{sec:MDMS}.

\begin{theorem}
Let $\Lambda$ be an ideal crystal, described as a model set in the extended CPS $(G,H, \cL)$ from Lemma~\ref{lem:excp}. Then any modulation of $\Lambda$ is a deformed weighted variant of  the model set $\Lambda$ in $(G,H,\cL)$.  In particular,  any modulation of $\Lambda$ is strongly almost periodic.    In Euclidean space, the class of modulations of $\Lambda$ coincides with the class of deformed weighted variants of the model set $\Lambda$ in $(G,H, \cL)$. The classes of deformed weighted model sets in $(G,H,\cL)$ and of modulations of $\Lambda$ are both modulation stable. \qed
\end{theorem}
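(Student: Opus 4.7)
The plan is to string together the preceding structural results. First I would realise $\Lambda=\oplam(W_0)$ as a model set in some CPS $(G,H_0,\cL_0)$ with discrete internal space $H_0$ and finite window $W_0$, via Proposition~\ref{lem:icasms}(ii), and then pass to the extended CPS $(G,H,\cL)$ produced by Lemma~\ref{lem:excp}. Under that extension, the Dirac comb $\delta_\Lambda$ appears as a weighted model set in $(G,H,\cL)$ with weight function $f=\mathbf{1}_W\in C_c(H)$; here $f$ is continuous and compactly supported because $W_0$ is clopen in the discrete group $H_0$, so that the corresponding window $W\subset H$ is both clopen and compact.

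Given this identification, the first assertion of the theorem is an immediate consequence of the corollary following Theorem~\ref{thm:ms}: any modulation of a weighted model set in $(G,H_0,\cL_0)$ is a deformed weighted model set in the extended CPS $(G,H,\cL)$. Strong almost periodicity of any such modulation then follows at once from Theorem~\ref{nu-stronglyAlmostPeriodic}(i). For the Euclidean coincidence I would invoke the converse proposition in Section~\ref{sec:MDMS}, which realises every deformed weighted model set in $(\RR^d,H,\cL)$ as a modulation of a weighted model set, and specialise it to deformations of $\Lambda$ in order to obtain the reverse inclusion.

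Modulation stability of the class of deformed weighted model sets is precisely the content of Theorem~\ref{thm:ms}. For modulation stability of the class of modulations of $\Lambda$, I would argue in direct analogy with Proposition~\ref{prop:mlms}: a successive modulation $(\delta_\Lambda^{w_1,g_1})^{w_2,g_2}$ equals $\delta_\Lambda^{w,g}$ with $w(x)=w_1(x)\cdot w_2(x+g_1(x))$ and $g(x)=g_1(x)+g_2(x+g_1(x))$. The one non-trivial step, which I expect to be the main obstacle, is verifying that these composite functions remain almost periodic on $G$; I would handle it by lifting each $w_i$ and $g_i$ through the Bohr compactification $i_\mathsf{b}\colon G\to G_\mathsf{b}$ and then applying the composition lemma already invoked in the proof of Proposition~\ref{prop:mlms}. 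Everything else amounts to citation chaining of Lemma~\ref{lem:excp}, Theorem~\ref{thm:ms}, Theorem~\ref{nu-stronglyAlmostPeriodic}, and the Euclidean converse of Section~\ref{sec:MDMS}.
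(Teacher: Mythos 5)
Your proposal is correct and follows essentially the same route as the paper, which states the theorem with no separate proof precisely because it is ``readily inferred'' from Proposition~\ref{lem:icasms}(ii) (discrete internal space, so that $\mathbf{1}_W\in C_c(H)$), Lemma~\ref{lem:excp}, Theorem~\ref{thm:ms} and its corollary, the Euclidean converse proposition, and Theorem~\ref{nu-stronglyAlmostPeriodic}(i); your citation chain matches this exactly. Your explicit treatment of modulation stability for the class of modulations of $\Lambda$ via Bohr lifts of the composite functions is also the intended argument (and, as a side note, your composite weight $w(x)=w_1(x)\cdot w_2(x+g_1(x))$ corrects the typographical slips in the displayed formulas of Proposition~\ref{prop:mlms}).
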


\smallskip

The class of ideal crystals is stable under a restricted class of modulations, which is sometimes called \textit{commensurate modulations}. A function on $G$ is called \textit{fully periodic} or \textit{crystallographic}, if its period group is a lattice in $G$.

\begin{proposition}\label{prop:icms}
Let $\Lambda=\Gamma+F$ be an ideal crystal, and let $g:G\to G$ be any function such that $g|_\Gamma$ is fully periodic. Then $\Lambda^g=\{\lambda+g(\lambda): \lambda\in \Lambda\}$ is an ideal crystal.
\end{proposition}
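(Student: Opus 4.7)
The plan is to extract a lattice $\Gamma' \subset \Gamma$ from the hypothesis such that $\Gamma' + \Lambda^g = \Lambda^g$, and then to show $\Lambda^g$ is locally finite by observing that $g$ takes only finitely many values on $\Lambda$. Together these facts imply the period group $P(\Lambda^g)$ is a lattice, so that $\Lambda^g$ is an ideal crystal by Definition~\ref{def:idealcrystal}.

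First I would let $\Gamma'$ denote the period group of $g|_\Gamma$, which is by hypothesis a lattice in $G$; being a subgroup of the lattice $\Gamma$ it is automatically discrete, so the hypothesis amounts to $\Gamma'$ being a finite-index subgroup of $\Gamma$. Reading the \emph{commensurate} hypothesis in the crystallographic sense, that is, requiring $g(x+\gamma') = g(x)$ for all $x \in G$ and $\gamma' \in \Gamma'$ (this is compatible with the given periodicity on $\Gamma$ and ensures the argument works on all cosets $\Gamma + f$), I would then verify $\Gamma' \subset P(\Lambda^g)$ directly. For $\gamma' \in \Gamma'$ and $\lambda = \gamma + f \in \Lambda$, I have $\lambda + \gamma' = (\gamma + \gamma') + f \in \Lambda$ since $\Gamma' \subset \Gamma \subset P(\Lambda)$, and $g(\lambda + \gamma') = g(\lambda)$, whence
\[
\gamma' + (\lambda + g(\lambda)) = (\lambda + \gamma') + g(\lambda + \gamma') \in \Lambda^g,
\]
so $\gamma' + \Lambda^g \subset \Lambda^g$, and the reverse inclusion follows by applying the same argument to $-\gamma'$. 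Hence $\Gamma' \subset P(\Lambda^g)$, and $P(\Lambda^g)$ is relatively dense.

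Next I would prove local finiteness of $\Lambda^g$. Since $g$ is $\Gamma'$-periodic and $\Lambda$ decomposes into at most $[\Gamma : \Gamma'] \cdot |F|$ cosets of $\Gamma'$, the image $g(\Lambda)$ is a finite subset of $G$. Consequently $\Lambda^g \subset \Lambda + g(\Lambda)$ is a finite union of translates of the ideal crystal $\Lambda$, and is therefore locally finite. It follows that $\Lambda^g - \Lambda^g \subset (\Lambda - \Lambda) + (g(\Lambda) - g(\Lambda))$ is also locally finite, and since $P(\Lambda^g) \subset \Lambda^g - \Lambda^g$ whenever $\Lambda^g$ is nonempty, the period group $P(\Lambda^g)$ is discrete. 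Combined with the relative denseness established above, $P(\Lambda^g)$ is a lattice, so $\Lambda^g$ is an ideal crystal by Definition~\ref{def:idealcrystal}.

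The main subtlety is the proper interpretation of \textbf{``$g|_\Gamma$ is fully periodic''}: as stated the hypothesis literally only constrains $g$ on $\Gamma$, yet to translate points $\lambda = \gamma + f$ with $f \notin \Gamma$ one needs $g$ to be $\Gamma'$-invariant at these points as well. The natural reading, in the crystallographic spirit of \emph{commensurate modulation}, is that $g$ itself is $\Gamma'$-periodic on $G$ for some finite-index sublattice $\Gamma' \subset \Gamma$; with this reading the proof reduces to the routine verifications sketched above, while without it the computation on non-trivial cosets of $\Gamma$ in $\Lambda$ cannot be carried out.
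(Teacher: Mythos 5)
Your proof is correct, and it packages the argument differently from the paper. The paper's proof produces an explicit decomposition: it takes $L=\{\gamma\in\Gamma: T_\gamma g=g\}$, shows $L$ has finite index in $\Gamma$, writes $\Gamma=L+E$ with $E$ finite, and exhibits $\Lambda^g=L+F_g$ with $F_g=\{e+f+g(e+f): e\in E,\, f\in F\}$, concluding via the characterization of ideal crystals as sets of the form (lattice $+$ finite set). You instead verify Definition~\ref{def:idealcrystal} directly: relative denseness of $P(\Lambda^g)$ from $\Gamma'\subset P(\Lambda^g)$, and discreteness from $P(\Lambda^g)\subset \Lambda^g-\Lambda^g\subset(\Lambda-\Lambda)+(g(\Lambda)-g(\Lambda))$, which is locally finite because $g(\Lambda)$ is finite. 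In effect you inline the ``$\Leftarrow$'' direction of that characterization lemma (whose proof uses exactly the sandwich $\Gamma\subset P\subset \Gamma+F-F$), so the two arguments rest on the same mechanism; what the paper's version buys is the explicit period lattice and motif of $\Lambda^g$, while yours avoids choosing coset representatives. Two remarks. First, your discussion of the hypothesis is well taken and applies equally to the paper's own proof: the step $g(\ell+e+f)=g(e+f)$ there also requires $\ell$ to be a period of $g$ at points $e+f\notin\Gamma$, which is why the paper defines $L$ via $T_\gamma g=g$ as functions on all of $G$ even though the stated hypothesis only constrains $g|_\Gamma$; your reading (some finite-index sublattice $\Gamma'\subset\Gamma$ consists of periods of $g$ on $G$) is the one under which both proofs go through. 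Second, you assert rather than prove that a sublattice of $\Gamma$ has finite index; this is standard (the paper spends two lines on it via $\Gamma=\Gamma'+(K\cap\Gamma)$), but since you use the index to bound $|g(\Lambda)|$ you should at least cite it.
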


\begin{proof}
Given $\Lambda=\Gamma+F$, let $L=\{\gamma\in \Gamma: T_\gamma g=g\}$ be the period group of $g$ in $\Gamma$. As $g|_\Gamma$ is fully periodic, $L\subset \Gamma$ is a lattice.  In particular, $L$ has finite index in $\Gamma$. To see the latter statement, write $L+K=G$ for some compact set $K$. We then have
\begin{displaymath}
\Gamma=\Gamma\cap (L+K) \subset  L+K\cap \Gamma\subset \Gamma \ .
\end{displaymath}
For the first inclusion, assume $\ell+k\in \Gamma$ for $\ell\in L\subset \Gamma$ and $k\in K$. Then  $k\in \Gamma-\ell\subset \Gamma-L\subset \Gamma-\Gamma=\Gamma$, and the inclusion follows.
As $K\cap \Gamma$ is a finite set, this shows that $L$ has finite index in $\Gamma$.
We can thus write $\Gamma=L+E$ for a lattice $L$ and a finite set $E$. Now define the finite set
\begin{displaymath}
F_g=\{e+f+ g(e+f): e\in E, f \in F\} \ .
\end{displaymath}
Then we have
\begin{displaymath}
\begin{split}
\Lambda^g&= \{\ell+e+f+g(\ell+e+f) : \ell\in L, e\in E, f\in F\} \\
&= \{\ell+e+f+g(e+f) : \ell\in L, e\in E, f\in F\} \\
&= L+F_g \ .
\end{split}
\end{displaymath}
This shows that $\Lambda^g$ is an ideal crystal.
\qed
\end{proof}

\begin{remark}
If $g|_\Gamma$ is fully periodic, then $g(\Gamma)$ is finite. The converse is true if $g|_\Gamma$ is almost periodic since for $g(\Gamma)$ finite, any $\varepsilon$-almost period is a period for sufficiently small $\varepsilon$ in that case. Sometimes, the case $g(\Gamma)$ finite is referred to as commensurate modulation.
\end{remark}

\begin{remark}
By Remark~\ref{rem:dsd}, we may canonically write $\Lambda^g=P(\Lambda^g)\oplus (D\cap \Lambda^g)$, where $D$ is a relatively compact fundamental domain of the lattice $P(\Lambda^g)$. The lattice $L$ used in the above proof might not coincide with $P(\Lambda^g)$. 
\end{remark}

\begin{remark}\label{rem:comm}
The previous result shows that any commensurate lattice modulation with trivial weight function is an ideal crystal. In this sense, ideal crystals naturally arise from lattices by commensurate modulation.  Conversely, in $G=\RR^d$ it is not too difficult to see that any ideal crystal is a commensurate lattice modulation. 
\end{remark}

\begin{appendix}

\section{Group valued almost periodic functions}\label{app:gvap}

We discuss almost periodicity for functions which take values in a topological abelian group.
This subsumes real or complex-valued almost periodic functions, see \cite{B55,C68,K04} for standard treatments. It also subsumes almost periodic functions with values in Euclidean space \cite{Zai}, see also \cite[Sec.~33.26-27]{HR}.%
\footnote{In that context, let us mention methods of sampling along almost periodic sequences \cite{BHL}, which complement dynamical systems techniques and may also be put into a group setting.}
Our results are natural extensions from the case of complex-valued functions. In view of our applications, we restrict analysis to uniformly continuous and bounded functions.

\subsection{Function space topology}

The topology on our function spaces is induced by the uniform structure of uniform convergence.   For background about topologies on function spaces, see e.g.~Chapters 6 and 7 in \cite{Kel} or Chapters II, III.3 and X.1 in \cite{Bou98}.

\medskip

Let $ G,H$ be LCAG, where $H$ is complete. Consider the set $\mathcal H$ of all maps from $G$ to $H$. For each zero neighborhood $W\subset H$ define
\begin{displaymath}
U_W=\{(h_1, h_2) \in \mathcal{H} \times \mathcal{H}: h_1(x) - h_2(x) \in W \ \ \mbox{for all $x \in G$}\}
\end{displaymath}
and consider  $\mathcal{U} = \{U_W: \text{$W$ is a zero neighborhood in $H$} \}$. The following facts are standard.

\begin{lemma}
\begin{itemize}
\item[(i)] $\mathcal{U}$ is a fundamental set of entourages for a uniformity on $\mathcal{H}$. 
\item[(ii)] $\mathcal{H}$ is complete with respect to the above uniformity.  
\item[(iii)] The subspaces $C(G,H)$ of continuous functions, $C_b(G,H)$ of continuous and bounded functions and $C_\mathsf{u}(G,H)$ of uniformly continuous and bounded functions are closed in $\mathcal{H}$.
\end{itemize}
\end{lemma}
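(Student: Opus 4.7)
The plan is to verify the three assertions by standard topological group arguments, treating them in order.

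For part (i), I would check the four axioms for a fundamental system of entourages on $\mathcal H$. Each $U_W$ contains the diagonal since $0\in W$. For intersections, $U_{W_1}\cap U_{W_2}\supset U_{W_1\cap W_2}$, and $W_1\cap W_2$ is again a zero neighborhood in $H$. Symmetry is handled by $U_W^{-1}=U_{-W}$ together with the fact that zero neighborhoods in the topological group $H$ can be chosen symmetric. Finally, the composition axiom $U_{W'}\circ U_{W'}\subset U_W$ reduces to the continuity of addition at $0$ in $H$, which supplies $W'$ with $W'+W'\subset W$. Each reduction is transparent.

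For part (ii), let $(h_\iota)_\iota$ be a Cauchy net in $\mathcal H$. For every zero neighborhood $W$ in $H$ there is $\iota_0$ such that $h_\iota(x)-h_{\iota'}(x)\in W$ for all $x\in G$ whenever $\iota,\iota'\geq\iota_0$. Fixing $x\in G$, the net $(h_\iota(x))_\iota$ is Cauchy in $H$ and hence converges to some value $h(x)$ by completeness of $H$, which defines a map $h\in\mathcal H$. To upgrade the pointwise limit to uniform convergence, given $W$ pick a closed zero neighborhood $W'\subset W$ (such exist in any topological group) and apply the Cauchy property to $W'$; passing to the limit in $\iota'$ while keeping $x$ and $\iota\geq\iota_0$ fixed gives $h_\iota(x)-h(x)\in\overline{W'}\subset W$ for all $x$, so $(h_\iota,h)\in U_W$. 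Thus $h_\iota\to h$ uniformly, proving completeness.

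For part (iii), I would treat the three subspaces by the usual three-$\varepsilon$ splitting. For $C(G,H)$, take $h=\lim_\iota h_\iota$ uniformly with $h_\iota$ continuous, fix $x_0\in G$ and a zero neighborhood $W$, and choose $W'$ with $W'+W'+W'\subset W$. Pick $\iota$ with $(h_\iota,h)\in U_{W'}$ and a neighborhood $V$ of $x_0$ with $h_\iota(V)-h_\iota(x_0)\subset W'$; then the decomposition
\[
h(x)-h(x_0)=\bigl(h(x)-h_\iota(x)\bigr)+\bigl(h_\iota(x)-h_\iota(x_0)\bigr)+\bigl(h_\iota(x_0)-h(x_0)\bigr)
\]
lies in $W'+W'+W'\subset W$ for all $x\in V$, giving continuity of $h$. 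The identical argument with $V$ replaced by an entourage of the uniformity of $G$ proves closedness of $C_\mathsf{u}(G,H)$. For boundedness (understood, as is natural in the LCA setting, as having relatively compact image), pick a zero neighborhood $W$ with $\overline{W}$ compact; if $(h_\iota,h)\in U_W$ then $h(G)\subset h_\iota(G)+\overline{W}\subset\overline{h_\iota(G)}+\overline{W}$, which is compact whenever $h_\iota\in C_b(G,H)$, so $h\in C_b(G,H)$.

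The main conceptual issue, and the one worth pausing on, is the closed-neighborhood step in the completeness argument: one must know that closed symmetric zero neighborhoods form a neighborhood base in $H$. This is a standard fact about topological groups, but it is the one place where purely Hausdorff or metric intuitions do not suffice; everything else is a routine adaptation of the scalar-valued proofs.
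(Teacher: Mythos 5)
Your verification is correct and is exactly the standard argument the paper has in mind: the paper itself gives no proof of this lemma, merely declaring the facts standard, and your checks of the entourage axioms, the closed-neighborhood trick for completeness, and the three-way splitting for closedness are the expected route. The only point worth flagging is that the paper never specifies what ``bounded'' means for $H$-valued maps; your reading (relatively compact image) is the natural one in the locally compact setting and is the one under which your closedness argument for $C_b(G,H)$ goes through.
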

Note that for $H=\mathbb C$ with its usual topology, the topology on the function space $\mathcal H$ induced by $\mathcal{U}$ coincides with the topology induced by the supremum norm.

\subsection{Almost periodic functions}

From now on, we restrict to uniformly continuous and bounded functions.  In that setting, almost periodicity can be described in three equivalent ways.

\medskip

For $f\in C_\mathsf{u}(G,H)$ consider its hull $G_f$, i.e., the closure of the translation orbit of $f$ in the topology of uniform convergence. We write
\begin{displaymath}
G_f = \overline{\{T_tf: t\in G\}} \subset C_\mathsf{u}(G,H) \ ,
\end{displaymath}
where we use the notation $(T_tf)(x)=f(-t+x)$ for function translation.
We say that $f$ is \textit{Bochner almost periodic}, if its hull $G_f$ is compact.

\medskip

We say $f$ is \textit{Bohr almost periodic}, if for every zero neighborhood $W \subset H$ the set
\begin{displaymath}
P_W(f) =\{t\in G :  (T_tf, f)\in U_W\}
\end{displaymath}
of $W$-almost periods of $f$ is relatively dense. Here $A\subset G$ is \textit{relatively dense} in $G$ if there is a compact set $K\subset G$ such that $A+K=G$.

\medskip

Almost periodicity can also be defined using the Bohr compactification $G_\mathsf{b}$ of $G$. This is the dual group of the dual group $(\widehat G)_d$ of $G$, equipped with the discrete topology. See \cite[Sec.~4.7]{Fol} for background. We denote by $i_\mathsf{b}:G\to G_\mathsf{b}$ the canonical injection map.

\begin{proposition}\label{prop:BochBohr}
Consider $f\in C_\mathsf{u}(G,H)$. Then the following are equivalent.
\begin{itemize}
\item[(i)] $f$ is Bochner almost periodic.
\item[(ii)] $f$ is Bohr almost periodic.
\item[(iii)] There exists $f_\mathsf{b}\in C(G_\mathsf{b}, H)$ such that $f=f_\mathsf{b}\circ i_\mathsf{b}$.
\end{itemize}
\end{proposition}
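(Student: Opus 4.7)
My plan is to close the cycle (i) $\Rightarrow$ (ii) $\Rightarrow$ (i) $\Rightarrow$ (iii) $\Rightarrow$ (i), with the non-trivial content sitting in (ii) $\Rightarrow$ (i) and (i) $\Rightarrow$ (iii). The equivalence (i) $\Leftrightarrow$ (ii) is the $H$-valued Bochner--Bohr theorem, and the arguments are direct translations of the classical ones, carried out in the uniformity $\mathcal{U}$ defined just above. For (i) $\Rightarrow$ (ii), given a symmetric zero neighborhood $W \subset H$, total boundedness of the compact hull $G_f$ yields $t_1,\ldots,t_n \in G$ such that every $T_t f$ lies within $U_W$ of some $T_{t_i} f$; translating by $-t_i$ shows $t - t_i \in P_W(f)$, so $G = \bigcup_i (P_W(f) + t_i)$ and $P_W(f)$ is relatively dense. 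For (ii) $\Rightarrow$ (i), since $C_\mathsf{u}(G,H)$ is complete, it suffices to show the orbit $\{T_t f : t \in G\}$ is totally bounded. Given a symmetric $W$, pick a symmetric $W_0$ with $W_0 + W_0 \subset W$, a zero neighborhood $V \subset G$ witnessing uniform continuity of $f$ at scale $W_0$, and a compact $K \subset G$ with $P_{W_0}(f) + K = G$. Cover $K$ by finitely many $k_i + V$; writing any $t \in G$ as $t = p + k$ with $p \in P_{W_0}(f)$ and $k \in k_i + V$, the almost-period bound gives $T_t f - T_k f \in U_{W_0}$ and uniform continuity gives $T_k f - T_{k_i} f \in U_{W_0}$, so $T_t f - T_{k_i} f \in U_{W_0 + W_0} \subset U_W$.

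The implication (iii) $\Rightarrow$ (i) is essentially formal. Joint continuity of translation on the compact group $G_\mathsf{b}$, combined with uniform continuity of $f_\mathsf{b}$ (automatic on the compact $G_\mathsf{b}$), makes the orbit map $\xi \mapsto T_\xi f_\mathsf{b}$ continuous from $G_\mathsf{b}$ into $C(G_\mathsf{b},H)$, with compact image. The restriction map $g \mapsto g \circ i_\mathsf{b}$ from $C(G_\mathsf{b},H)$ into $C_\mathsf{u}(G,H)$ is continuous, so the composition produces a compact subset of $C_\mathsf{u}(G,H)$ containing the orbit $\{T_t f : t \in G\}$; being closed, $G_f$ is therefore compact.

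The main step, and the main obstacle, is (i) $\Rightarrow$ (iii). The strategy is to endow $G_f$ with the structure of a compact abelian topological group such that $\pi : G \to G_f$, $t \mapsto T_{-t} f$, is a continuous homomorphism with dense image, and then invoke the universal property of the Bohr compactification. On the dense orbit, addition is defined by $T_{t_1} f \oplus T_{t_2} f := T_{t_1 + t_2} f$, which is well-defined modulo periods since a sum of periods is a period. The delicate point is to extend $\oplus$ continuously to all of $G_f \times G_f$: for nets $T_{s_\iota} f \to g_1$ and $T_{t_\iota} f \to g_2$, one must show that $T_{s_\iota + t_\iota} f$ converges uniformly to a limit depending only on $g_1$ and $g_2$. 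I expect this to reduce to the same $(W_0 + W_0) \subset W$ uniformity argument as in (ii) $\Rightarrow$ (i), together with equicontinuity of the orbit coming from uniform continuity of $f$; the group axioms then pass to $G_f$ by density and continuity. Once $G_f$ is a compact abelian topological group and $\pi$ is a continuous homomorphism with dense image, the universal property of $G_\mathsf{b}$ yields a unique continuous homomorphism $\tilde{\pi} : G_\mathsf{b} \to G_f$ with $\pi = \tilde{\pi} \circ i_\mathsf{b}$. Setting $f_\mathsf{b} := \mathrm{ev}_0 \circ \tilde{\pi}$, where $\mathrm{ev}_0 : G_f \to H$ is the continuous evaluation $g \mapsto g(0)$, we obtain $f_\mathsf{b} \in C(G_\mathsf{b}, H)$ with $f_\mathsf{b}(i_\mathsf{b}(t)) = (T_{-t} f)(0) = f(t)$, as required.
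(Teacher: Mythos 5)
Your proposal is correct and follows essentially the same route as the paper: (i)$\Leftrightarrow$(ii) is the classical Bochner--Bohr argument transplanted to the uniformity $\mathcal{U}$ (which the paper simply takes as established), and (i)$\Rightarrow$(iii) proceeds exactly as in the paper, via the compact group structure on $G_f$, the universal property of $G_\mathsf{b}$ applied to the orbit map, and composition with the evaluation at $0$. Your (iii)$\Rightarrow$(i) is marginally slicker --- you exhibit the orbit inside the continuous image of the compact $G_\mathsf{b}$ under $\xi\mapsto (T_\xi f_\mathsf{b})\circ i_\mathsf{b}$, whereas the paper counts a finite $U_V$-cover to get total boundedness --- but the content is the same.
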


A proof can be given by properly adapting the arguments from the case $H=\mathbb C$, which are given e.g.~in \cite{MoSt}. As arguments based on the Bohr compactification appear throughout the article, we write out the proof $(i) \Leftrightarrow (iii)$ for the reader's convenience, assuming that we have established $(i) \Leftrightarrow (ii)$.

\begin{proof}
\noindent  $(i) \Rightarrow (iii)$: Consider the map $Tf:G\to G_f$ given by $t\mapsto (Tf)(t)=T_tf$. It is a continuous group homomorphism. As
$G_f$ is a compact by assumption, we can use the universal property of the Bohr compactification to infer the existence of a continuous group homomorphism $(Tf)_\mathsf{b}: G_\mathsf{b}\to G_f$ satisfying $Tf=(Tf)_\mathsf{b}\circ i_\mathsf{b}$. Let us consider the evaluation map $\delta: G_f\to H$, which is defined by $\delta(g)=g(0)$ for $g\in G_f$. We then have $f=\delta\circ Tf$. The situation is summarized in the following commutative diagram.
\begin{center}
\begin{tikzcd}
G \arrow{r}{f} \arrow[swap]{d}{i_\mathsf{b}} \arrow{rd}{Tf}& H \\
G_\mathsf{b} \arrow[swap]{r}{(Tf)_\mathsf{b}} & G_f \arrow[swap]{u}{\delta}
\end{tikzcd}
\end{center}
Now define $f_\mathsf{b}: G_\mathsf{b}\to H$ by $f_\mathsf{b}=\delta\circ (Tf)_\mathsf{b}$. Then $f=f_\mathsf{b}\circ i_\mathsf{b}$.

\smallskip

\noindent  $(iii) \Rightarrow  (i)$:  Consider $f_\mathsf{b}\in C(G_\mathsf{b},H)$ such that $f=f_\mathsf{b}\circ i_\mathsf{b}$. Consider any zero neighborhood $W\subset H$ and choose some zero neighborhood $V\subset H$ such that $\overline{V}\subset W$. As $G_\mathsf{b}$ is a compact group, the function $f_\mathsf{b}\in C(G_\mathsf{b},H)$ is Bohr almost periodic and hence also Bochner almost periodic by $(ii) \Rightarrow (i)$. For some finite set $F_\mathsf{b}\subset G_\mathsf{b}$ we thus have
\begin{displaymath}
\{T_{t_\mathsf{b}} f_\mathsf{b}: t_\mathsf{b}\in G_\mathsf{b}\} \subset \bigcup_{t_\mathsf{b}\in F_\mathsf{b}} U_V(T_{t_\mathsf{b}}f_\mathsf{b})\,.
\end{displaymath}
Now observe $U_V(T_{t_\mathsf{b}} f_\mathsf{b})\circ i_\mathsf{b} \subset U_V(T_{t_\mathsf{b}}f_\mathsf{b}\circ i_\mathsf{b})$ and  $T_tf=(T_{i_\mathsf{b}(t)}f_\mathsf{b})\circ i_\mathsf{b}$. Hence the above implies
\begin{displaymath}
\begin{split}
\{T_tf: t\in G\}&=\{T_{i_\mathsf{b}(t)}f_\mathsf{b}: t\in G\} \circ i_\mathsf{b}\subset \bigcup_{t_\mathsf{b}\in F_\mathsf{b}} U_V(T_{t_\mathsf{b}}f_\mathsf{b}) \circ i_\mathsf{b} \\
&\subset \bigcup_{t\in F} U_W(T_{i_\mathsf{b}(t)}f_\mathsf{b}) \circ i_\mathsf{b} \subset \bigcup_{t\in F} U_W(T_{t}f)\,,
\end{split}
\end{displaymath}
where $F\subset G$ is some finite set. For the second inclusion we used that $i_\mathsf{b}(G)\subset G_\mathsf{b}$ is dense in $G_\mathsf{b}$. As $W\subset H$ was an arbitrary zero neighborhood,  this shows total boundedness of the hull. As $C_\mathsf{u}(G,H)$ is complete, this implies compactness of the hull \cite[Ch.~6.32]{Kel}.
\end{proof}

\subsection{Almost periodic functions on subgroups}\label{app:apsg}

Let $L$ be a closed subgroup of an LCAG $G$, and let $H$ be an LCAG. We discuss the relation between almost periodic functions $G\to H$ and $L\to H$. It is advantageous to perform the analysis on the Bohr compactifications $L_\mathsf{b}$ and $G_\mathsf{b}$, since we can then work with continuous functions on compact groups. The natural continuous injection maps lead to the following commutative diagram.

\begin{center}
\begin{tikzcd}
L \arrow{r}{i} \arrow[swap]{d}{i_L} & G \arrow{d}{i_G}\\
L_\mathsf{b} \arrow{r}{i_\mathsf{b}} &G_\mathsf{b}
\end{tikzcd}
\end{center}

\begin{lemma}\label{rem:com}
The above continuous group homomorphism $i_\mathsf{b}$ is uniquely defined.
It is an embedding of $L_\mathsf{b}$ into $G_\mathsf{b}$.
\end{lemma}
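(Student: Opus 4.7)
The plan is to split the lemma into three steps: existence and uniqueness of $i_\mathsf{b}$, injectivity of $i_\mathsf{b}$, and the topological embedding property. For the first step, I would observe that $i_G \circ i : L \to G_\mathsf{b}$ is a continuous group homomorphism from $L$ into the compact group $G_\mathsf{b}$. The universal property of the Bohr compactification $L_\mathsf{b}$ then yields a unique continuous group homomorphism $i_\mathsf{b}: L_\mathsf{b} \to G_\mathsf{b}$ satisfying $i_\mathsf{b} \circ i_L = i_G \circ i$; this is exactly the square that appears in the diagram preceding the lemma, and it gives both existence and the uniqueness claim.

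For injectivity I would invoke Pontryagin duality. By construction, $G_\mathsf{b}$ is the dual of the discrete group $(\widehat{G})_d$ and $L_\mathsf{b}$ is the dual of $(\widehat{L})_d$, and under this identification $i_\mathsf{b}$ is the Pontryagin dual of the restriction homomorphism $\widehat{G} \to \widehat{L}$, $\chi \mapsto \chi|_L$, viewed as a morphism of discrete abelian groups. The key input is the classical extension theorem for characters on closed subgroups of LCAGs, which asserts that every continuous character of $L$ extends to a continuous character of $G$; equivalently, the restriction map $\widehat{G} \to \widehat{L}$ is surjective. Dualizing a surjective homomorphism of discrete abelian groups produces an injective homomorphism of their compact dual groups, whence $i_\mathsf{b}$ is injective.

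The embedding property then follows from a standard topological fact: a continuous injective map from a compact space into a Hausdorff space is automatically a closed topological embedding. Since $L_\mathsf{b}$ is compact and $G_\mathsf{b}$ is Hausdorff, this completes the argument.

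The main obstacle is the character extension step on which injectivity rests. Although standard, it is not entirely elementary, as it depends on the divisibility of the circle group together with Pontryagin duality for LCAGs; I would simply cite it from a standard reference such as Reiter--Stegeman or Hewitt--Ross rather than reprove it. An alternative, more self-contained route would be to argue directly that every element of $L_\mathsf{b}$ is determined by its action on the restrictions to $L$ of characters of $G$, but this ultimately amounts to the same extension property in disguise.
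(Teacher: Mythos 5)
Your proposal is correct and follows essentially the same route as the paper: both arguments identify $i_\mathsf{b}$ with the Pontryagin dual of the restriction map $\widehat{G}\to\widehat{L}$ on the discretized dual groups, and both derive injectivity from the surjectivity of that restriction map (the character extension theorem, cited from Reiter--Stegeman in the paper), dualized via Pontryagin duality. The only cosmetic differences are that you obtain existence and uniqueness from the universal property of $L_\mathsf{b}$ rather than by dualizing the commutative diagram directly, and that you spell out the final compact-to-Hausdorff embedding step which the paper leaves implicit.
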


\begin{proof}
Consider the following commutative diagram.
\begin{center}
\begin{tikzcd}
\widehat{L} &\arrow[swap]{l}{\widehat{i}}   \widehat{G} \\
({\widehat L})_d \arrow{u}{id}  & \arrow[swap]{u}{id} \arrow[swap]{l}{{\widehat i}_d}({\widehat G})_d
\end{tikzcd}
\end{center}
Here  $\widehat{i}:\widehat G\to\widehat L$ denotes the character restriction map, and $({\widehat L})_d$ resp.~$({\widehat G})_d$ denote the groups $\widehat L$ resp.~$\widehat G$ equipped with the discrete topology.
As $L_\mathsf{b}=\widehat{(\widehat L)_d}$ and   $G_\mathsf{b}=\widehat{(\widehat G)_d}$, dualizing this diagram yields the first part of the claim.
We show that $i_\mathsf{b}=\widehat{\widehat{i}_d}$ is one-to-one: Note first that the character restriction map is onto \cite[Thm.~4.2.14]{Rei2}, which follows from Pontryagin duality $\widehat L \simeq \widehat G/ L_0$, where $L_0\subset \widehat G$ is the annihilator of $L$, compare \cite[Thm.~4.2.21]{Rei2}.
Then $\widehat{i}_d:(\widehat G)_d\to(\widehat L)_d$ is a continuous onto group homomorphism. In particular, $\widehat{i}_d$ has dense range. Now we get by Pontryagin duality that $i_\mathsf{b}=\widehat{\widehat{i}_d}$ is one-to-one.
\qed
\end{proof}

Using Proposition~\ref{prop:BochBohr}, we can derive the following two results.

\begin{lemma} \label{almost-periodic-on-G-is-on-L}
Let $f\in C_\mathsf{u}(G,H)$ be an almost periodic function and let $L$ be a closed subgroup of  $G$. Consider the restriction $f_L$ of $f$ to $L$. Then $f_L\in C_\mathsf{u}(L,H)$ is almost periodic on $L$.
\end{lemma}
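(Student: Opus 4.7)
My plan is to prove the statement using the Bohr compactification characterization of almost periodicity from Proposition~\ref{prop:BochBohr}, in combination with the diagram and injectivity statement of Lemma~\ref{rem:com}. This is the cleanest route because it reduces everything to continuous functions on compact groups, where no analytic estimates are needed.

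First, I would observe that the restriction $f_L$ lies in $C_\mathsf{u}(L,H)$. Indeed, boundedness is obvious, and uniform continuity on $L$ follows from uniform continuity of $f$ on $G$ together with the fact that the uniformity on the subgroup $L$ is induced from that of $G$ (so every entourage of $L$ arises from one of $G$). No group structure is needed for this step, only that $L\subset G$ carries the subspace uniformity.

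Next, since $f\in C_\mathsf{u}(G,H)$ is almost periodic, Proposition~\ref{prop:BochBohr}(iii) yields $f_\mathsf{b}\in C(G_\mathsf{b},H)$ with $f=f_\mathsf{b}\circ i_G$. By Lemma~\ref{rem:com} we have the commutative square
\[
i_\mathsf{b}\circ i_L \;=\; i_G\circ i,
\]
where $i:L\hookrightarrow G$ is the inclusion and $i_\mathsf{b}:L_\mathsf{b}\to G_\mathsf{b}$ is a continuous group homomorphism. Define
\[
(f_L)_\mathsf{b}\;:=\;f_\mathsf{b}\circ i_\mathsf{b}\;\in\;C(L_\mathsf{b},H),
\]
which is continuous as a composition of continuous maps. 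For any $\ell\in L$ we then compute
\[
(f_L)_\mathsf{b}\circ i_L(\ell)\;=\;f_\mathsf{b}\circ i_\mathsf{b}\circ i_L(\ell)\;=\;f_\mathsf{b}\circ i_G\circ i(\ell)\;=\;f(\ell)\;=\;f_L(\ell).
\]
Thus $f_L=(f_L)_\mathsf{b}\circ i_L$ admits a continuous extension to $L_\mathsf{b}$, so by Proposition~\ref{prop:BochBohr}(iii)$\Rightarrow$(i) the function $f_L$ is almost periodic on $L$.

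There is no substantive obstacle here; the only mild subtlety is making sure that $i_\mathsf{b}$ from Lemma~\ref{rem:com} is the map that naturally fits between $L_\mathsf{b}$ and $G_\mathsf{b}$, which is precisely the content of that lemma (dualization of the surjective character restriction map). Injectivity of $i_\mathsf{b}$ is not needed for the argument, only commutativity of the diagram. An alternative, more hands-on, proof would use the Bohr definition directly: given a zero neighborhood $W\subset H$, the set $P_W(f)\subset G$ is relatively dense in $G$, but a priori it need not meet $L$ in a relatively dense subset, which is exactly why routing through the Bohr compactification is preferable.
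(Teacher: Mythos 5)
Your proposal is correct and follows essentially the same route as the paper's own proof: both factor $f$ through $G_\mathsf{b}$ via Proposition~\ref{prop:BochBohr}, pull back along the map $i_\mathsf{b}:L_\mathsf{b}\to G_\mathsf{b}$ from the commutative square of Lemma~\ref{rem:com}, and verify $f_L=(f_\mathsf{b}\circ i_\mathsf{b})\circ i_L$. Your added remarks (that injectivity of $i_\mathsf{b}$ is not needed, and why the direct Bohr-almost-period argument is awkward) are accurate but do not change the argument.
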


\begin{proof}
The almost periodic $f\in C_\mathsf{u}(G,H)$ can be written as $f=F\circ i_G$ for a unique continuous $F:G_\mathsf{b}\to H$. Then $F_L: L_\mathsf{b}\to H$, defined by $F_L=F\circ i_\mathsf{b}$, is continuous. Then $F_L\circ i_L: L\to H$ is almost periodic on $L$ and satisfies $F_L \circ i_L(l)= F\circ i_\mathsf{b}\circ i_L(l)= F\circ i_G\circ i (l)=f(l)$. We have shown $F_L\circ i_L=f_L$, with $f_L$ the restriction of $f$ to $L$.
\qed
\end{proof}

\begin{lemma}\label{almost-periodic-on-L-is-on-G}
Let $L$ be a closed subgroup of $G$. Consider $H=\mathbb R^d$ and let $f_L\in C_\mathsf{u}(L,H)$ be an almost periodic function. Then there exists an almost periodic extension $f\in C_\mathsf{u}(G,H)$ of $f_L$ to $G$.
\end{lemma}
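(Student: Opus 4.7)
The plan is to lift the extension problem from $G$ to the Bohr compactification $G_\mathsf{b}$, where it becomes a question of extending a continuous function across a closed subspace of a compact Hausdorff space.

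By Proposition~\ref{prop:BochBohr}, almost periodicity of $f_L\in C_\mathsf{u}(L,H)$ yields a continuous function $F_L\colon L_\mathsf{b}\to H$ with $f_L=F_L\circ i_L$. By Lemma~\ref{rem:com}, the canonical map $i_\mathsf{b}\colon L_\mathsf{b}\to G_\mathsf{b}$ is an injective continuous group homomorphism. Since $L_\mathsf{b}$ is compact and $G_\mathsf{b}$ is Hausdorff, $i_\mathsf{b}$ is a topological embedding and its image $i_\mathsf{b}(L_\mathsf{b})$ is a closed subset of $G_\mathsf{b}$. Define the continuous function $\widetilde F\colon i_\mathsf{b}(L_\mathsf{b})\to H$ by $\widetilde F(i_\mathsf{b}(x))=F_L(x)$.

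The crux is to extend $\widetilde F$ to a continuous function on all of $G_\mathsf{b}$. Here the assumption $H=\RR^d$ enters decisively: the compact Hausdorff space $G_\mathsf{b}$ is normal, and Tietze's extension theorem, applied coordinatewise, produces a continuous function $F\colon G_\mathsf{b}\to\RR^d$ with $F\circ i_\mathsf{b}=F_L$. Since $F_L$ is bounded (as a continuous function on the compact space $L_\mathsf{b}$), the Tietze extension may be taken bounded as well.

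Now set $f:=F\circ i_G\colon G\to H$. The relation $i_\mathsf{b}\circ i_L=i_G\circ i$ from Section~\ref{app:apsg} gives, for $\ell\in L$,
\[
f(\ell)=F(i_G(i(\ell)))=F(i_\mathsf{b}(i_L(\ell)))=F_L(i_L(\ell))=f_L(\ell),
\]
so $f$ extends $f_L$. Continuity and boundedness of $F$, combined with the fact that the canonical homomorphism $i_G\colon G\to G_\mathsf{b}$ is uniformly continuous, show that $f\in C_\mathsf{u}(G,H)$. Finally, the representation $f=F\circ i_G$ with $F\in C(G_\mathsf{b},H)$ is exactly condition $(iii)$ of Proposition~\ref{prop:BochBohr}, so $f$ is almost periodic.

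The only genuine subtlety is the use of Tietze's theorem for $\RR^d$-valued functions on a closed subset of a compact Hausdorff space; everything else is a bookkeeping chase through the commutative diagram of Lemma~\ref{rem:com}. It is precisely this step that explains why the statement is restricted to $H=\RR^d$: for a general target LCAG $H$, no such extension result is available, and indeed not every almost periodic function on a subgroup extends almost periodically in that generality.
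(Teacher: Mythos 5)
Your proposal is correct and follows essentially the same route as the paper: lift $f_L$ to a continuous $F_L$ on $L_\mathsf{b}$, use Lemma~\ref{rem:com} to identify $L_\mathsf{b}$ with the closed subset $i_\mathsf{b}(L_\mathsf{b})\subset G_\mathsf{b}$, extend componentwise by Tietze, and pull back along $i_G$ to obtain an almost periodic extension via Proposition~\ref{prop:BochBohr}(iii). Your remarks on why compactness of $L_\mathsf{b}$ gives closedness of the image and on the role of $H=\RR^d$ make explicit what the paper leaves implicit, but the argument is the same.
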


\begin{remark}
The above extension might not be unique.
\end{remark}

\begin{proof}
The almost periodic function $f_L\in C_\mathsf{u}(L,H)$ can be written as $f_L=F_L\circ i_L$ for a unique continuous function $F_L:L_\mathsf{b}\to H$. Consider $A=i_\mathsf{b}(L_\mathsf{b})\subset G_\mathsf{b}$. We can define a continuous function $F_A: A\to H$ by $F_A(g)=F_L(l)$, where $l\in L_\mathsf{b}$ is uniquely determined by $i_\mathsf{b}(l)=g$, as $i_\mathsf{b}: L_\mathsf{b}\to A$ is a homeomorphism, compare Lemma~\ref{rem:com}. As $A$ is closed, we can invoke Tietze's extension theorem \cite[Thm.~A.8.3]{DE} on each component of $F_A$  to infer that $F_A$ admits a continuous extension $F: G_\mathsf{b}\to H$. Hence $f\in C_\mathsf{u}(G,H)$ defined by $f=F\circ i_G$ is an almost periodic extension of $f_L$ to $G$.
\qed
\end{proof}

\end{appendix}

\subsection*{acknowledgments}
J.-Y. Lee would like to thank to Hyeong-Chai Jeong for motivating this work. J.-Y. Lee was supported by NRF grant No. 2019R1I1A3A01060365.
N.~Strungaru was supported by NSERC with grants 03762-2014 and 2020-00038, and he is grateful for the support. We thank the referees for very detailed suggestions for improvement.

\end{document}